\documentclass[12pt]{amsart}

\usepackage{amsfonts}
\usepackage{amssymb}
\usepackage{graphicx}
\usepackage{amsmath,mathtools}
\usepackage{latexsym}
\usepackage{amscd}
\usepackage{xypic}
\usepackage{mathrsfs}
\usepackage{enumitem} 
\usepackage{braket}
\usepackage[margin=3cm]{geometry}
\usepackage{amsthm}
\usepackage{accents}
\usepackage{xcolor}

\usepackage{hyperref}

\setlist{itemsep=3pt}

\pagestyle{plain}

\allowdisplaybreaks


\newtheorem{prop}{Proposition}
\newtheorem{theo}[prop]{Theorem}
\newtheorem*{theo*}{Theorem}
\newtheorem{lemm}[prop]{Lemma} 
\newtheorem*{lemm*}{Lemma}
\newtheorem{coro}[prop]{Corollary} 

\theoremstyle{definition}

\newtheorem{rema}[prop]{Remark}
\newtheorem{defi}[prop]{Definition}
\newtheorem*{defi*}{Definition}

\numberwithin{equation}{section}

\numberwithin{prop}{section}

\newcommand{\p}{\partial}

\newcommand{\BB}{\mathbb{B}}

\newcommand{\NN}{\mathbb{N}}

\newcommand{\RR}{\mathbb{R}} 


\newcommand{\cA}{\mathcal A}

\renewcommand{\cH}{\mathcal H}

\newcommand{\cS}{\mathcal S}



\DeclareMathOperator{\Ric}{Ric}

\DeclareMathOperator{\Vol}{Vol}
\DeclareMathOperator{\dive}{div}

\newcommand{\sff}{\mathrm{I\!I}}

\usepackage{graphicx}
\allowdisplaybreaks

\newenvironment{nouppercase}{%
  \renewcommand{\uppercasenonmath}[1]{}}{}

\title{Rigidity of Complete Free Boundary Minimal Hypersurfaces \\ in Convex NNSC Manifolds}

\author{Yujie Wu}
\address{Department of Mathematics, Bldg.\ 380, Stanford University, Stanford, CA 94305, USA}
\email{yujiewu@stanford.edu}

\begin{document}
\begin{nouppercase}
	\maketitle
\end{nouppercase}

\begin{abstract}
We prove that in the unit ball of $\RR^4$, there is no complete two-sided stable free boundary immersion. The result follows from a rigidity theorem of complete free boundary minimal hypersurfaces in complete 4-manifolds with non-negative intermediate Ricci curvature, convex boundary and weakly bounded geometry. The method uses warped $\theta$-bubble, a generalization of capillary surfaces.
\end{abstract}

\section{Introduction}
Recall a complete two-sided stable  minimal (free boundary) immersion $M^{n-1}\hookrightarrow X^n$ satisfies the following stability inequality, for any compactly supported test function $\phi$,
\begin{equation}\label{stabIneq}
	\int_M |\nabla \phi|^2 -(\Ric_X(\nu_M,\nu_M)+|\sff_M|^2)\phi^2-\int_{\p M} \sff_{\p X}(\nu_M,\nu_M)\phi^2\geq 0,
\end{equation}
for a choice of unit normal $\nu_M$ of $M\hookrightarrow X$, $\Ric_X$ the Ricci curvature of $X$ and $\sff$ the corresponding second fundamental forms. The positivity of the curvature tensor of the ambient manifold $X^n$ has been exploited to obtain rigidity or non-existence results of   stable minimal (free boundary) hypersurfaces.
When $X^n$ has non-negative Ricci and M is closed, then $M$ must be totally geodesic (\cite{Simons1968MinimalVar}); when $X^n$ has positive scalar curvature (PSC) and M is closed, then  $M$ must also have PSC (\cite{SchoenYau1979Incompressible},\cite{schoen1979structure}). See \cite{chodosh2024complete} for a more complete review of the literature.

In the case that $M$ or $X$ is non-compact, results have been obtained for surfaces in a three manifold. 
If the scalar curvature has $R_X \geq 0$ and $\p X =\emptyset$, then $M$ with the induced metric is either conformal to a plane or a cylinder, and the later case implies that $M$ is totally geodesic, intrinsically flat, $R_X \rvert_M=0$ and $ \Ric_X(\nu_M,\nu_M)=0$ along $M$ (\cite{FischSchoe1980-StructureCompleteStable}). Furthermore, if $R_X \geq 1$, then $M$ must be compact (\cite{SchoenYauBCM},\cite{GroLawPSCD}), and also admit a metric of PSC. The idea that the positivity of scalar curvature can be ``inherited'' has seen many fruitful applications (\cite{SchYauPMT}, \cite{chodoliSoapBubb}).

In the case that $\p X \neq \emptyset$, Franz (Proposition 3.2.5 in \cite{franz2022contributions}) obtained results for manifolds with ``weakly positive geometry''. Denote $H_{\p X}$ to be the mean curvature. If we assume $R_X \geq R_0> 0$, $H_{\p X} \geq 0$ and $\p X$ has no minimal components, or assume that $R_X \geq 0$ and $H_{\p X}\geq H_0>0$, then every complete two-sided stable free boundary minimal surface $M$, must be compact with intrinsic $\text{diam}(M)\leq C(H_0,R_0)$, and is diffeomorphic to a disc. The result allows the author to obtain compactness results for compact embedded finite index minimal surfaces in a compact 3-manifold with weakly positive geometry, leading to a uniform bound of area, total curvature, genus and boundary components (Theorem 3.2.1 in \cite{franz2022contributions}).

In higher dimensions, to obtain analogous rigidity or non-existence results, the assumption of $\Ric >0$ or $R_g \geq 1 $ needs to be strengthened (see examples in \cite{chodosh2024complete}). The authors in \cite{chodosh2024complete} combined PSC and non-negative intermediate Ricci curvature to obtain rigidity results for stable minimal hypersurfaces in a 4-manifold. The free boundary case using the same technique of PSC is obtained in \cite{wu2023free}.
In this paper, our main result shows that instead of assuming positive scalar curvature of $X$, we can assume positive mean curvature of $\p X$.

\begin{theo}\label{rigidity}
	Consider a 4-manifold $(X^4,\p X)$ with weakly bounded geometry, assume  $\Ric^X_2\geq 0, \sff_{\p X}\geq 0$ and $H_{\p X}\geq H_0>0$.
	If $(M^3,\p M)\hookrightarrow (X^4,\p X)$ is a complete two-sided stable free boundary minimal immersion, then $M$ is totally geodesic and $\Ric_X(\nu_M,\nu_M)=0$ along $M$, $\sff_{\p X}(\nu_M,\nu_M)=0$ along $\p M$.
\end{theo}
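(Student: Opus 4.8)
The plan is to argue by contradiction using a \emph{warped $\theta$-bubble} — a capillary-type hypersurface inside $M$ whose prescribed (variable) contact angle on $\p M$ is tuned to the geometry of $\p X$ — constructed at arbitrarily large scales, and then to extract a scale-independent Gauss--Bonnet bound that forces all curvature terms in \eqref{stabIneq} to vanish. First I would feed the Gauss equation for the minimal immersion, $\scal_M=\scal_X-2\Ric_X(\nu_M,\nu_M)-|\sff_M|^2$, into \eqref{stabIneq} to obtain, for all compactly supported $\phi$,
\[
\int_M \Big(|\nabla\phi|^2+\tfrac12\,\scal_M\,\phi^2\Big)\;\ge\;\int_M \Big(\tfrac12\,\scal_X+\tfrac12|\sff_M|^2\Big)\phi^2\;+\;\int_{\p M}\sff_{\p X}(\nu_M,\nu_M)\,\phi^2 .
\]
Here $\Ric^X_2\ge 0$ forces $\Ric^X\ge 0$, hence $\scal_X\ge 0$, and $\sff_{\p X}\ge 0$, so the right-hand side is nonnegative; moreover, exactly as in the closed $4$-manifold case of \cite{chodosh2024complete} and its free-boundary analogue \cite{wu2023free}, the full strength of $\Ric^X_2\ge 0$ together with $H_M=0$ is what will control the error generated by the later reduction from $M^3$ to a surface $\Sigma^2$. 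The free-boundary condition also gives $\sff^M_{\p M}=\sff_{\p X}|_{T\p M}$, hence $H^M_{\p M}=H_{\p X}-\sff_{\p X}(\nu_M,\nu_M)$, which is the channel through which $H_{\p X}\ge H_0>0$ will enter the boundary bookkeeping.

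\textbf{The warped $\theta$-bubble.} Fix $p\in M$. For large $R$ I would take a smooth proper function $\rho>0$ on $M$ built from a mollification (using weakly bounded geometry) of $d_M(\cdot,p)$, designed so that $\partial_r\log\rho$ is large on an annulus $\{R\lesssim d_M(\cdot,p)\lesssim 2R\}$, a prescribing function $h$ with $|h|$ just below $2\,\partial_r\log\rho$, and a contact-angle function $\theta$ on $\p M$ chosen compatibly with $\rho$ and $\sff_{\p X}$. One then minimizes the capillary-type energy
\[
\mathcal A_\theta(\Omega)=\int_{\p^\ast\Omega\cap\operatorname{int}M}\rho\;-\;\int_{\p^\ast\Omega\cap\p M}\rho\,\cos\theta\;-\;\int_{\Omega}\rho\,h
\]
over Caccioppoli sets $\Omega$ interpolating between two nested regions $\Omega_0\subset\Omega_1$ that are roughly sub- and super-level sets of $d_M(\cdot,p)$. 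The positivity $H_{\p X}\ge H_0>0$ (with $\sff_{\p X}\ge 0$) supplies the barriers on $\p X$ guaranteeing that a minimizer $\Sigma=\p^\ast\Omega$ exists, is compact, and is contained in the prescribed annular region, while weakly bounded geometry supplies interior and boundary regularity. The Euler--Lagrange equations are $H_\Sigma=h+\langle\nabla\log\rho,\nu_\Sigma\rangle$ in the interior and the prescribed contact angle $\theta$ between $\Sigma$ and $\p M$ along $\p\Sigma\subset\p M$, and the accompanying second-variation inequality plays the role of stability for $\Sigma$.

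\textbf{Gauss--Bonnet and rigidity.} Combining the second variation of $\mathcal A_\theta$ at $\Sigma$ with the rearranged stability inequality above (tested with functions adapted to $\Sigma$ and to $\rho$) bounds $\int_\Sigma K_\Sigma=\tfrac12\int_\Sigma \scal_\Sigma$ from above by $\log\rho$-terms only; and along $\p\Sigma$ the contact-angle relation expresses the geodesic curvature $\kappa_g^\Sigma(\p\Sigma)$ through the geodesic curvature of $\p\Sigma$ in $\p X$, the angle $\theta$, and $\sff_{\p X}$, so that the choice of $\theta$ converts $\int_{\p\Sigma}\kappa_g^\Sigma$ into $\int_{\p\Sigma}H_{\p X}$ up to controlled error. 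Inserting these into $\int_\Sigma K_\Sigma+\int_{\p\Sigma}\kappa_g^\Sigma=2\pi\chi(\Sigma)$ forces $\chi(\Sigma)>0$, so each component of $\Sigma$ is a disk, and simultaneously yields an upper bound on $\diam_\Sigma$ (equivalently on $\length(\p\Sigma)$) depending only on $H_0$ and the geometry bounds, \emph{not} on $R$. Since $\Sigma$ must be homologically nontrivial in the annulus $\{R\lesssim d_M(\cdot,p)\lesssim 2R\}$, letting $R\to\infty$ is incompatible with $M$ being complete and noncompact; if instead $M$ is compact one runs the construction with $R$ just past $\diam_M M$ and the bubble degenerates. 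Either way, the chain of inequalities must be an equality, and tracing equality back through the three steps yields $|\sff_M|\equiv 0$ on $M$, $\Ric_X(\nu_M,\nu_M)\equiv 0$ along $M$, and $\sff_{\p X}(\nu_M,\nu_M)\equiv 0$ along $\p M$.

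\textbf{Main obstacle.} The delicate point — and the reason for using a genuinely prescribed, nonconstant contact angle $\theta$ rather than an ordinary $\mu$-bubble or a constant-angle capillary surface — is the \emph{simultaneous} choice of $\rho$, $h$ and $\theta$: barriers on $\p X$ must exist using only $H_{\p X}\ge H_0$ and $\sff_{\p X}\ge 0$ (no lower bound on $\scal_X$ is available and, notably, no positive lower bound on $H^M_{\p M}$ either); the minimizer must stay in the annulus; and, after merging the $\Sigma$-stability with the $M$-stability, the boundary term in Gauss--Bonnet must remain strictly positive while the interior $\log\rho$-errors get absorbed. Establishing existence and boundary regularity of the free-boundary minimizer at the contact set, and controlling the behaviour of the cutoff $\rho$ near $\p M$ (where it need not be constant), are the principal technical hurdles, and are exactly where the weakly bounded geometry hypothesis is used.
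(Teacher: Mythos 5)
Your overall framework is right in spirit---the paper does use warped $\theta$-bubbles with a nonconstant contact angle $\theta$ prescribed so that $H_{\p X}\ge H_0$ supplies the boundary barriers, does reduce from $\Sigma^2$ to Gauss--Bonnet to get that each component of $\Sigma$ is a disk with $|\p\Sigma|$ and $d_\Sigma(\cdot,\p\Sigma)$ bounded independently of the scale, and does use $\Ric^X_2\ge 0$ to control the interior terms in passing from $M^3$ to $\Sigma^2$. But there are two genuine gaps in how you propose to close the argument.

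First, your construction of a $\theta$-bubble in an annulus around $p$ presupposes that you already know $\p M$ reaches out to that annulus so that a contact angle can be prescribed there. This is not automatic: $\p M$ could a priori have compact components, or live entirely in a parabolic end far from the annulus you care about. The paper spends Section 5 and Lemma~\ref{connBound} settling exactly this: it proves that (under $\Ric^X_2\ge 0$, $\sff^{\p X}_2\ge 0$, $\Vol(M)=\infty$) every component of $\p M$ is non-compact, that there is at most one non-parabolic end, that every component of $\p M$ has an end in that non-parabolic end, and then---after passing to the universal cover---that $\p M$ is connected. Without this input your prescribed-angle minimization problem does not have the advertised form, and the claim that the bubble separates two level sets of the distance function is unfounded. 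Your proposal makes no mention of parabolicity, harmonic barrier functions, or the topology of $\p M$; this machinery is not optional here.

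Second, your endgame does not actually produce a contradiction. You argue that having a small $\theta$-bubble $\Sigma$ with scale-independent bounds at every radius $R$ is ``incompatible with $M$ being complete and noncompact.'' It is not: such bubbles \emph{do} exist in the rigid case (e.g.\ a flat totally geodesic $M$), and their existence at every scale is consistent with completeness and noncompactness. What the bounded-diameter bubbles really buy you is that the chunks $M_k$ of the non-parabolic end sandwiched between consecutive bubbles have uniformly bounded diameter, hence (via Lemma~\ref{sffBound} and Lemma~\ref{volWBG}, i.e.\ weakly bounded geometry) uniformly bounded volume, hence the non-parabolic end has at most linear volume growth. The paper then constructs a test function $\phi=\varphi_k v^l_k$, a Lipschitz cutoff that decays linearly across the bubble chunks and is multiplied by harmonic functions $v^l_k$ on the parabolic pieces with small Dirichlet energy, and plugs this into the free-boundary stability inequality \eqref{stabIneq}. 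The Dirichlet energy of $\phi$ is then $O(1/k)$, while $\phi\to 1$ pointwise, so the stability inequality forces $\Ric_X(\nu_M,\nu_M)+|\sff_M|^2\equiv 0$ on $M$ and $\sff_{\p X}(\nu_M,\nu_M)\equiv 0$ on $\p M$. Your proposal skips this final volume-growth/test-function step entirely, and the vague ``tracing equality'' heuristic does not substitute for it.

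In short: the $\theta$-bubble, second-variation rearrangement, and Gauss--Bonnet steps you outline match the paper's Sections~3--4, but you omit the structural analysis of ends and of $\p M$ (Section~5) and the linear volume growth plus cutoff argument (Section~6), and your proposed contradiction at the last step is not valid as stated.
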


We will elaborate the assumptions of weakly bounded geometry and the intermediate curvature condition $\Ric^X_2$ in the next section. In particular, a compact 4-manifold with non-negative sectional curvature and convex boundary, for example the unit ball in $\RR^4$, satisfies our assumptions.

\begin{coro}
	There is no complete two-sided stable free boundary minimal immersion in $\BB^4$.
\end{coro}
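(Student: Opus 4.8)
The plan is to derive the corollary directly from Theorem \ref{rigidity} by checking that the unit ball $\BB^4$ satisfies all the hypotheses, and then ruling out the rigid conclusion. First I would verify the curvature hypotheses: the flat metric on $\BB^4$ has vanishing curvature tensor, so in particular $\Ric^X_2 \geq 0$ (indeed $\equiv 0$); the boundary $\p \BB^4 = \SS^3$ is a round unit sphere sitting inside $\RR^4$, which is convex with second fundamental form $\sff_{\p X} = g_{\SS^3} \geq 0$ (with respect to the inward normal) and constant mean curvature $H_{\p X} = 3 > 0$, so we may take $H_0 = 3$. The weakly bounded geometry hypothesis is immediate since $\BB^4$ is compact with smooth boundary (this is the content of the remark following Theorem \ref{rigidity} that compact $4$-manifolds with nonnegative sectional curvature and convex boundary qualify). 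Thus Theorem \ref{rigidity} applies to any complete two-sided stable free boundary minimal immersion $M^3 \hookrightarrow \BB^4$.

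Next, suppose for contradiction that such an immersion $(M^3, \p M) \hookrightarrow (\BB^4, \SS^3)$ exists. By Theorem \ref{rigidity}, $M$ is totally geodesic and $\sff_{\p X}(\nu_M, \nu_M) = 0$ along $\p M$. But along $\p \BB^4 = \SS^3$ we computed $\sff_{\p X} = g_{\SS^3}$, so $\sff_{\p X}(\nu_M, \nu_M) = |\nu_M|^2 = 1 \neq 0$, since $\nu_M$ is a unit vector tangent to $\SS^3$ (it is the unit normal of $M$ in $\BB^4$, and the free boundary condition forces $M \perp \p \BB^4$, hence $\nu_M$ is tangent to $\SS^3$). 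This contradiction shows no such $M$ exists, provided $\p M \neq \emptyset$.

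The one case requiring a separate remark is when $\p M = \emptyset$, i.e. $M$ is a closed minimal hypersurface in the interior of $\BB^4$; a genuinely closed $M$ cannot occur inside a ball since by the maximum principle / convex hull property a minimal submanifold lies in the convex hull of its boundary, so a boundaryless compact minimal $M$ in $\overline{\BB^4}$ must be a point. Alternatively, since $M$ is complete and $\BB^4$ is bounded with convex boundary, $M$ must reach the boundary, so the free boundary condition genuinely applies with $\p M \neq \emptyset$. The main (and essentially only) obstacle here is confirming that the free boundary condition indeed forces $\nu_M$ to be tangent to $\p \BB^4$: by definition of free boundary immersion, $M$ meets $\p X$ orthogonally along $\p M$, so $T_p M \supset (T_p \p M)$ and the remaining direction of $T_p M$ is the outward conormal of $\p M$ in $M$, which lies in $T_p(\p X)$; hence the normal $\nu_M$, being orthogonal to $T_pM$ inside $T_pX$, together with the orthogonal splitting, is forced into $T_p(\p X)$. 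With this, the numerical contradiction $1 = 0$ completes the proof.
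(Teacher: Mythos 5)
Your proof is correct and follows what is clearly the intended (though unwritten) argument in the paper: check that $\BB^4$ satisfies the hypotheses of Theorem \ref{rigidity} (flat metric gives $\Ric^X_2 \equiv 0$, the sphere is convex with $H_{\SS^3}=3$, compactness gives weakly bounded geometry), apply the theorem to get $\sff_{\p X}(\nu_M,\nu_M)=0$ along $\p M$, and note that on $\SS^3$ one has $\sff_{\p X}=g_{\SS^3}$ so that $\sff_{\p X}(\nu_M,\nu_M)=|\nu_M|^2=1$, a contradiction. The observation that the free boundary condition forces $\nu_M$ tangent to $\SS^3$ is the right small lemma to make this rigorous.

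Two minor points. First, your parenthetical ``with respect to the inward normal'' is backwards for the paper's convention: with $\sff(Y,Z)=-\langle\overline{\nabla}_Z Y,\nu\rangle$ and $\nu$ the \emph{outward} normal of $\p\BB^4$ (the one used throughout, as the paper fixes by declaring $H>0$ for a sphere with outward normal in the ball), one gets $\sff_{\SS^3}=+g_{\SS^3}$. This does not affect the contradiction. Second, your handling of the $\p M=\emptyset$ case is slightly loose: the convex hull argument disposes of \emph{compact} boundaryless $M$, but your ``alternatively, $M$ must reach the boundary'' claim is not true in general for complete bounded minimal hypersurfaces (cf.\ Nadirashvili-type examples). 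However, this is a non-issue: a ``free boundary'' immersion implicitly has $\p M\neq\emptyset$, and in any case a complete two-sided \emph{stable} minimal hypersurface lying in the flat $\BB^4\subset\RR^4$ with $\p M=\emptyset$ is ruled out by the Chodosh--Li stable Bernstein theorem (it would have to be a hyperplane, which is unbounded), so the corollary is safe.
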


The method that enables the study of PSC manifolds in \cite{chodosh2024complete} is called $\mu$-bubbles, introduced in Gromov's 1996 paper \cite{gromov1996}. 
Recently, the $\mu$-bubble method has been revisited to obtain prolific results including the Generalized Geroch Conjecture, the Stable Bernstein Theorem conjectured by Schoen and the Multiplicity One Conjecture of Marques and Neves
(\cite{gromov2019scalarcurvaturemanifoldsboundaries}, \cite{gromovMetricInequalitiesScalar2018a}, \cite{zhu2021WidthEstimate}, \cite{zhu2023Rigidity},\cite{zhou2019MulOneConjec},\cite{chodoliSoapBubb}, \cite{chodoshli2024acta},\cite{chodoshli2023FPi}).

The method of $\mu$-bubble, as a generalization of minimal hypersurfaces, has proven to be a powerful tool to analyze the geometry of the ambient manifold.

In the case of manifolds with boundary, the analogy is given by capillary surfaces, which means constant mean curvature (CMC) surfaces  having constant contact angle with the ambient manifolds' boundary.
Li (\cite{li2020polyhedron}) used the method of capillary surfaces to show Gromov's dihedral rigidity conjecture for conical and prism type polyhedron. Chai and Wang (\cite{chaiwangDihedral}) confirmed the conjecture for some cases of hyperbolic 3-manifold.
 
In \cite{wu2024capillary}, the author used capillary surfaces with prescribed (varying) contact angle, a notion called ``$\theta$-bubble'', to study comparison results and geometry of manifolds with non-negative scalar curvature (NNSC) and uniformly mean convex boundary, obtaining sharp comparison results for surfaces, a 1-Urysohn width bound, a decomposition for the boundary of such manifolds, and a bandwidth estimate.
Using capillary surfaces with prescribed contact angle, Ko and Yao (\cite{koyao2024scalar}) proved a smooth comparison and rigidity result analogous to Gromov's dihedral rigidity conjecture. 

The method of $\theta$-bubble in the above work for surfaces in 3-manifolds can be used inductively. We observe that in manifolds with NNSC and uniformly mean convex boundary, we have the PSC equivalent ``inheritance''phenomenon. For a precise (and more general) statement, see Lemma \ref{inherit}.

Since we are now interested in hypersurfaces in a non-compact ambient manifold, it's important to control the number of ends of its stable  minimal hypersurfaces. Cao, Shen and Zhu (\cite{cao1997structure}) proved that a complete stable minimal hypersurfaces in $\RR^4$ has at most one end; a similar result on non-parabolic ends is obtained for manifolds with non-negative intermediate Ricci curvature in \cite{chodosh2024complete}. The case with FBMH $(M^3,\p M) \hookrightarrow (X^4,\p X)$ requires additional control of the boundary. We would like to use $\theta$-bubbles (near the boundary) to exhaust the non-parabolic end.
But a priori we don't know if the boundary $\p M$ is connected or disconnected, compact or non-compact, whether it's contained in the parabolic ends or non-parabolic end.

Using  Theorem \ref{NcptBound}, we can show if $(M^3,\p M) \rightarrow (X^4,\p X)$ is a FBMH and $\Ric^X_2\geq 0, \sff^{\p X}_2\geq 0$, assume $M$ is non-parabolic and $\Vol(M)=\infty$, then each component of $\p M$ must be non-compact, and each component of the boundary $\p M$ must has an end in the only non-parabolic end of $M$.

We now provide a sketch of the proof of Theorem \ref{rigidity}. We may pass to the universal cover and assume $M$ is simply connected. The idea is similar to \cite{chodosh2024complete}. We want to control the volume growth of a stable FBMH $M^3$ in $X^4$ as of Theorem \ref{rigidity}. The parabolic ends behave  in a good way as one can find a harmonic function that approaches $1$ everywhere while the Dirichlet energy goes to $0$ when exhausting the parabolic end, which serves as a good test function to plug into the stability inequality (\ref{stabIneq}). On a non-parabolic end, existence of a positive barrier function prevents us from using the same idea, so we want to exhaust a non-parabolic end with chunks with bounded diameter and volume, which shows that each non-parabolic end grows linearly in volume. This is achieved using $\theta$-bubbles together with the control of the boundary $\p M$ and the non-parabolic end.

In section 2, we state some preliminaries and introduce the notations and set up of the paper. In section 3, we define a ``warped $\theta$-bubble'' and derive its first and second variations. In section 4, we derive the inductive process and describe the inheritance phenomenon of warped $\theta$-bubbles. In section 5, we study  non-parabolic ends of FBMH in manifolds with non-negative intermediate Ricci curvature and 2-convex boundary conditions. In section 6, we prove the main results combining all the ingredients.

\textbf{Acknowledgements.} The author wants to thank her advisor Otis Chodosh for many insightful discussions and continuous encouragements in writing of this paper.

\section{Preliminaries}

We first set up some notations and definitions in this section. Recall that an immersed submanifold $M\hookrightarrow X$ is minimal if its mean curvature vanishes everywhere. Throughout the paper we use the convention $\sff_M(Y,Z)=-\langle \overline{\nabla}_ZY, \nu_M \rangle$ given a choice of unit normal vector $\nu_M$ for a two-sided hypersurface $M$ and $\overline{\nabla}$ the Levi-Civita connection on the ambient Riemannian manifold $X$. We define mean curvature as $H_M=\text{tr}(\sff_M)$. In this convention, the mean curvature of a sphere with outward pointing unit normal in the Euclidean ball is positive. We denote the induced connection on $M$ as $\nabla$.

We reserve the notation $\p M$ to denote the boundary of a manifold $(M,\p M)$, and if $\Omega$ is an open subset of $M$, we denote the topological boundary as $\p'\Omega$ and the closure of $\Omega$ as $\overline{\Omega}$. Then $\p' \Omega \cap \p M=\p'(\Omega \cap \p M)$ if $\Omega$ has Lipschitz boundary and $\overline{\Omega}$ intersect $\p M$ transversally. 

We first define the notion of free boundary minimal hypersurface (FBMH). Consider an immersion of hypersurface $(M,\p M) \hookrightarrow (X,\p X)$ and we always require $\p M \subset \p X$ and the immersion is two-sided.

\begin{defi}
	An immersion $(M,\p M) \hookrightarrow (X,\p X)$ is a FBMH if it's a critical point to the area functional among hypersurfaces with boundary in $\p X$. Equivalently, a FBMH has the following characterization,
	\begin{itemize}
		\item the mean curvature vanishes everywhere,
		\item the boundaries $\p M$ and $\p X$ intersect orthogonally, that is, the outward pointing unit normal of $\p M \subset M$ agrees with the outward pointing unit normal of $\p X \subset X$ (so the second fundamental form of $\p M \hookrightarrow M$ is the restriction of the second fundamental form of $\p X \hookrightarrow X$ on the tangent bundle $T \p M$).
	\end{itemize}
	The FBMH is stable if its second variation of area is non-negative, that is, for any compactly supported function $\varphi$ on $M$, we have the following stability inequality,
	\begin{equation}
		0\leq \int_M |\nabla \varphi|^2-(\Ric_X(\nu_M,\nu_M)+|\sff_M|^2)\varphi^2-\int_{\p M} \sff_{\p X}(\nu_M,\nu_M)\varphi^2.
	\end{equation}
\end{defi}

We denote the Ricci curvature of a Riemannian manifold $X$ as $\Ric_X$ and scalar curvature as $R_X$. We denote the Riemann curvature tensor as $R_X(\cdot,\cdot,\cdot,\cdot)$.

We now introduce the non-negative 2-intermediate Ricci curvature condition $\Ric^X_2\geq 0$, lying between non-negative sectional curvature and non-negative Ricci curvature. 

\begin{defi}
	We say a Riemannian manifold $X$ has $\Ric^X_2\geq 0$ if for any $x\in X$ and orthonormal vectors $u,v,w$ of $T_xX$,
	\begin{equation*}
		R_X(v,u,u,v)+R(w,u,u,w)\geq 0.
	\end{equation*}
\end{defi}
If the dimension of $X$ is at least 3, then $\Ric^X_2\geq 0$ implies $\Ric_X \geq 0$.

Combining $\Ric^X_2\geq 0$ and Gauss-Codazzi equation, a FBMH in $X$ has its Ricci bounded from below by its second fundamental form (see \cite{chodosh2024complete}, \cite{wu2023free}).
\begin{lemm}[\cite{chodosh2024complete}, Lemma 4.2]
	Let $(M^{n-1},\p M) \hookrightarrow (X^n,\p X)$ be a FMBH and $\Ric^X_2 \geq 0$, then
	\begin{equation*}
		\Ric_M \geq -\frac{n-2}{n-1} |\sff_M|^2.
	\end{equation*}
\end{lemm}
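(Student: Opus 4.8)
The statement is pointwise, so the plan is to fix $p\in M$ and a unit vector $v\in T_pM$ and estimate $\Ric_M(v,v)$ directly. First I would complete $v$ to an orthonormal frame $e_1=v,e_2,\dots,e_{n-1}$ of $T_pM$, write $\nu=\nu_M$, and expand
\[
\Ric_M(v,v)=\sum_{j=2}^{n-1}R_M(e_1,e_j,e_j,e_1)
\]
via the Gauss equation. In the paper's sign convention this becomes $\sum_{j=2}^{n-1}\big(R_X(e_1,e_j,e_j,e_1)+\sff_M(e_1,e_1)\sff_M(e_j,e_j)-\sff_M(e_1,e_j)^2\big)$, i.e.\ ambient sectional curvatures plus the familiar quadratic $\sff_M$-terms.

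Then I would consume the two hypotheses, once each. Minimality ($H_M=0$) gives $\sum_{j\ge2}\sff_M(e_j,e_j)=-\sff_M(e_1,e_1)$, which collapses the $\sff_M$-contribution to $-\sum_{j=1}^{n-1}\sff_M(e_1,e_j)^2$. For the ambient part, I would apply $\Ric^X_2\ge0$ to each orthonormal triple $(e_1,e_j,e_k)$ with $j\ne k$ in $\{2,\dots,n-1\}$, getting $R_X(e_1,e_j,e_j,e_1)+R_X(e_1,e_k,e_k,e_1)\ge0$, and sum these over all such pairs; each index then recurs with positive multiplicity, forcing $\sum_{j=2}^{n-1}R_X(e_1,e_j,e_j,e_1)\ge0$. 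At that point $\Ric_M(v,v)\ge-\sum_{j=1}^{n-1}\sff_M(e_1,e_j)^2=-|\sff_M(v,\cdot)^\sharp|^2\ge-\|\sff_M\|_{\mathrm{op}}^2$, the squared operator norm of $\sff_M$ viewed as a self-adjoint endomorphism of $T_pM$.

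The final step is pure linear algebra: $\sff_M$ is trace-free by minimality, so its eigenvalues $\lambda_1,\dots,\lambda_{n-1}$ sum to zero; taking $|\lambda_1|$ largest, $\sum_{i\ge2}\lambda_i=-\lambda_1$ and Cauchy--Schwarz over the remaining $n-2$ eigenvalues gives $\sum_{i\ge2}\lambda_i^2\ge\lambda_1^2/(n-2)$, hence $\|\sff_M\|_{\mathrm{op}}^2=\lambda_1^2\le\frac{n-2}{n-1}\sum_i\lambda_i^2=\frac{n-2}{n-1}|\sff_M|^2$. Chaining the inequalities yields $\Ric_M(v,v)\ge-\frac{n-2}{n-1}|\sff_M|^2$ for every unit $v$, which is the claim.

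I do not expect a deep obstacle: the argument is arranged so that each hypothesis is used exactly once in a transparent way. The only points that require care are (i) keeping the Gauss-equation signs consistent with the paper's conventions for $\sff_M$ and for the curvature tensor, so the ambient term genuinely enters as $+R_X(e_1,e_j,e_j,e_1)$ and is nonnegative after the $\Ric^X_2$ summation rather than with a sign that would spoil the estimate, and (ii) observing that the pair-summation step — hence the clean constant $\frac{n-2}{n-1}$ — uses that $\{2,\dots,n-1\}$ has at least two elements, i.e.\ $\dim T_pM\ge3$, which is exactly the regime ($n\ge4$) relevant here.
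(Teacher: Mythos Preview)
Your argument is correct. The paper does not supply its own proof of this lemma --- it is stated with a citation to \cite{chodosh2024complete} --- so there is nothing to compare against here beyond noting that your approach (Gauss equation, pair-summing the $\Ric^X_2$ condition over tangent directions, then the trace-free eigenvalue inequality $\|\sff_M\|_{\mathrm{op}}^2\le\tfrac{n-2}{n-1}|\sff_M|^2$) is exactly the standard one from that reference, and your caveat about needing $n\ge4$ for the pair-summation step is accurate.
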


We can also define an analogous ``2-convexity'' condition for $\p X \hookrightarrow X$ (see \cite{wu2023free}).

\begin{defi}
	For a complete Riemannian manifold $(X,\p X)$, we say $\sff^{\p X}_2 \geq 0$ (or $\p X$ is 2-convex) if for any $x\in \p X$ and  orthonormal vectors $e_1,e_2$ of $T_x \p X$,
	\begin{equation*}
		\sff_{\p X}(e_1,e_1)+\sff_{\p X}(e_2,e_2)\geq 0.
	\end{equation*}
\end{defi}

We define the following ``weakly bounded geometry'' condition that holds for any compact manifolds or manifolds with bounded geometry (uniformly bounded curvature and injectivity radius).

\begin{defi}
	We say a complete Riemannian manifold $(X,\p X)$ has weakly bounded geometry (up to the boundary) at scale $Q>0$, if there is $\alpha \in (0,1)$ such that for any point $x\in X$,
	\begin{itemize}
		\item there is a pointed $C^{2,\alpha}$ local diffeomorphism $\Phi: (B_{Q^{-1}}(a),a) \cap \RR^n_+ \rightarrow (U,x) \subset X$ for some $a\in \RR^n_+:=\{(a_1,...,a_n)\in \RR^n, a_n \geq 0\}$;
		\item if $\p X \cap U \neq \emptyset $ then $ \Phi^{-1}(\p X \cap U)\subset \p \RR^n_+$;
	\end{itemize}
	furthermore, the map $\Phi$ satisfies,
	\begin{itemize}
		\item $e^{-2Q}g_0\leq \Phi^{*}g\leq e^{2Q}g_0$ as two forms, with $g_0$ the standard Euclidean metric;
		\item $\|\p_k \Phi^{*}g_{ij}\|_{C^{\alpha}}\leq Q$ where $i,j,k$ stand for indices in Euclidean space.
	\end{itemize}
\end{defi}

The following two Lemmas are useful consequences of the weakly bounded geometry condition. First is curvature estimates for stable FBMH following the stable Bernstein result of Chodosh and Li in dimension 4.

\begin{lemm}[\cite{wu2023free}, Lemma 3.1]\label{sffBound}
	Consider $(X^4,\p X,g)$ a complete Riemannian manifold with weakly bounded geometry, and $(M^3,\p M)\hookrightarrow (X^4,\p X)$ a FBMH, then there is a constant $C=C(X,g, Q)$ independent of $M$ such that, 
	\begin{equation*}
		\sup_{q\in M}|\sff_M(q)| \leq C 
	\end{equation*}
\end{lemm}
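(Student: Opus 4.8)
The plan is a blow-up argument: point selection plus rescaling to reduce the estimate to the stable Bernstein theorem of Chodosh and Li for complete two-sided stable minimal hypersurfaces in $\RR^4$, applied directly in the interior and, after reflection, in the free boundary case. (As the surrounding discussion indicates, stability of the immersion is essential and is assumed; without it the estimate is false, e.g.\ small catenoidal necks.) Suppose the assertion fails: there are stable FBMHs $M_i\hookrightarrow(X^4,\p X)$ and points $x_i\in M_i$ with $|\sff_{M_i}(x_i)|\to\infty$. For each $i$, fix a weakly-bounded-geometry chart $\Phi_i\colon (B_{Q^{-1}}(a_i),a_i)\cap\RR^4_+\to(U_i,x_i)$ and set $\Sigma_i:=\Phi_i^{-1}(M_i\cap U_i)\subset(B_{Q^{-1}}(a_i)\cap\RR^4_+,\Phi_i^*g)$, a hypersurface with $\p\Sigma_i\subset\p\RR^4_+$. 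A standard scale-invariant point-picking within the chart (à la Ecker--Huisken, Schoen), applied to the function $z\mapsto \mathrm{dist}(z,\text{spherical part of }\p(B_{Q^{-1}}(a_i)\cap\RR^4_+))\cdot|\sff_{\Sigma_i}(z)|$ (which at $a_i$ equals $Q^{-1}|\sff_{\Sigma_i}(a_i)|$, comparable to $|\sff_{M_i}(x_i)|\to\infty$), produces $p_i\in\Sigma_i$ with $\lambda_i:=|\sff_{\Sigma_i}(p_i)|\to\infty$, a radius $\rho_i$ with $\lambda_i\rho_i\to\infty$, and $\sup_{B_{\rho_i}(p_i)\cap\Sigma_i}|\sff|\le 2\lambda_i$.

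Next, translate $p_i$ to the origin and multiply the metric by $\lambda_i^2$. The ellipticity bounds $e^{-2Q}g_0\le\Phi_i^*g\le e^{2Q}g_0$ together with the Hölder bounds $\|\p_k\Phi_i^*g_{jl}\|_{C^\alpha}\le Q$ force the rescaled metrics to converge in $C^{2,\alpha}_{\mathrm{loc}}$, on balls of radius $\sim\lambda_i\rho_i\to\infty$, to a constant-coefficient, hence flat, metric; after a harmless linear normalization the limit is the standard $\RR^4$ (if $\lambda_i\,\mathrm{dist}_g(p_i,\p X)\to\infty$) or $\RR^4_+$ with totally geodesic boundary hyperplane $\{z_4=0\}$ (whose second fundamental form is $O(\lambda_i^{-1}Q)\to0$ in the rescaled picture). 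On the hypersurface side, the rescaled $\widetilde\Sigma_i$ satisfy $|\sff|\le 2$ on balls of radius $\to\infty$ and $|\sff_{\widetilde\Sigma_i}(0)|=1$; the curvature bound gives uniform local graphical representations, and with Schauder estimates for the (free-boundary) minimal-surface system and local area bounds from a monotonicity formula valid in weakly bounded geometry, one obtains smooth subsequential convergence of the sheets through $p_i$ to a complete two-sided minimal immersion $M_\infty$ with $|\sff_{M_\infty}(0)|=1$; in the boundary case $\p\widetilde\Sigma_i\to\p M_\infty\subset\{z_4=0\}$, and, the orthogonality condition being scale-invariant and closed, $M_\infty$ meets $\{z_4=0\}$ orthogonally.

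Now stability passes to the limit by Fatou in the stability inequality with test functions supported where the convergence is smooth, using $\Ric_{\RR^4}\equiv0$ and $\sff_{\{z_4=0\}}\equiv0$. In the interior case $M_\infty$ is a complete two-sided stable minimal immersion in $\RR^4$, so the Chodosh--Li stable Bernstein theorem in dimension $4$ forces it to be a hyperplane, contradicting $|\sff_{M_\infty}(0)|=1$. In the boundary case, reflect $M_\infty$ across $\{z_4=0\}$ (Schwarz reflection, smooth because of orthogonality) to obtain a complete two-sided minimal immersion $\widehat M_\infty\subset\RR^4$; decomposing a compactly supported test function into parts even and odd under the reflection and invoking the free boundary stability inequality for $M_\infty$ (which constrains also functions not vanishing on $\p M_\infty$, via the even part) and the ordinary Dirichlet case (via the odd part) shows $\widehat M_\infty$ is stable, so again it is a hyperplane and we reach a contradiction. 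This establishes the uniform bound $C=C(X,g,Q)$.

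The crux of the argument is the up-to-the-free-boundary compactness and regularity in the rescaling step when the ambient metric is only $C^{2,\alpha}$: uniform graphical representations and Schauder estimates near the reflected boundary, local area control via a monotonicity formula in weakly bounded geometry, and — most importantly — verifying that the blow-up limit is genuinely complete, which is exactly what the extra scale $\lambda_i\rho_i\to\infty$ coming from the point selection buys. The reflection step is routine once the orthogonality of the free boundary is in hand, and the stable Bernstein input is quoted from Chodosh and Li.
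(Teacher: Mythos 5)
Your blow-up argument reducing to the Chodosh--Li stable Bernstein theorem, with Schwarz reflection handling the case where the base points drift to the boundary, is exactly the approach the paper announces in the sentence introducing the lemma (``curvature estimates for stable FBMH following the stable Bernstein result of Chodosh and Li in dimension 4'') and is the proof given in \cite{wu2023free}, Lemma 3.1. You are also right to flag that stability must be assumed: the lemma statement omits the word, but the surrounding text supplies it and the lemma is only ever applied to stable immersions. One small imprecision: the weakly bounded geometry hypothesis only gives $C^{1,\alpha}$ control on $\Phi^{*}g$ (the chart map is $C^{2,\alpha}$ but the metric coefficients are controlled only through their first derivatives in $C^{\alpha}$), so after rescaling the ambient metrics converge to the flat metric in $C^{1,\alpha}_{\mathrm{loc}}$ rather than $C^{2,\alpha}_{\mathrm{loc}}$; this is still sufficient to run the Schauder theory for the graphical minimal surface system, identify the smooth blow-up limit, and pass stability to the limit, so the argument goes through unchanged.
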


The second lemma is a volume control of balled of fixed radius.
\begin{lemm}[\cite{wu2023free}, Lemma 3.3]\label{volWBG}
	Let $(X^n,\p X)$ be a complete Riemannian manifold with weakly bounded geometry at scale $Q>0$, and $(M^{n-1},\p X)\hookrightarrow(X,\p X)$ a complete immersed submanifold with bounded second fundamental form, then for any $R>0$, there is a constant $C=C(R,Q)$ such that the volume of balls of radius $R$ around any point in $M$ is bounded by $C$, i.e. $\Vol(B^M_R(q))\leq C$ for any $q\in M$.
\end{lemm}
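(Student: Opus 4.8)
The plan is to show that $M$, equipped with its induced length metric, has uniformly bounded \emph{local} geometry in the sense of bi-Lipschitz charts, and then to run a combinatorial net-counting argument. The global volumes of $X$ and of $M$, as well as $\Vol(M\cap B^X_R(q))$, play no role and may well be infinite; everything stays intrinsic to a fixed-radius ball of $M$.

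\emph{Step 1: uniform bi-Lipschitz graphical charts.} Write $\Lambda:=\sup_M|\sff_M|<\infty$. Fix $p\in M$ and let $\Phi=\Phi_p\colon (B_{Q^{-1}}(a),a)\cap\RR^n_+\to (U_p,p)$ be a chart from the weakly bounded geometry hypothesis, with $a\in\p\RR^n_+$ when $p\in\p M\subset\p X$. Since $e^{-2Q}g_0\le\Phi^*g\le e^{2Q}g_0$ and $\|\p_k(\Phi^*g)_{ij}\|_{C^\alpha}\le Q$, the submanifold $\widetilde M:=\Phi^{-1}(F^{-1}(U_p))$ (with $F\colon M\to X$ the immersion; take the component through $a$) has Euclidean second fundamental form bounded by some $\Lambda'=\Lambda'(Q,\Lambda)$, while $\p\RR^n_+$ carries $\Phi^*g$-second fundamental form bounded by $C(Q)$; in the free boundary case the intersection of $\widetilde M$ with $\p\RR^n_+$ is moreover $\Phi^*g$-orthogonal, hence Euclidean-transverse with a $Q$-dependent angle bound. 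A standard argument — the unit normal of $\widetilde M$ rotates at Euclidean rate $\le C\Lambda'$ along $\widetilde M$ — then produces $\rho=\rho(Q,\Lambda)>0$ such that the component of $\widetilde M\cap B_\rho(a)$ through $a$ is the graph of a $C^{1,1}$ function $u$, with $\|u\|_{C^{1,1}}\le C(Q,\Lambda)$ and $|\nabla u|\le 1$, over a disk (resp.\ half-disk) of definite radius in $T_a\widetilde M$. Pulling $\Phi^*g$ back along this graph gives a metric $\lambda$-comparable to $g_0$, $\lambda=\lambda(Q,\Lambda)$. Because intrinsic distance on $M$ dominates extrinsic distance, a curve in $M$ issuing from $p$ of length $<r_1$ (for a suitable $r_1=r_1(Q,\Lambda)>0$) stays inside $U_p$ and, by connectedness, inside this graphical sheet; consequently there is $L=L(Q,\Lambda)\ge 1$ and an $L$-bi-Lipschitz homeomorphism $\psi_p$ from $B^M_{r_1}(p)$ (indeed from a neighborhood $V_p\supseteq B^M_{r_1}(p)$) onto the (half-)ball of radius $r_1$ in $\RR^{n-1}$, with $\psi_p(p)=0$.

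\emph{Step 2: local volume and packing bounds.} From Step 1, for every $p\in M$ and every $s\le r_1$,
\[
\Vol\!\big(B^M_s(p)\big)\ \le\ L^{2(n-1)}\,\omega_{n-1}\,s^{n-1}\ =:\ V(s),
\]
since $\psi_p$ sends $B^M_s(p)$ into the Euclidean ball of radius $Ls$ and distorts $(n-1)$-volume by a factor $\le L^{n-1}$; likewise any $\delta$-separated subset of $B^M_{r_1/2}(p)$ has cardinality at most a constant $K=K(n,Q,\Lambda,\delta)$, because its $\psi_p$-image is a $(\delta/L)$-separated subset of the Euclidean ball of radius $Lr_1/2$.

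\emph{Step 3: net counting.} Fix $q\in M$ and $R>0$; replacing $M$ by the connected component of $q$ we may assume $M$ connected. Put $\delta:=r_1/(6L)$ and choose a maximal $\delta$-separated set $\{p_1,\dots,p_N\}\subset B^M_R(q)$ with $p_1=q$. Maximality gives $B^M_R(q)\subset\bigcup_i B^M_\delta(p_i)$, so
\[
\Vol\!\big(B^M_R(q)\big)\ \le\ \sum_{i=1}^N \Vol\!\big(B^M_\delta(p_i)\big)\ \le\ N\cdot V(\delta),
\]
and it remains to bound $N$. Form the graph $G$ on $\{p_i\}$ with $p_i\sim p_j$ iff $d_M(p_i,p_j)<r_1/2$. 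By Step 2 each vertex has degree $\le K=K(n,Q,\Lambda)$. Moreover $G$ is connected with $d_G(q,p_i)\le \ell:=\lceil R/\delta\rceil$: discretize a near-minimizing $M$-path from $q$ to $p_i$ into steps of length $<\delta$ and replace each discretization point by a nearby net point; consecutive net points then lie within $3\delta<r_1/2$ of each other, giving a $G$-walk of length $\le \ell$. A connected graph of maximal degree $\le K$ and radius $\le\ell$ about $q$ has at most $1+K+\dots+K^\ell\le(\ell+1)K^\ell$ vertices, so $N\le(\ell+1)K^\ell=:N_0(R,Q,\Lambda)$ and hence $\Vol(B^M_R(q))\le N_0\,V(\delta)=:C$, a constant depending only on $n$, $R$, $Q$ and $\sup_M|\sff_M|$ (the latter being controlled by $Q$ and $X$ via Lemma \ref{sffBound} in the situations of interest).

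\emph{Main obstacle.} The only genuinely delicate point is Step 1 at boundary points: converting the ambient boundary chart into a bi-Lipschitz chart of $M$ onto a Euclidean \emph{half}-ball of definite size requires a uniform transversality between $M$ and $\p X$. This is automatic for a FBMH (orthogonal intersection, controlled up to the $e^{2Q}$ distortion of $\Phi^*g$), and is where the specific structure enters; in a fully general statement one must either assume such transversality or argue around it. Everything after Step 1 is soft. Note especially that the argument never bounds $\Vol(B^X_R(q))$ or $\Vol(M\cap B^X_R(q))$ — either of which can be infinite — but only intrinsic balls of $M$.
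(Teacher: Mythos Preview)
The paper does not supply its own proof of this lemma; it is quoted verbatim from \cite{wu2023free}, Lemma~3.3, and used as a black box. So there is nothing in the present paper to compare your argument against directly.

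That said, your argument is correct and is essentially the standard one. Step~1 (bounded $|\sff_M|$ plus $C^{1,\alpha}$-controlled ambient metric $\Rightarrow$ uniform graphical charts of definite radius, hence bi-Lipschitz charts for intrinsic balls) is the heart of the matter and you have it right; the observation that an intrinsic ball of small radius must stay on a single graphical sheet, because intrinsic distance dominates extrinsic, is exactly what makes the immersed case work. Step~3 is a clean replacement for Bishop--Gromov: since weakly bounded geometry only gives $C^{1,\alpha}$ control of the metric, a Ricci lower bound on $M$ is not immediate from the hypotheses as stated, so your combinatorial net-counting is arguably the more robust route. The resulting constant $C(R,Q)$ is exponential in $R$, but that is harmless for the application in Section~\ref{secRigidity}, where only a fixed radius is needed.

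Your caveat about the boundary is well placed. The lemma as written does not assume $M$ is free boundary, yet obtaining a half-disk chart of definite size at a point of $\p M$ requires uniform transversality of $M$ with $\p X$; this is exactly what orthogonality supplies in the FBMH case. Since the only use of the lemma in this paper is for a stable FBMH (via Lemma~\ref{sffBound}), your proof covers what is actually needed.
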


\section{Definitions of $\theta$-bubbles}

In this section we introduce the notion of $\theta$-bubble, a tool that is useful to probe the geometry of manifolds with non-negative scalar curvature (NNSC) and mean convex boundary, as an analogous notion to $\mu$-bubble, first utilized by Gromov to probe the geometry of manifolds with positive scalar curvature (PSC). 

We denote the reduced boundary of a Caccioppoli set $\Omega$ as $\p^{*}\Omega$. Then if $\Omega$ is an open set with smooth boundary of a Riemannian manifold, $\p^{*}\Omega=\p'\Omega$.

\begin{defi}
	Consider a Caccioppoli set $\Omega$ of a Riemannian manifold with boundary $(X^n,\p X)$, a ``$\theta$-bubble'' is a minimizer of the following ``prescribed contact angle'' problem, among variations that send $\p X$ to itself, 
	\begin{equation*}
		\cA_{\theta}(\Omega)=\cH^{n-1}(\p^{*} \Omega)-\int_{\p X \cap \Omega} \cos \theta,
	\end{equation*} 
	given a smooth function $\theta:\p X \rightarrow \RR$.
\end{defi}

Capillary surfaces are surfaces of constant mean curvature and constant contact angle, and they are a critical point of $\cA_{\theta}$ when $\theta$ is a constant function on $\p X$ and the volume ratio separated by the capillary surfaces is fixed. The $\theta$-bubbles can be thought as a ``generalized capillary surfaces''.

A more generalized notion called ``warped $\theta$-bubble'' is adapted from the definition of $\theta$-bubble with a weight on the ambient manifold, as an analogy to ``warped $\mu$-bubble''.

\begin{defi}
	Consider a Caccioppoli set $\Omega$ of a Riemannian manifold with boundary $(X^n,\p X)$, a `` warped $\theta$-bubble'' is a critical point to the following functional, among variations that send $\p X$ to itself, 
	\begin{equation*}
		\cA_{u}(\Omega)=\int_{\p^{*} \Omega} u d\cH^{n-1}-\int_{\p X \cap \Omega} u\cos \theta,
	\end{equation*} 
	given a smooth function $\theta:\p X \rightarrow \RR$ and a positive smooth function $u: X \rightarrow \RR_+$.
\end{defi}

We first show that a minimizer exists under suitable assumptions and is smooth up to codimension 4.

\begin{theo}\label{MPtheta}
	If $(N^n,\p N)$ is a compact connected Riemannian manifold with connected mean convex boundary $H_{\p N}>0$, let $\theta:\p N \rightarrow \RR$ be a smooth function such that $S_{\pm}=\{x \in \p N, \cos \theta(x)=\pm 1\}$ and $S_+$ and $S_-$ are non-empty closed domains in $\p N$ with smooth boundary. Let $\cS$ be the set of all Caccioppoli sets that contain $S_+$ and be disjoint from $S_-$, then
	\begin{equation*}
		\alpha=\inf_{\Omega \in \cS} \cA_{\theta}(\Omega)
	\end{equation*}
	is obtained by some $\Omega \in \cS$. If $n\leq 4$, then the minimizer $\Omega$ is a smooth submanifold with boundary that intersect $\p N$ transversally.
\end{theo}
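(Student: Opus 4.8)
The plan is to run the direct method of the calculus of variations, mirroring the construction of $\mu$-bubbles (cf.\ \cite{gromov1996}, \cite{chodoliSoapBubb}) and of $\theta$-bubbles in \cite{wu2024capillary}: the class $\cS$ and the degenerate sets $S_\pm=\{\cos\theta=\pm1\}$ play the role of the two barriers there.

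\emph{Step 1: existence.} First $\cS\neq\emptyset$ and $-\infty<\alpha<\infty$: since $S_+,S_-$ are disjoint compact subsets of $\p N$, a sufficiently thin one-sided collar of $S_+$ in $N$ is a Caccioppoli set in $\cS$; and since $|\cos\theta|\le 1$ and $N$ is compact, $\cA_\theta(\Omega)\ge\cH^{n-1}\!\bigl(\p^*\Omega\cap(N\setminus\p N)\bigr)-\cH^{n-1}(\p N)\ge-\cH^{n-1}(\p N)$. Take a minimizing sequence $\Omega_k\in\cS$; the energy bound controls the relative perimeters, hence the total variations $|D\chi_{\Omega_k}|(N)$, so by $BV$-compactness on the compact manifold $N$ a subsequence satisfies $\chi_{\Omega_k}\to\chi_{\Omega_\infty}$ in $L^1(N)$. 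The only term not obviously continuous under $L^1$-convergence is $\int_{\p N\cap\Omega}\cos\theta$, since it records the trace of $\chi_\Omega$ on $\p N$. The standard fix, available exactly because $|\cos\theta|\le 1$, is to choose a $C^1$ vector field $T$ on $N$ supported in a collar of $\p N$ with $|T|\le 1$ and $T\cdot\nu_{\p N}=\cos\theta$ on $\p N$, and to rewrite, via the divergence theorem,
\[
\cA_\theta(\Omega)=\int_{\p^*\Omega\cap(N\setminus\p N)}\bigl(1+T\cdot\nu_\Omega\bigr)\,d\cH^{n-1}-\int_\Omega\Div T\,.
\]
The first term has a nonnegative convex $1$-homogeneous integrand, hence is lower semicontinuous under $L^1$-convergence (it is a supremum of linear functionals of $\chi_\Omega$), and the second term is $L^1$-continuous; thus $\cA_\theta$ is lower semicontinuous and $\cA_\theta(\Omega_\infty)\le\alpha$. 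It remains to see $\Omega_\infty\in\cS$; as the constraints concern only the null sets $S_\pm\subset\p N$, I would impose them through fixed open neighborhoods $U_\pm\supset S_\pm$ with $\overline{U_+}\cap\overline{U_-}=\emptyset$, requiring $U_+\subset\Omega\subset N\setminus U_-$ (a condition closed under $L^1$-convergence), and check by a comparison that this does not change the infimum for $U_\pm$ small --- near $S_\pm$ the wetting term has $\cos\theta$ near $\pm1$, making it favorable to fill $U_+$ and vacate $U_-$. So $\alpha$ is attained by some $\Omega\in\cS$.

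\emph{Step 2: regularity and transversality for $n\le4$.} In $N\setminus\p N$ the $\cos\theta$ term plays no role, so $\Omega$ is a local perimeter minimizer there; by De Giorgi--Federer theory $\p^*\Omega\cap(N\setminus\p N)$ is a smooth embedded hypersurface away from a closed singular set of Hausdorff dimension $\le n-8$, hence smooth for $n\le7$. Near $\p N$, $\Omega$ minimizes a prescribed-contact-angle (capillary) functional, so by the boundary regularity theory for such minimizers it is a smooth hypersurface-with-boundary up to $\p N$ outside a boundary singular set of dimension small enough to be empty when $n\le4$ --- which is the origin of the dimension restriction. Finally, the comparison of Step 1 forces the wetted region of $\Omega$ to contain $U_+$ and miss $U_-$, so its free boundary $\p\bigl(\p^*\Omega\cap(N\setminus\p N)\bigr)$ lies in $\p N\setminus(U_+\cup U_-)$, where $\cos\theta\in(-1,1)$, i.e.\ the contact angle lies in $(0,\pi)$; hence $\p^*\Omega$ meets $\p N$ transversally, and for $n\le4$ the minimizer is a smooth submanifold-with-boundary meeting $\p N$ transversally.

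\emph{Main obstacle.} The interior analysis and the lower semicontinuity are routine. The delicate points are both at the boundary: confining the minimizer's free boundary to $\p N\setminus(S_+\cup S_-)$, where the contact angle is nondegenerate, and invoking capillary boundary regularity in a dimension range where it holds unconditionally --- it is the latter that pins the hypothesis to $n\le4$ rather than the $n\le7$ one would get from the interior alone.
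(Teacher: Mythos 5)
Your overall plan --- direct method plus a barrier comparison to keep the minimizer away from the degenerate sets $S_\pm$, followed by interior regularity and capillary boundary regularity with the dimension restriction coming from the latter --- is the same as the paper's, and your reformulation of $\cA_\theta$ via an auxiliary vector field $T$ with $|T|\le 1$, $T\cdot\nu_{\p N}=\cos\theta$ is a correct and standard way to get lower semicontinuity (the paper leaves this implicit and cites the capillary literature). However, the pivotal step is treated as a heuristic and, as stated, it is not correct: you assert that near $S_\pm$ it is ``favorable to fill $U_+$ and vacate $U_-$'' because $\cos\theta$ is near $\pm1$, but this ignores the perimeter cost of pushing the free surface inward. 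To leading order the wetting bonus and the perimeter cost of the inner leaf both scale like the wetted area, so they roughly cancel, and the quantity that decides whether filling a thin collar of $S_+$ strictly lowers the energy is exactly the mean convexity $H_{\p N}>0$. You never use this hypothesis anywhere, even though it appears in the statement and is indispensable.

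Concretely, the paper constructs a foliation $\{\Gamma_t\}$ sweeping out a one-sided collar $\Omega_t$ of $S_+$, with $H_{\Gamma_t}\ge\eps>0$ because $H_{\p N}>0$, and applies the divergence theorem to the unit vector field $\nu_{\Gamma_t}$ to obtain, for any competitor $\Omega$,
\begin{equation*}
\cA_\theta(\Omega_t\cup\Omega)-\cA_\theta(\Omega)
\le \int_{\Omega_t\setminus\Omega}\!\Div(\nu_{\Gamma_t})-\int_{\p N\cap(\Omega_t\setminus\Omega)}\!\bigl(\nu_{\Gamma_t}\cdot\nu_{\p N}+\cos\theta\bigr)
\le -\int_{\Omega_t\setminus\Omega}\!H_t\le 0,
\end{equation*}
using $\cos\theta$ near $+1$ only to make the boundary term nonpositive, and $H_t>0$ to make the bulk term strictly negative when $|\Omega_t\setminus\Omega|>0$. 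This is what forces every minimizing sequence (and hence the limit) to contain a \emph{fixed} open collar $\Omega_+\supset S_+$ and avoid a fixed collar $\Omega_-\supset S_-$, which is both the existence of a minimizer in $\cS$ and the reason the minimizer's free boundary lives in the region where $\theta\in(0,\pi)$. You should replace your wetting-term heuristic with this (or an equivalent) barrier argument, making the dependence on $H_{\p N}>0$ explicit; without it the comparison can fail and the minimizer could degenerate onto $S_\pm$.
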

\begin{proof}
	We follow the arguments in \cite{wu2024capillary} for surfaces and 3-manifolds. It's enough to show that there is a fixed open neighborhood $\Omega_+$ of $S_+$ such that any minimizing sequence must contain $\Omega_+$, and be disjoint from a fixed open neighborhood $\Omega_-$ of $S_-$. Then by connectedness of $\p N$, a minimizer must exists and must intersect $\p N$ in a compact subset of $\{x\in \p N, -1<\cos \theta(x)<1\}$ and the regularity results of \cite{chodosh2024improved} (see also \cite{philippis2015regularity}) applies, for $n\leq 4$ the minimizer is smooth and intersect $\p N$ transversally.
	
	Consider the following family of foliations $\Phi_t(x):=\exp(-\varphi_t(x)\nu_{\p N})$ where $\nu_{\p N}$ is the outward pointing unit normal. Here $\varphi_t(x):=\{t_0\phi(x)+t,0\}$ for some small fixed $t_0>0$ to be chosen, and $\phi:\p N\rightarrow[-1,1]$ is a smooth function such that $S_+=\{x\in \p N, \phi(x)\geq 0\}$ and $\nabla\phi(x)\neq 0$ for any $x\in \p S_+$. Note that $\Gamma_t:=\overline{\Phi_t(\p N) \setminus \p N}$ is a smooth submanifold in $N$, and as $t_0 \rightarrow 0, t\leq t_0$, $\Gamma_t$ converge to $S_+$ smoothly, so $H_{\p N}>0$ implies $H_t:=H_{\Gamma_t}>\epsilon$ for some $\epsilon>0$. 
	
	Denote $\Omega_t:=\cup_{-t_0\leq s\leq t} \Gamma_t$ with outward pointing unit normal $\nu_{\Gamma_t}$ along the boundary $\Gamma_t$. Then  we have that $\nu_{\p N}(x)\cdot \nu_{\Gamma_0}(x)> -1 =-\cos\theta(x)$ for any $x\in \p S_+$, which implies for small $t_0$ and $0\leq t\leq t_0$, $\nu_{\p N}(x)\cdot \nu_{\Gamma_t}(x)+\cos\theta(x) >0$ for any $x\in \p \Gamma_t$.
	
	Then we take any candidate $\Omega$ (without loss of generality assume $\p \Omega$ smooth) and compare,
	\begin{align*}
		\cA_{\theta}(\Omega_t\cup \Omega)-\cA_{\theta}(\Omega)&= \cH^{n-1}(\p^*(\Omega_t\cup \Omega))-\cH^{n-1}(\p^*\Omega)-\int_{\p N \cap (\Omega_t\setminus \Omega)}\cos\theta\\
		&\leq \int_{\p^*\Omega_t\setminus\Omega}\nu_{\Gamma_t}\cdot\nu_{\Gamma_t}-\int_{\p^*\Omega\cap \Omega_t}\nu_{\Gamma_t}\cdot \nu_{\p \Omega}-\int_{\p N \cap (\Omega_t\setminus \Omega)}\cos\theta\\
		&=\int_{\Omega_t\setminus \Omega} \dive(\nu_{\Gamma_t})-\int_{\p N\cap (\Omega_t\setminus\Omega)}\nu_{\Gamma_t}\cdot \nu_{\p N}+\cos\theta\\
		&\leq\int_{\Omega_t\setminus\Omega}-H_t\leq0,
	\end{align*}
	using when $t\leq t_0$ is small then $H_t>\epsilon>0$;
	and the last inequality is sharp is $\Omega_t \setminus\Omega$ has nonzero measure.
	
	A similar argument applies to show that any minimizer must be disjoint to some fixed neighborhood  $\Omega_-$ of $S_-$.
\end{proof}
We can show that the same proof applies to the warped $\theta$-bubbles provided the weight function $u$ satisfies suitable boundary assumptions.
\begin{theo}\label{ExiMinWarp}
	If $(N^n,\p N)$ is a compact Riemannian manifold, let $\theta:\p N \rightarrow \RR$ be a smooth function such that $S_{\pm}=\{x \in \p N, \cos \theta(x)=\pm 1\}$ and $S_+$ and $S_-$ are non-empty closed domains with smooth boundary in $\p N$.
	Let $u>0$ be a smooth function on $N$ with $\nabla_{\nu_{\p N}}u+uH_{\p N}>0$.
	 
	Let $\cS$ be the set of all Caccioppoli sets that contain $S_+$ and be disjoint from $S_-$, then
	\begin{equation*}
		\alpha=\inf_{\Omega \in \cS} \cA_{u}(\Omega)
	\end{equation*}
	is obtained by some $\Omega \in \cS$. If $n\leq 4$, then the minimizer $\Omega$ is a smooth submanifold with boundary that intersect $\p N$ transversally.
\end{theo}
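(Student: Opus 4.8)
The plan is to rerun the proof of Theorem~\ref{MPtheta} carrying the weight $u$ throughout, the role of the mean convexity $H_{\p N}>0$ now being played by the hypothesis $\nabla_{\nu_{\p N}}u+uH_{\p N}>0$, which is exactly the positivity of the $u$-weighted mean curvature of $\p N$ (no connectedness of $N$ or $\p N$ is needed, since the argument is local near $S_\pm$ and componentwise otherwise). As there, the crux is an a priori trapping of minimizing sequences: I would first show that any $\Omega\in\cS$ can be modified, without increasing $\cA_u$, so that it contains a fixed open neighborhood $\Omega_+$ of $S_+$ and is disjoint from a fixed open neighborhood $\Omega_-$ of $S_-$. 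Once this is in hand, the direct method and the capillary regularity theory finish the proof exactly as before.

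To produce $\Omega_+$, keep the foliation $\Phi_t(x)=\exp(-\varphi_t(x)\nu_{\p N})$, the leaves $\Gamma_t=\overline{\Phi_t(\p N)\setminus\p N}$ and the collar $\Omega_t$ from the proof of Theorem~\ref{MPtheta}, and set $H^u_{\Gamma_t}:=-u^{-1}\dive(u\,\nu_{\Gamma_t})$ along the leaves, so that $\dive(u\,\nu_{\Gamma_t})=-u\,H^u_{\Gamma_t}$. Since $\dive(u\,\nu_{\Gamma_t})=u\,\dive(\nu_{\Gamma_t})+\nabla_{\nu_{\Gamma_t}}u$ and $\Gamma_t\to S_+$ in $C^\infty$ as $t_0\to0$ with $0\le t\le t_0$, one gets $u\,H^u_{\Gamma_t}\to\bigl(uH_{\p N}+\nabla_{\nu_{\p N}}u\bigr)\big|_{S_+}$; this limit is positive and $S_+$ is compact, so there is $\epsilon>0$ with $H^u_{\Gamma_t}\ge\epsilon$ for $t_0$ small, while the contact-angle estimate $\nu_{\Gamma_t}\cdot\nu_{\p N}+\cos\theta>0$ on $\p\Gamma_t$ is unchanged. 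For a competitor $\Omega$ with smooth boundary, the comparison of Theorem~\ref{MPtheta} — bounding the perimeter difference by a flux of $u\,\nu_{\Gamma_t}$ and applying the divergence theorem to $u\,\nu_{\Gamma_t}$ on $\Omega_t\setminus\Omega$ — becomes
\begin{equation*}
	\cA_u(\Omega_t\cup\Omega)-\cA_u(\Omega)\ \le\ -\int_{\Omega_t\setminus\Omega}u\,H^u_{\Gamma_t}\ -\ \int_{\p N\cap(\Omega_t\setminus\Omega)}u\bigl(\nu_{\Gamma_t}\cdot\nu_{\p N}+\cos\theta\bigr)\ \le\ 0,
\end{equation*}
strictly negative when $\Omega_t\setminus\Omega$ has positive measure, using $\inf_N u>0$, $H^u_{\Gamma_t}\ge\epsilon$ and nonnegativity of the boundary integrand. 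The symmetric construction near $S_-$ (again using $\nabla_{\nu_{\p N}}u+uH_{\p N}>0$, and $\cos\theta\to-1$ there) produces $\Omega_-$.

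Granting the trapping, the rest is standard. A minimizing sequence $\{\Omega_k\}\subset\cS$ may be taken with $\Omega_+\subset\Omega_k$ and $\Omega_k\cap\Omega_-=\emptyset$; since $u$ is bounded above and below, the bound on $\cA_u(\Omega_k)$ forces a uniform perimeter bound, so along a subsequence $\chi_{\Omega_k}\to\chi_{\Omega_\infty}$ in $L^1$ with $\Omega_+\subset\Omega_\infty$ and $\Omega_\infty\cap\Omega_-=\emptyset$, and lower semicontinuity of the weighted perimeter together with continuity of the boundary term gives $\cA_u(\Omega_\infty)\le\alpha$; thus $\Omega_\infty$ is a minimizer meeting $\p N$ only in the compact set $\{-1<\cos\theta<1\}$. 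Since $u$ is smooth and positive, $\cA_u$ is a uniformly elliptic weighted-perimeter functional with a capillary boundary term whose contact angle stays in a compact subinterval of $(0,\pi)$, so the regularity results of \cite{chodosh2024improved} (see also \cite{philippis2015regularity}) apply and, for $n\le4$, the minimizer is a smooth submanifold with boundary meeting $\p N$ transversally. The only delicate point is the barrier step: one must verify that the hypothesis $\nabla_{\nu_{\p N}}u+uH_{\p N}>0$ is exactly the positivity of the limiting weighted mean curvature $u\,H^u_{\Gamma_t}$ along $S_+$ — equivalently, that inserting the weight into the divergence identity used in Theorem~\ref{MPtheta} preserves the sign that makes the comparison work — after which the comparison, the compactness argument and the regularity step are verbatim adaptations, the smooth positive weight $u$ entering nowhere else.
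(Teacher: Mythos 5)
Your proposal is correct and takes essentially the same approach as the paper: rerun the foliation comparison from Theorem~\ref{MPtheta} with the weight $u$ inserted, so that $\dive(u\,\nu_{\Gamma_t})$ replaces $\dive(\nu_{\Gamma_t})$ and the hypothesis $\nabla_{\nu_{\p N}}u+uH_{\p N}>0$ supplies the needed sign as $t\to0$. Packaging $-u^{-1}\dive(u\,\nu_{\Gamma_t})$ as a weighted mean curvature $H^u_{\Gamma_t}$ is a harmless reformulation of the identical computation, and the remaining direct-method and regularity steps you spell out are the ones the paper leaves implicit in the phrase ``apply exactly the same foliation.''
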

\begin{proof}
	We apply exactly the same foliation as in Theorem \ref{MPtheta}, now we compare the following,
	
	\begin{align*}
		\cA_u(\Omega_t\cup \Omega)-\cA_u(\Omega)&=\int_{\p^*\Omega_t\setminus\Omega}u-\int_{\p^*\Omega\cap \Omega_t}u-\int_{\p N \cap (\Omega_t\setminus \Omega)}u\cos\theta\\
		&\leq\int_{\p^*\Omega_t\setminus\Omega}u\nu_{\Gamma_t}\cdot\nu_{\Gamma_t}-\int_{\p^*\Omega\cap \Omega_t} u\nu_{\Gamma_t}\cdot \nu_{\p \Omega}-\int_{\p N \cap (\Omega_t\setminus \Omega)}u\cos\theta\\
		&=\int_{\Omega_t\setminus \Omega} \dive(u\nu_{\Gamma_t})-\int_{\p N\cap (\Omega_t\setminus\Omega)}u(\nu_{\Gamma_t}\cdot \nu_{\p N}+\cos\theta)\\
		&\leq \int_{\Omega_t\setminus \Omega} \dive(u\nu_{\Gamma_t})\\
		&=\int_{\Omega_t\setminus \Omega} \nabla_{\nu_{\Gamma_t}} u+u\dive_{\Gamma_t} \nu_{\Gamma_t}\\
	\end{align*}
	Note as $t \rightarrow 0$, 
	\begin{equation*}
		\nabla_{\nu_{\Gamma_t}} u+u\dive_{\Gamma_t} \nu_{\Gamma_t} \rightarrow-\nabla_{\nu_{\p N}}u-uH_{\p N}<0.
	\end{equation*}
	Choosing $t$ small enough then we are done.
\end{proof}

We compute the first and second variation of (warped) $\theta$-bubbles.

\begin{lemm}[First Variation]
	If $\Omega$ is a smooth $\theta$-bubble in $(N^n,\p N)$ that intersect $\p N$ transversally, and $\p^*\Omega=\Sigma$, then let $\nu $ be the outward pointing unit normal of $\p \Sigma \subset \Sigma$ and $\overline{\nu}$ be the outward pointing unit normal of $\p \Sigma \subset (\overline{\Omega}\cap \p N)$, then
	\begin{equation}\label{1varTheta}
		H_{\Sigma}=0, \langle \nu,\overline{\nu}\rangle=\cos\theta.
	\end{equation}
	If $\Omega$ is a smooth warped $\theta$-bubble in $(N^n,\p N)$ that intersect $\p N$ transversally, and $\p^*\Omega=\Sigma$ and a choice of unit normal $\nu_{\Sigma}$, then	\begin{equation}\label{1varWarp}
		H_{\Sigma}=-\frac{\nabla_{\nu_{\Sigma}}u}{u}, \langle \nu,\overline{\nu}\rangle=\cos\theta.
	\end{equation}
\end{lemm}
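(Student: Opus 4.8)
The statement is a standard first-variation computation for the functionals $\cA_\theta$ and $\cA_u$, so the strategy is simply to differentiate the functional along an admissible variation and read off the Euler--Lagrange conditions. The key point is to organize the variation correctly: a variation of $\Omega$ by a vector field $Y$ that is tangent to $\p N$ along $\p N$ produces \emph{two} boundary terms for $\cA_\theta$ — one from the interior piece $\Sigma=\p^*\Omega\cap \mathring N$ sweeping out area, and one from the ``capillary'' piece where $\p^*\Omega$ meets $\p N$. Both pieces share the common boundary $\p\Sigma\subset\p N$, and the whole content of the lemma is that the stationarity forces (i) the mean curvature term to vanish in the interior and (ii) the two normals along $\p\Sigma$ to meet at the prescribed angle $\theta$.

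\textbf{Step 1: Set-up of the variation.} I would take a one-parameter family $\Omega_s$ with $\Omega_0=\Omega$, generated by a smooth compactly supported vector field $Y$ on $N$ satisfying $\langle Y,\nu_{\p N}\rangle=0$ on $\p N$ (so that $\p N$ is sent to itself). Write $\Sigma_s=\p^*\Omega_s\cap\mathring N$ and $T_s=\p^*\Omega_s\cap\p N=\p\Omega_s\cap\p N$, so that $\cA_\theta(\Omega_s)=\cH^{n-1}(\Sigma_s)-\int_{T_s}\cos\theta$. Decompose $Y$ along $\Sigma$ as $Y=\varphi\,\nu_\Sigma+Y^\top$ with $\varphi=\langle Y,\nu_\Sigma\rangle$.

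\textbf{Step 2: Differentiate each term.} The first variation of area is the classical formula
\begin{equation*}
	\frac{d}{ds}\Big|_{s=0}\cH^{n-1}(\Sigma_s)=-\int_\Sigma H_\Sigma\,\varphi\,d\cH^{n-1}+\int_{\p\Sigma}\langle Y,\nu\rangle\,d\cH^{n-2},
\end{equation*}
where $\nu$ is the outward unit conormal of $\p\Sigma\subset\Sigma$. For the capillary term, since the $s$-variation of $T_s$ inside $\p N$ is driven by $Y$ restricted to $\p N$, one gets $\frac{d}{ds}\big|_{s=0}\int_{T_s}\cos\theta=\int_{\p\Sigma}\cos\theta\,\langle Y,\overline\nu\rangle\,d\cH^{n-2}$, where $\overline\nu$ is the outward unit conormal of $\p\Sigma$ inside $\overline\Omega\cap\p N$ (the sign of $\overline\nu$ is chosen so that $T_s$ grows in the $\overline\nu$-direction exactly when $\Omega_s$ grows). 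Summing, stationarity gives
\begin{equation*}
	0=-\int_\Sigma H_\Sigma\,\langle Y,\nu_\Sigma\rangle +\int_{\p\Sigma}\langle Y,\nu-(\cos\theta)\,\overline\nu\rangle.
\end{equation*}
Taking $Y$ first supported in the interior of $\Sigma$ with arbitrary normal component forces $H_\Sigma=0$; then allowing $Y$ to be nonzero on $\p\Sigma$ (still tangent to $\p N$) and noting that both $\nu$ and $\overline\nu$ lie in the plane spanned by $\nu_\Sigma$ and $\nu_{\p N}$ with $\langle\nu,\nu_{\p N}\rangle\ge 0$, unit length pins down $\langle\nu,\overline\nu\rangle=\cos\theta$. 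This is \eqref{1varTheta}.

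\textbf{Step 3: The warped case.} For $\cA_u$ the computation is identical but with the weight $u$ inserted. The weighted first variation of area reads $\frac{d}{ds}\big|_{s=0}\int_{\Sigma_s}u=\int_\Sigma(\nabla_{\nu_\Sigma}u - uH_\Sigma)\varphi + \int_{\p\Sigma}u\,\langle Y,\nu\rangle$ (the extra $\nabla_{\nu_\Sigma}u$ term comes from differentiating $u$ along the normal flow), while $\frac{d}{ds}\big|_{s=0}\int_{T_s}u\cos\theta=\int_{\p\Sigma}u\cos\theta\,\langle Y,\overline\nu\rangle$ since $u$ is held fixed along $\p N$ to leading order in the tangential direction. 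Hence stationarity yields $\int_\Sigma(\nabla_{\nu_\Sigma}u-uH_\Sigma)\varphi+\int_{\p\Sigma}u\langle Y,\nu-(\cos\theta)\overline\nu\rangle=0$; the interior test functions give $H_\Sigma=-\nabla_{\nu_\Sigma}u/u$ and the boundary test functions give $\langle\nu,\overline\nu\rangle=\cos\theta$ exactly as before, proving \eqref{1varWarp}.

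\textbf{Main obstacle.} There is no serious analytic difficulty here — regularity of the minimizer and transversal intersection with $\p N$ are already granted by Theorems \ref{MPtheta} and \ref{ExiMinWarp}. The only point requiring care is bookkeeping of the boundary terms: getting the orientation of $\overline\nu$ consistent with the orientation of $\nu$ (so that the angle condition comes out as $\langle\nu,\overline\nu\rangle=\cos\theta$ rather than $-\cos\theta$), and checking that the admissibility constraint $\langle Y,\nu_{\p N}\rangle=0$ still leaves enough freedom in the tangential component of $Y$ along $\p\Sigma$ to conclude the angle condition pointwise. I would handle this by working in the $2$-plane $\mathrm{span}\{\nu_\Sigma,\nu_{\p N}\}$ along $\p\Sigma$, writing $\nu$ and $\overline\nu$ explicitly in terms of the contact angle between $\Sigma$ and $\p N$, and verifying the signs against the model case of a spherical cap in the Euclidean half-space.
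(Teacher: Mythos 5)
Your approach is essentially the paper's: derive \eqref{1varWarp} by a direct first-variation computation, decompose the variation field into normal and tangential parts, and read off the Euler--Lagrange conditions in the interior and at $\p\Sigma$, where the angle condition follows because $\nu,\overline\nu$ lie in the $2$-plane normal to $T\p\Sigma$ and the admissible $Y$ contributes only through its $\overline\nu$-component. For \eqref{1varTheta} the paper simply cites the constant-$\theta$ case in \cite{ros1997stability}, while you carry out the computation — same content.

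One sign slip to fix: with the paper's convention $\sff_\Sigma(Y,Z)=-\langle\overline\nabla_Z Y,\nu_\Sigma\rangle$ one has $\dive_\Sigma\nu_\Sigma=+H_\Sigma$, so the weighted first variation reads
\begin{equation*}
\frac{d}{ds}\Big|_{s=0}\int_{\Sigma_s}u=\int_\Sigma\bigl(\nabla_{\nu_\Sigma}u+uH_\Sigma\bigr)\varphi+\int_{\p\Sigma}u\,\langle Y,\nu\rangle,
\end{equation*}
and the interior Euler--Lagrange equation $\nabla_{\nu_\Sigma}u+uH_\Sigma=0$ then gives $H_\Sigma=-\nabla_{\nu_\Sigma}u/u$. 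As written, your integrand $\nabla_{\nu_\Sigma}u-uH_\Sigma$ would instead force $H_\Sigma=+\nabla_{\nu_\Sigma}u/u$, contradicting the conclusion you (correctly) state; this is exactly the sign-convention bookkeeping you flagged as the main obstacle, and is the only adjustment needed.
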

\begin{proof}
	The proof of (\ref{1varTheta}) is the same as the case where $
	\theta$ is a constant function, as shown in \cite{ros1997stability}.
	We now prove (\ref{1varWarp}).
	Let $X_t$ be a vector field along $\Sigma_t$ with $X_t(x)\in T_x \p N$ for $x\in \p N\cap \Sigma_t$ then,
	\begin{align*}
		\frac{d}{dt}(\Vol(\Sigma_t))&=\int_{\Sigma_t} \dive_{\Sigma_t}(uX_t)-\int_{\p\Sigma_t} u\cos \theta (X_t\cdot \overline{\nu})\\
		&=\int_{\Sigma_t} \nabla_{\Sigma_t} u \cdot X_t^{\perp}+u\dive_{\Sigma_t}X_t^{\perp} +\int_{\p\Sigma_t} uX_t\cdot \nu_t-u\cos \theta (X_t\cdot \overline{\nu})\\
		&=\int_{\Sigma_t}(\nabla_{\nu_{\Sigma_t}}u+H_{\Sigma_t}u)X_t\cdot \nu_{\Sigma_t}+\int_{\p\Sigma_t} uX_t\cdot \overline{\nu_t}(\nu_t\cdot \overline{\nu_t}-\cos \theta),
	\end{align*} 
	where we used that for $t=0$, $\nu_t=(X_t\cdot \overline{\nu_t}) \overline{\nu_t}+(X_t \cdot \nu_{\p N}) \nu_{\p N}$.
	
	Setting the first variation equal to zero we obtain (\ref{1varWarp}).
\end{proof}

\begin{lemm}[Second Variation]
	If $\Omega$ is a smooth warped $\theta$-bubble in $(N^n,\p N)$ that intersect $\p N$ transversally, denote $\p^*\Omega=\Sigma$ and given a choice of unit normal $\nu_{\Sigma}$, then	let $X_t$ be a vector field along $\Sigma_t$ with $\Sigma_0=\Sigma$ and $X_t(x) \in T_x\p N$ for $x\in \p N \cap \Sigma_t$, we denote $X_0 \cdot \nu_{\Sigma}=:\phi$, then
	\begin{align*}
		\frac{d^2}{dt^2}\bigg \rvert_{t=0}\Vol(\Sigma_t)&= \int_{\Sigma} -u\phi\Delta\phi -u\phi^2(\Ric_{N}(\nu_{\Sigma},\nu_{\Sigma})+|\sff_{\Sigma}|^2)+\phi^2(\Delta_N u-\Delta_{\Sigma}u)-\phi\langle\nabla^{\Sigma}\phi,\nabla^{\Sigma}u\rangle\\
		+&\int_{\p \Sigma} u\phi \nabla_{\nu }\phi-\frac{u\phi^2}{\sin\theta}\left(\sff_{\p N}(\overline{\nu},\overline{\nu})-\cos\theta \sff_{\Sigma}(\nu,\nu )-\nabla_{\overline{\nu }} \theta \right),
	\end{align*}
	where $\nu $ is the outward pointing unit normal of $\p \Sigma \subset \Sigma$ and    $\overline{\nu }$ is the outward pointing unit normal of $\p \Sigma \subset (\overline{\Omega}\cap \p N)$.
	
	Setting $u=1$ we obtain the second variation of a $\theta$-bubble.
	
	A smooth (warped) $\theta$-bubble is called stable if for any admissible variation (if $X_t$ is a vector field along $\Sigma_t$, then $\Sigma_0=\Sigma$ and $X_t(x) \in T_x\p N$ for $x\in \p N \cap \Sigma_t$),
	\begin{equation*}
		\frac{d^2}{dt^2}\bigg \rvert_{t=0}\Vol(\Sigma_t) \geq 0.
	\end{equation*}
	
\end{lemm}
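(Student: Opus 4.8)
The plan is to differentiate the first--variation identity just obtained and to exploit the warped $\theta$--bubble equations $(\ref{1varWarp})$ to discard almost every term. Running the computation of the preceding Lemma at an arbitrary time $t$, for an admissible variation $X_t$ (i.e.\ $X_t(x)\in T_x\p N$ along $\p N\cap\Sigma_t$), one gets
\begin{equation*}
	\frac{d}{dt}\,\cA_u(\Omega_t)=\int_{\Sigma_t}G_t\,\langle X_t,\nu_{\Sigma_t}\rangle\,d\cH^{n-1}+\int_{\p\Sigma_t}u\,\langle X_t,\overline{\nu_t}\rangle\,B_t\,d\cH^{n-2},
\end{equation*}
where $G_t:=H_{\Sigma_t}u+\nabla_{\nu_{\Sigma_t}}u$ and $B_t:=\langle\nu_t,\overline{\nu_t}\rangle-\cos\theta$. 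Since $\Sigma=\Sigma_0$ is a warped $\theta$--bubble, $G_0\equiv0$ on $\Sigma$ and $B_0\equiv0$ on $\p\Sigma$, so on differentiating at $t=0$ every product in which $G_t$ or $B_t$ remains undifferentiated vanishes, leaving
\begin{equation*}
	\frac{d^2}{dt^2}\Big|_{t=0}\cA_u(\Omega_t)=\int_{\Sigma}\phi\,\tfrac{d}{dt}\big|_{0}G_t+\int_{\p\Sigma}u\,\langle X_0,\overline{\nu}\rangle\,\tfrac{d}{dt}\big|_{0}B_t,\qquad\phi=\langle X_0,\nu_\Sigma\rangle.
\end{equation*}
For the same reason these derivatives are unchanged under reparametrisations of $\Sigma$ and of $\p\Sigma$ (the $\Sigma$--tangential part of $X_0$ enters $\tfrac{d}{dt}|_0 G_t$ only through $\nabla^\Sigma G_0=0$, and the $\p\Sigma$--tangential part of $X_0|_{\p\Sigma}$ only through the tangential derivative of $B_0=0$), so we may compute $\tfrac{d}{dt}|_0 G_t$ as for the normal variation of speed $\phi$, and take $X_0|_{\p\Sigma}$ orthogonal to $\p\Sigma$; admissibility ($\langle X_0,\nu_{\p N}\rangle=0$) together with the frame relations in the normal plane of $\p\Sigma$ at $t=0$ (where $\langle\overline\nu,\nu_\Sigma\rangle=-\sin\theta$) then force $X_0|_{\p\Sigma}=-\tfrac{\phi}{\sin\theta}\overline\nu$, whose $\Sigma$--tangential part is $-\tfrac{\phi\cos\theta}{\sin\theta}\nu$; this is the origin of the $1/\sin\theta$.

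For the interior term, the standard linearisations for a normal variation of speed $\phi$ (in the paper's sign convention) $\tfrac{d}{dt}|_0 H_{\Sigma_t}=-\Delta_\Sigma\phi-(|\sff_\Sigma|^2+\Ric_N(\nu_\Sigma,\nu_\Sigma))\phi$, $\tfrac{d}{dt}|_0\nu_{\Sigma_t}=-\nabla^\Sigma\phi$, $\tfrac{d}{dt}|_0(u\circ F_t)=\phi\,\nabla_{\nu_\Sigma}u$ give
\begin{equation*}
	\tfrac{d}{dt}\big|_0 G_t=-u\Delta_\Sigma\phi-u\big(|\sff_\Sigma|^2+\Ric_N(\nu_\Sigma,\nu_\Sigma)\big)\phi+\phi\,\overline{\nabla}^2 u(\nu_\Sigma,\nu_\Sigma)+H_\Sigma\,\phi\,\nabla_{\nu_\Sigma}u-\langle\nabla^\Sigma u,\nabla^\Sigma\phi\rangle,
\end{equation*}
and the trace identity $\Delta_N u=\Delta_\Sigma u+\overline{\nabla}^2 u(\nu_\Sigma,\nu_\Sigma)+H_\Sigma\nabla_{\nu_\Sigma}u$ collapses the two middle terms to $\phi(\Delta_N u-\Delta_\Sigma u)$; multiplying by $\phi$ and integrating over $\Sigma$ is precisely the first integral of the assertion.

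For the boundary term, observe that $\langle\nu_t,\overline{\nu_t}\rangle=\langle\nu_{\Sigma_t},\nu_{\p N}\rangle$ (both equal the cosine of the contact angle), so, $\nu_{\p N}$ being a fixed field on $\p N$,
\begin{equation*}
	\tfrac{d}{dt}\big|_0 B_t=\big\langle\tfrac{d}{dt}\big|_0\nu_{\Sigma_t},\,\nu_{\p N}\big\rangle+\big\langle\nu_\Sigma,\,\overline{\nabla}_{X_0}\nu_{\p N}\big\rangle-\tfrac{d}{dt}\big|_0(\cos\theta\circ F_t).
\end{equation*}
Here $\tfrac{d}{dt}|_0\nu_{\Sigma_t}=-\nabla^\Sigma\phi+\sff_\Sigma(X_0^{\top},\cdot)^{\sharp}$ with $X_0^{\top}|_{\p\Sigma}=-\tfrac{\phi\cos\theta}{\sin\theta}\nu$; the term $\overline{\nabla}_{X_0}\nu_{\p N}$ along $\p\Sigma$ is governed by $\sff_{\p N}$; and $\tfrac{d}{dt}|_0\cos\theta=-\sin\theta\,\langle\nabla^{\p N}\theta,X_0\rangle=\phi\,\nabla_{\overline\nu}\theta$. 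Expanding $\nu_{\p N}$ in the frame $\{\nu,\nu_\Sigma\}$ and collecting terms gives $\tfrac{d}{dt}|_0 B_t=-\sin\theta\,\nabla_\nu\phi+\phi\big(\sff_{\p N}(\overline\nu,\overline\nu)-\cos\theta\,\sff_\Sigma(\nu,\nu)-\nabla_{\overline\nu}\theta\big)$. Multiplying by $u\,\langle X_0,\overline\nu\rangle=-u\phi/\sin\theta$ and integrating over $\p\Sigma$ yields the second integral, the $-\sin\theta\,\nabla_\nu\phi$ term supplying $\int_{\p\Sigma}u\phi\,\nabla_\nu\phi$. Finally, putting $u\equiv1$ annihilates the $\Delta_N u-\Delta_\Sigma u$ and $\langle\nabla^\Sigma u,\nabla^\Sigma\phi\rangle$ terms, recovering the second variation of a $\theta$--bubble as in \cite{ros1997stability,wu2024capillary}.

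I expect the boundary computation to be the main obstacle: the genuinely ``capillary'' step of evaluating $\tfrac{d}{dt}|_0\nu_{\Sigma_t}$ and $\overline{\nabla}_{X_0}\nu_{\p N}$ and unwinding the frame relations to reach the stated $\tfrac{d}{dt}|_0 B_t$ requires careful bookkeeping with the ambient Levi--Civita connection and with the orthogonality relations holding at $t=0$ (that $X_0$ is tangent to $\p N$, that $\langle\nu,\overline\nu\rangle=\cos\theta$, and the relations among $\nu,\overline\nu,\nu_\Sigma,\nu_{\p N}$ in the normal plane of $\p\Sigma$), rather than any new idea; the interior part is a routine combination of the mean--curvature variation formulas with the trace identity.
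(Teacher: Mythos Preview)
Your proposal is correct and follows essentially the same approach as the paper: differentiate the first-variation identity, use the critical-point equations $G_0=0$, $B_0=0$ to discard undifferentiated terms and reduce to normal/$\overline\nu$-directed variations, then apply the standard linearisation of $H_{\Sigma_t}$ together with the trace identity $\Delta_N u=\Delta_\Sigma u+\overline\nabla^2 u(\nu_\Sigma,\nu_\Sigma)+H_\Sigma\nabla_{\nu_\Sigma}u$ for the interior, and carry out the capillary boundary computation of $\partial_t B_t$ in the normal plane of $\p\Sigma$. The only cosmetic difference is that the paper splits $\partial_t\langle\nu_t,\overline{\nu_t}\rangle$ into $\partial_t\nu_t\cdot\overline{\nu_t}+\nu_t\cdot\partial_t\overline{\nu_t}$ and computes each piece directly, whereas you first rewrite $\langle\nu_t,\overline{\nu_t}\rangle=\langle\nu_{\Sigma_t},\nu_{\p N}\rangle$; both routes yield the same $\partial_t B_0=-\sin\theta\,\nabla_\nu\phi+\phi(\sff_{\p N}(\overline\nu,\overline\nu)-\cos\theta\,\sff_\Sigma(\nu,\nu)-\nabla_{\overline\nu}\theta)$.
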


\begin{proof}
	We compute the integrand over $\Sigma$ and first show it's enough to compute the second variation over $X_t^{\perp}=(X_t \cdot \nu_{\Sigma})\nu_{\Sigma}$, let $X^T_t:=X_t-X^{\perp}_t$,
	\begin{align*}
		Q_1:&=\int_{\Sigma}\frac{d}{dt}\bigg\rvert_{t=0}(uH_{\Sigma}\cdot X_t+\nabla_{X_t^{\perp}} u)\\
		&= \int_{\Sigma}\nabla_{X^T_t}(uH_{\Sigma}\cdot X_t+\nabla_{X_t^{\perp}} u)+\nabla_{X^{\perp}_t}(uH_{\Sigma}\cdot X_t+\nabla_{X_t^{\perp}} u)\\
		&\stackrel{(*)}{=}\int_{\Sigma}\dive_{\Sigma_t} ((uH_{\Sigma}\cdot X_t+\nabla_{X_t^{\perp}} u)\cdot X^T_t)+\nabla_{X^{\perp}_t}(uH_{\Sigma}\cdot X_t+\nabla_{X_t^{\perp}} u)\\
		&\stackrel{(*)}{=}\int_{\Sigma} \nabla_{X^{\perp}_t}(uH_{\Sigma}\cdot X_t+\nabla_{X_t^{\perp}} u),
	\end{align*}
	where in the place where $(*)$ is marked we used the first variation $uH_{\Sigma}\cdot X_t+\nabla_{X_t^{\perp}} u=0$.
	
	We now compute $Q_1$ using $X_t=X_t^{\perp}=\phi_t\cdot \nu_t$ with $\nu_0=\nu_{\Sigma}$, and evaluate at $t=0$ at each step,
	\begin{align*}
		Q_1&=\int_{\Sigma}\p_t(uH_{\Sigma}+\nabla_{\nu_{\Sigma}}u)\phi\\
		&=\int_{\Sigma} \phi^2\nabla_{\nu_{ \Sigma}}u H_{\Sigma}+u\phi\p_t H_{\Sigma}+\phi^2\nabla_{\nu_{\Sigma}}\nabla_{\nu_{\Sigma}}u\\
		&=\int_{\Sigma}-u\phi\Delta\phi-u\phi^2(\Ric_N(\nu_{\Sigma},\nu_{\Sigma})+|\sff_{\Sigma}|^2)+\phi^2\nabla_{\nu_{\Sigma}}uH_{\Sigma}+\phi^2(\nabla^2u(\nu_{\Sigma},\nu_{\Sigma})+\nabla_{\nu_{\Sigma}}\nu_{\Sigma}\cdot \nabla^{\Sigma}u)\\
		&\stackrel{(**)}{=}\int_{\Sigma}-u\phi\Delta\phi-u\phi^2(\Ric_N(\nu_{\Sigma},\nu_{\Sigma})+|\sff_{\Sigma}|^2)+\phi^2(\Delta_N u-\Delta_{\Sigma}u)-\phi^2\nabla^{\Sigma}\phi\cdot \nabla^{\Sigma}u
	\end{align*}
	where in $(**)$ we used that $\Delta_{N}u-\Delta_{\Sigma}u=\nabla^2u(\nu_{\Sigma},\nu_{\Sigma})+\nabla_{\nu_{\Sigma}}uH_{\Sigma}$ and $\phi\nabla_{\nu_{\Sigma}}\nu_{\Sigma}=-\nabla^{\Sigma}\phi$.
	
	We now compute the integrand over $\p \Sigma$. One can show that it's enough to consider variations $X_t=\varphi\cdot \overline{\nu}$ with $\phi=-\varphi\sin\theta$ (at $t=0$) for this computation similar to the interior case in $Q_1$,
	\begin{align*}
		Q_2:&=\int_{\p \Sigma} u\varphi\frac{d}{dt}\bigg\rvert_{t=0}(\nu_t \cdot \overline{\nu_t}-\cos\theta)\\
		&=\int_{\p \Sigma}u\varphi(\p_t\nu_t \cdot \overline{\nu_t}+\nu_t\cdot\p_t\overline{\nu_t}+\varphi \sin\theta\nabla_{\overline{\nu}}\theta)\\
		&=\int_{\p \Sigma}\frac{-u\phi}{\sin\theta}(\p_t\nu_t \cdot \overline{\nu_t}+\nu_t\cdot\p_t\overline{\nu_t}+\varphi \sin\theta\nabla_{\overline{\nu}}\theta)
	\end{align*} 
	We compute $\p_t\nu_{t}\cdot \overline{\nu_t}$ using $\overline{\nu_t}=\cos\theta\nu_t-\sin\theta_t\nu_{\Sigma_t}$. Note only at $t=0$, $ \theta_0=\theta$ the prescribed function over $\p N$,
	\begin{align*}
		\p_t \nu_t\cdot \overline{\nu_t}&=\p_t\nu_t\cdot(\cos\theta_t\nu_t-\sin\theta_t\nu_{\Sigma_t})\\
		&=-\sin\theta_t(\p_t\nu_t\cdot \nu_{\Sigma_t})\\
		&=-\varphi\sin\theta_t(\cos\theta_t\nabla_{\nu_t}\nu_t\cdot \nu_{\Sigma_t}-\sin\theta_t\nabla_{\nu_{\Sigma_t}}\nu_t \cdot \nu_{\Sigma_t})	\\
		&= -\phi \cos\theta \sff_{\Sigma}(\nu,\nu)+\varphi\sin^2\theta(-\nu_t\cdot \nabla_{\nu_{\Sigma_t}}\nu_{\Sigma_t})\\
		&= -\phi \cos\theta \sff_{\Sigma}(\nu,\nu)+\sin\theta(-\nu\cdot \nabla^{\Sigma}\phi) \quad \text{at } t=0.
	\end{align*}
	And similarly,
	\begin{align*}
		\p_t\overline{\nu_t}\cdot \nu_{t}&=\p_t\overline{\nu_t}\cdot (\cos\theta_t\overline{\nu_t}+\sin\theta_t\nu_{\p N})\\
		&=\sin\theta (\p_t \overline{\nu_t}\cdot \nu_{\p N})\\
		&=-\varphi\sin\theta \sff_{\p N}(\overline{\nu},\overline{\nu})\\
		&=\phi \sff_{\p N}(\overline{\nu},\overline{\nu})
	\end{align*}
	In total we get,
\begin{equation*}
	Q_2=\int_{\p \Sigma} u\phi\nabla_{\nu}\phi -\frac{u\phi^2}{\sin\theta} (\sff_{\p N}(\overline{\nu},\overline{\nu})-\cos\theta \sff_{\Sigma}(\nu,\nu)-\nabla_{\overline{\nu}}\theta)
\end{equation*}
This finishes the computation.
\end{proof}

Using Schoen-Yau rearrangement we can relate the interior terms in the second variation with scalar curvature.
Using a computation in \cite{li2020polyhedron}, we can rearrange the boundary terms to relate with the mean curvature of the ambient manifold.

\begin{lemm}[Rearranged Second Variation]
	If $\Omega$ is a smooth warped $\theta$-bubble in $(N^n,\p N)$ that intersect $\p N$ transversally, denote $\p^*\Omega=\Sigma$ and given a choice of unit normal $\nu_{\Sigma}$, then	let $X_t$ be a vector field along $\Sigma_t$ with $\Sigma_0=\Sigma$ and $X_t(x) \in T_x\p N$ for $x\in \p N \cap \Sigma_t$, we denote $X_0 \cdot \nu_{\Sigma}=:\phi$, then
	\begin{align*}
		\frac{d^2}{dt^2}\bigg \rvert_{t=0}\Vol(\Sigma_t)&=  \int_{\Sigma} -\dive_{\Sigma}(u\nabla\phi)\phi -\frac{1}{2}u\phi^2(R_N-R_{\Sigma}+|\sff_{\Sigma}|^2)\\
		&+\int_{\Sigma}-\frac{|\nabla_{\nu_{\Sigma}}u|^2\phi^2}{2u}+\phi^2(\Delta_N u-\Delta_{\Sigma}u)\\
		+&\int_{\p \Sigma} u\phi \nabla_{\nu }\phi-\frac{u\phi^2}{\sin\theta}\left(H_{\p N}-\cos\theta H_{\Sigma}- \nabla_{\overline{\nu }} \theta \right)+u\phi^2\sff_{\nu}(\p \Sigma)
	\end{align*}
	where $\nu $ is the outward pointing unit normal of $\p \Sigma \subset \Sigma$,    $\overline{\nu }$ is the outward pointing unit normal of $\p \Sigma \subset (\overline{\Omega}\cap \p N)$, and $\sff_{\nu}(\p \Sigma)=-\sum_{i=1}^{n-2}\langle\nabla_{e_i}e_i,\nu\rangle$ for an orthonormal basis $(e_i)_{i=1}^{n-2}$ of $\p \Sigma$.
	
	Setting $u=1$ we obtain the rearranged second variation of a $\theta$-bubble.
\end{lemm}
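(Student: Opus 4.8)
The plan is to massage the two integrands produced by the previous lemma separately: the interior integrand is rewritten by the traced Gauss equation together with the first variation identity (this is the ``Schoen--Yau rearrangement''), while the boundary integrand is rewritten by a pointwise orthonormal-frame computation on $\p\Sigma$ in the spirit of \cite{li2020polyhedron}. No analytic input is needed beyond the two identities already established.

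\emph{Interior terms.} First I would combine the two Laplacian-type pieces via the product rule $\dive_{\Sigma}(u\nabla\phi)=\langle\nabla^{\Sigma}u,\nabla^{\Sigma}\phi\rangle+u\,\Delta\phi$, so that $-u\phi\,\Delta\phi-\phi\langle\nabla^{\Sigma}\phi,\nabla^{\Sigma}u\rangle=-\dive_{\Sigma}(u\nabla\phi)\,\phi$. Next I apply the traced Gauss equation for $\Sigma^{n-1}\hookrightarrow N^n$ with unit normal $\nu_{\Sigma}$, namely $2\Ric_N(\nu_{\Sigma},\nu_{\Sigma})=R_N-R_{\Sigma}+H_{\Sigma}^2-|\sff_{\Sigma}|^2$, which gives $\Ric_N(\nu_{\Sigma},\nu_{\Sigma})+|\sff_{\Sigma}|^2=\tfrac12\bigl(R_N-R_{\Sigma}+|\sff_{\Sigma}|^2\bigr)+\tfrac12 H_{\Sigma}^2$. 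Then I substitute the first variation identity \eqref{1varWarp}, $H_{\Sigma}=-\nabla_{\nu_{\Sigma}}u/u$, so $H_{\Sigma}^2=|\nabla_{\nu_{\Sigma}}u|^2/u^2$ and $-u\phi^2\bigl(\Ric_N(\nu_{\Sigma},\nu_{\Sigma})+|\sff_{\Sigma}|^2\bigr)=-\tfrac12 u\phi^2\bigl(R_N-R_{\Sigma}+|\sff_{\Sigma}|^2\bigr)-\tfrac{|\nabla_{\nu_{\Sigma}}u|^2\phi^2}{2u}$. The term $\phi^2(\Delta_N u-\Delta_{\Sigma}u)$ is carried through unchanged, and together these reproduce exactly the interior part of the asserted formula.

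\emph{Boundary terms.} Here I would argue pointwise on $\p\Sigma$, noting that $\sin\theta>0$ along $\p\Sigma$ because the $\theta$-bubble meets $\p N$ only where $-1<\cos\theta<1$. Fix $p\in\p\Sigma$ and an orthonormal basis $(e_i)_{i=1}^{n-2}$ of $T_p\p\Sigma$; the normal $2$-plane $(T_p\p\Sigma)^{\perp}\subset T_pN$ carries the two orthonormal frames $\{\nu,\nu_{\Sigma}\}$ and $\{\overline{\nu},\nu_{\p N}\}$, related as in the first variation computation by $\overline{\nu}=\cos\theta\,\nu-\sin\theta\,\nu_{\Sigma}$ and $\nu_{\p N}=\sin\theta\,\nu+\cos\theta\,\nu_{\Sigma}$. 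Since $\{e_1,\dots,e_{n-2},\overline{\nu}\}$ is an orthonormal frame of $T_p\p N$ and $\{e_1,\dots,e_{n-2},\nu\}$ one of $T_p\Sigma$, we get $H_{\p N}-\sff_{\p N}(\overline{\nu},\overline{\nu})=-\sum_i\langle\overline{\nabla}_{e_i}e_i,\nu_{\p N}\rangle$; expanding $\nu_{\p N}$ and using $-\sum_i\langle\overline{\nabla}_{e_i}e_i,\nu\rangle=\sff_{\nu}(\p\Sigma)$ together with $-\sum_i\langle\overline{\nabla}_{e_i}e_i,\nu_{\Sigma}\rangle=H_{\Sigma}-\sff_{\Sigma}(\nu,\nu)$, this equals $\sin\theta\,\sff_{\nu}(\p\Sigma)+\cos\theta\bigl(H_{\Sigma}-\sff_{\Sigma}(\nu,\nu)\bigr)$. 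Rearranging gives $\sff_{\p N}(\overline{\nu},\overline{\nu})-\cos\theta\,\sff_{\Sigma}(\nu,\nu)=H_{\p N}-\cos\theta\,H_{\Sigma}-\sin\theta\,\sff_{\nu}(\p\Sigma)$; multiplying by $-\tfrac{u\phi^2}{\sin\theta}$ converts the boundary integrand of the previous lemma into the asserted one, producing the extra summand $+u\phi^2\sff_{\nu}(\p\Sigma)$, while $u\phi\nabla_{\nu}\phi$ and $\nabla_{\overline{\nu}}\theta$ are untouched.

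\emph{Main obstacle.} There is no genuine difficulty; the only thing requiring care is sign bookkeeping — reconciling the convention $\sff_M(Y,Z)=-\langle\overline{\nabla}_ZY,\nu_M\rangle$ with the scalar-curvature form of the Gauss equation, and keeping the orientations of the two frames of the normal $2$-plane (and the sign of $\sin\theta$) consistent with the first variation formulas. Finally, putting $u\equiv 1$ throughout (so $\nabla_{\nu_{\Sigma}}u=0$ and $H_{\Sigma}=0$) recovers the rearranged second variation for an ordinary $\theta$-bubble.
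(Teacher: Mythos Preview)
Your proposal is correct and follows essentially the same approach as the paper: both use the traced Gauss equation for the interior Schoen--Yau rearrangement, the identity $H_{\p N}=\sff_{\p N}(\overline{\nu},\overline{\nu})+\cos\theta\,H_{\Sigma}-\cos\theta\,\sff_{\Sigma}(\nu,\nu)+\sin\theta\,\sff_{\nu}(\p\Sigma)$ for the boundary, and the first variation $uH_{\Sigma}+\nabla_{\nu_{\Sigma}}u=0$ to eliminate $H_{\Sigma}^2$. The only cosmetic differences are that the paper records an intermediate formula still containing $H_{\Sigma}^2$ before invoking the first variation, and cites \cite{li2020polyhedron} for the boundary identity rather than deriving it from the frame decomposition as you do.
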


\begin{proof}
We first have,
\begin{align}
	\frac{d^2}{dt^2}\bigg \rvert_{t=0}\Vol(\Sigma_t)&= \int_{\Sigma} -u\phi\Delta\phi -\frac{1}{2}u\phi^2(R_N-R_{\Sigma}+|\sff_{\Sigma}|^2+H^2_{\Sigma})\label{2varWarp}\\
		&+\int_{\Sigma}\phi^2(\Delta_N u-\Delta_{\Sigma}u)-\phi\langle\nabla^{\Sigma}\phi,\nabla^{\Sigma}u\rangle \nonumber\\
		+&\int_{\p \Sigma} u\phi \nabla_{\nu }\phi-\frac{u\phi^2}{\sin\theta}\left(H_{\p N}-\cos\theta H_{\Sigma}- \nabla_{\overline{\nu }} \theta \right)+u\phi^2\sff_{\nu}(\p \Sigma) \nonumber
\end{align}
The interior rearrangement follows from the Gauss-Codazzi equation,
	$$R_N=R_{\Sigma}+2\Ric(\nu_{\Sigma},\nu_{\Sigma})+|\sff_{\Sigma}|^2-H^2_{\Sigma}.$$
The boundary rearrangement follows from the following analogous computation of  equation (3.8) in \cite{li2020polyhedron},
	\begin{equation*}
		H_{\p N}=\sff_{\p N}(\overline{\nu},\overline{\nu})+\cos\theta H_{\Sigma}-\cos\theta \sff_{\Sigma}(\nu,\nu)+\sin\theta\sff_{\nu}(\p \Sigma).
	\end{equation*}
This gives the equality (\ref{2varWarp}).

The desired equality follows from first variation $uH_{\Sigma}+\nabla_{\nu_{\Sigma}}u=0$.
\end{proof}

\section{Inductive Process}

We now start with the inductive process of proving diameter and circumference bounds for $\theta$-bubbles. From this section onwards, any $\theta$-bubble is assume to intersect with the ambient manifold's boundary transversally.

\begin{lemm}\label{Ind2d}
	Let $(\Sigma^2,\p \Sigma)$ be a compact surface with a positive smooth function $u:\Sigma \rightarrow \RR_+$ such that,
	\begin{equation*}
		\Delta_{\Sigma} u \leq K_{\Sigma}u+\frac{|\nabla_{\Sigma} u|^2}{2u}, \quad \nabla_{\nu} u \geq a_0 u-\kappa_{\p \Sigma} u,
	\end{equation*}
	for some $a_0>0$, where $K_{\Sigma}$ is the Gauss curvature of $\Sigma$ and $k_{\p \Sigma}$ the geodesic curvature (equal to the mean curvature), and $\nu $ the outward pointing unit normal of $\p \Sigma \subset \Sigma$;
	then for any $x\in \Sigma$, 
	\begin{equation*}
		d_{\Sigma}(x,\p \Sigma)\leq \frac{2}{a_0}.
	\end{equation*}
\end{lemm}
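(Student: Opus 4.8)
The plan is to argue by contradiction, tracing the ``$u$-weighted mean curvature'' of the level sets of the distance function $\rho(x):=d_\Sigma(x,\p\Sigma)$ along the inward normal geodesics from $\p\Sigma$: the boundary hypothesis $\nabla_\nu u\ge a_0u-\kappa_{\p\Sigma}u$ furnishes an initial value $\le-a_0$, the interior hypothesis $\Delta_\Sigma u\le K_\Sigma u+\tfrac{|\nabla u|^2}{2u}$ together with the Riccati equation for the distance function furnishes an evolution inequality of Riccati type, and these combine to force a focal point within distance $2/a_0$.

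Suppose $d_\Sigma(p,\p\Sigma)>2/a_0$ for some $p$; passing to the component of $\Sigma$ containing $p$ we may assume $\Sigma$ connected, and we set $L:=\max_\Sigma\rho>2/a_0$. Use Fermi coordinates: for $q\in\p\Sigma$ the curve $t\mapsto\exp_q(-t\nu(q))$ is the unit-speed normal geodesic pointing into $\Sigma$, one has $\rho\equiv t$ along it for $t$ below the cut distance, and the level sets $\Gamma_t=\{\rho=t\}$ foliate the regular region. Let $H:=\Delta_\Sigma\rho|_{\Gamma_t}$ be the scalar mean curvature of $\Gamma_t$ with respect to $\nabla\rho$; in dimension two the focalization identity reads $\p_tH+H^2+K_\Sigma=0$ along the geodesics. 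Put
\[
\widetilde H\;:=\;H+\frac{\nabla_{\nabla\rho}u}{u}\;=\;\Delta_\Sigma\rho+\nabla_{\nabla\rho}\log u ,
\]
the weighted mean curvature of the foliation. Using the splitting $\Delta_\Sigma u=u_{tt}+Hu_t+\Delta_{\Gamma_t}u$ (with $u_t:=\nabla_{\nabla\rho}u$), inserting $\Delta_\Sigma u\le K_\Sigma u+\tfrac{|\nabla u|^2}{2u}$, eliminating $K_\Sigma$ via the Riccati identity, and completing the square in $u_t/u$, one obtains the pointwise evolution inequality along the foliation
\[
\p_t\widetilde H+\tfrac12\widetilde H^2+\tfrac12 H^2+\frac{2\,\Delta_{\Gamma_t}(u^{1/2})}{u^{1/2}}\;\le\;0 .
\]

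At $\p\Sigma$ we have $\nabla\rho=-\nu$, so $u_t|_{\p\Sigma}=-\nabla_\nu u$ and $H|_{\p\Sigma}=\Delta_\Sigma\rho|_{\p\Sigma}=-\kappa_{\p\Sigma}$; hence $\widetilde H|_{\p\Sigma}=-\nabla_\nu u/u-\kappa_{\p\Sigma}\le-a_0$ by the boundary hypothesis. This is the step where $a_0$ enters. Discarding $\tfrac12 H^2\ge0$ and, at a point of $\Gamma_t$ where $\Delta_{\Gamma_t}(u^{1/2})\ge0$, the tangential term, one is left with $\p_t\widetilde H+\tfrac12\widetilde H^2\le0$; running this through $m(t):=\min_{\Gamma_t}\widetilde H$ yields $m'\le-\tfrac12 m^2$ in the barrier sense, with $\limsup_{t\to0^+}m(t)\le-a_0<0$. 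Since $m<0$, the function $-1/m$ then has derivative $\le-\tfrac12$ and initial value $\le 1/a_0$, so it reaches $0$---that is, $m\to-\infty$---by time $t=2/a_0$. Hence the normal geodesics from $\p\Sigma$ cannot remain minimizing beyond distance $2/a_0$, so $\rho\le2/a_0$ on $\Sigma$, contradicting $\rho(p)>2/a_0$.

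The delicate point---and the main obstacle---is that the tangential term $\Delta_{\Gamma_t}(u^{1/2})$ in the evolution inequality need not be nonnegative at a minimum of $\widetilde H|_{\Gamma_t}$, and one must also contend with the cut locus of $\p\Sigma$. These are exactly what the $\theta$-bubble / $\mu$-bubble technology is built to handle, and this Lemma is the one-dimensional base case of the inductive scheme of this section: one replaces the distance-function foliation by an honest \emph{warped $\mu$-bubble} $\gamma\subset\Sigma$ with weight $u$ and potential $h=h(\rho)$, where $h$ solves $h'\le-\tfrac12 h^2$ with $h=-a_0$ on $\p\Sigma$ (so $h\to-\infty$ by $\rho=2/a_0$) and $\gamma$ is confined by the standard barrier comparison. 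Existence and regularity of such a minimizer follow the lines of Theorems~\ref{MPtheta}--\ref{ExiMinWarp}, whose structural hypothesis $\nabla_\nu u+u\kappa_{\p\Sigma}>0$ holds here because $\nabla_\nu u+u\kappa_{\p\Sigma}\ge a_0u>0$. For the minimizer, the weighted mean curvature of $\gamma$ equals $h(\rho)$ identically, so that the Schoen--Yau-rearranged second variation (the Rearranged Second Variation above), combined with the interior hypothesis $\Delta_\Sigma u\le K_\Sigma u+\tfrac{|\nabla u|^2}{2u}$ and the Gauss--Bonnet theorem on the region cut off by $\gamma$, reproduces the same Riccati inequality with no stray tangential terms, and the comparison above yields the bound $2/a_0$ and the contradiction.
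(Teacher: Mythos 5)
Your overall plan is correct and does match the paper's strategy: argue by contradiction, build a prescribing function $h$ from the (mollified) distance to $\p\Sigma$ that blows up before distance $2/a_0$ (your $h(\rho)$ with $h'\le-\tfrac12h^2$ and $h(0)=-a_0$ is, up to a sign convention, the paper's $h(x)=2/(\tfrac{2}{a_0}+\epsilon-d_\Sigma(x,\p\Sigma))$ satisfying $h^2-2|\nabla^\Sigma h|>0$), minimize the warped functional $\cA_h(\Omega)=\int_{\p\Omega}u-\int_\Sigma hu(\chi_\Omega-\chi_{\Omega_0})$ over sets avoiding a neighborhood of $\p\Sigma$, and use $\nabla_\nu u\ge a_0u-\kappa_{\p\Sigma}u$ in the barrier argument (Theorem \ref{ExiMinWarp}) to keep the minimizer away from $\p\Sigma$. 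Your diagnosis of why the naive Riccati comparison along distance level sets does not close (cut locus, and the tangential term $\Delta_{\Gamma_t}(u^{1/2})$ having no sign at a minimum of $\widetilde H$ over $\Gamma_t$) is also correct, and the evolution inequality you derive is algebraically right.

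The step where you close the argument, however, is not what the paper does and, as stated, does not work. You invoke ``the Gauss--Bonnet theorem on the region cut off by $\gamma$'' and assert that the second variation ``reproduces the same Riccati inequality \dots and the comparison above yields the bound.'' Neither happens here. The $\mu$-bubble $\Gamma=\p\Omega$ is a \emph{curve} inside the surface $\Sigma$, so $\Gamma$ has no intrinsic curvature to feed to Gauss--Bonnet, and the paper never integrates the interior hypothesis over the region $\Omega$; Gauss--Bonnet enters only one dimension up, in Lemma~\ref{Ind3d}, where the $\theta$-bubble is a surface and one needs the perimeter bound $|\p\Sigma|\le 2\pi/a_0$. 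Moreover, a single stable minimizer gives you no ODE to ``run'': there is no foliation and hence no comparison argument at this stage. The actual contradiction in the paper is immediate and pointwise: write the second variation of $\cA_h$ at the stable minimizer $\Gamma$, substitute the first-variation identity $H_\Gamma=h-\nabla_{\nu_\Gamma}u/u$, use $\Ric_\Sigma=\tfrac12R_\Sigma=K_\Sigma$ together with the interior hypothesis on $u$, invoke the strict inequality $h^2-2|\nabla^\Sigma h|>0$, and then choose the test function $\phi=u^{-1/2}$. With that choice the remaining integrand collapses to $-|\nabla_\Gamma u|^2/(4u^2)$, so the second variation is \emph{strictly} negative, contradicting stability. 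Your proposal never exhibits this test function, nor any mechanism by which the ``comparison'' produces a sign on the second variation, and that is precisely the crux the lemma needs.
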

\begin{proof}
We summarize the proof in \cite{wu2024capillary} and the reader can refer to \cite{wu2024capillary} for a more detailed proof.

Assume that there is a point $p\in \Sigma$ with $d_{\Sigma}(p,\p \Sigma)=\frac{2}{a_0}+2\epsilon$ for some $\epsilon>0$.
Let $$h(x)=\frac{2}{\frac{2}{a_0}+\epsilon-d_{\Sigma}(x,\p \Sigma)}, \quad \text{if } d_{\Sigma}(x,\p\Sigma)<\frac{2}{a_0}+\epsilon,$$
where we denote $d_{\Sigma}(x,\p \Sigma)$ the mollified distance function to the boundary and that $h^2-2|\nabla^{\Sigma}h|>0$. Let $\Omega_0$ be a fixed open set containing $\{x\in \Sigma, d_{\Sigma}(x,\p \Sigma)\geq \frac{2}{a_0}+\epsilon\}$.
	
We solve for the following warped $\mu$-bubble problem over $\Sigma$ for Caccioppoli sets in $\Sigma$ containing a fixed open neighborhood of $p$, and be disjoint from a fixed open neighborhood of $\p \Sigma$,
	\begin{equation*}
		\cA_h(\Omega)=\int_{\p \Omega}u -\int_{\Sigma} hu (\chi_{\Omega}-\chi_{\Omega_0}).
	\end{equation*}
	A maximum principle argument of first variation in \cite{wu2024capillary} shows that a smooth minimizer must exists. This is where we need the boundary mean convexity assumption $\p_{\nu} u \geq a_0u-\kappa_{\p \Sigma} u$.
	
	Then the first variation of the minimizer $\Gamma=\p \Omega$, along a smooth variation $X=\phi\nu_{\Gamma}$ is the following,
	\begin{equation}
		\frac{d}{dt}\cA_h(\Gamma)=\int_{\Gamma}\phi(uH_{\Gamma}+\nabla_{\nu_{\Gamma}}u)- hu\phi.
	\end{equation}
	And the second variation is,
	\begin{align*}
		\frac{d^2}{dt^2}\cA_h(\Gamma)&=\int_{\Gamma}\phi u(-\Delta_{\Gamma}\phi-\phi(\Ric_{\Sigma}(\nu,\nu)+|\sff_{\Gamma}|^2))+\phi^2 H_{\Sigma}\nabla_{\nu_{\Gamma}}u\\
		&+\int_{\Gamma}\phi^2 \nabla_{\nu_{\Gamma}}\nabla_{\nu_{\Gamma}}u-\phi^2\nabla_{\nu_{\Gamma}}(hu)\\
		& \stackrel{(\star 1)}{=} \int_{\Gamma}\phi u(-\Delta_{\Gamma}\phi-\phi(\frac{1}{2}R_{\Sigma}+|\sff_{\Gamma}|^2))+\phi^2 H_{\Sigma}\nabla_{\nu_{\Gamma}}u\\
		&+\int_{\Gamma}\phi^2 \nabla^2 u(\nu_{\Gamma},\nu_{\Gamma})+\phi^2\nabla^{\Gamma}u \cdot \nabla_{\nu_{\Gamma}}\nu_{\Gamma}-\phi^2\nabla_{\nu_{\Gamma}}(hu)\\
		& \stackrel{(\star 2)}{=}\int_{\Gamma}\phi (-\dive_{\Gamma}(u\nabla_{\Gamma}\phi)-u\phi(\frac{1}{2}R_{\Sigma}+|\sff_{\Gamma}|^2))+\phi^2(\Delta_{\Sigma}u-\Delta_{\Gamma}u)-\phi^2\nabla_{\nu_{\Gamma}}(hu)\\
		& \stackrel{(\star 3)}{\leq} \int_{\Gamma} \phi(-\dive_{\Gamma}(u\nabla_{\Gamma}\phi))+\frac{|\nabla_{^{\Sigma}}u|^2}{2u}\phi^2-u\phi^2|\sff_{\Gamma}|^2-\phi^2\Delta_{\Gamma}u-\phi^2\nabla_{\nu_{\Gamma}}(hu)\\
		& \stackrel{(\star 4)}{\leq}\int_{\Gamma} \phi(-\dive_{\Gamma}(u\nabla_{\Gamma}\phi))+\frac{|\nabla_{^{\Sigma}}u|^2}{2u}\phi^2-\frac{u\phi^2}{2}(h-\frac{\nabla_{\nu_{\Gamma}} u}{u})^2-\phi^2\Delta_{\Gamma}u-\phi^2\nabla_{\nu_{\Gamma}}(hu)\\
		&=\int_{\Gamma}u(\nabla_{\Gamma}\phi)^2+\frac{|\nabla_{^{\Sigma}}u|^2}{2u}\phi^2-\frac{u\phi h^2}{2}-\frac{(\nabla_{\nu_{\Gamma}}u)^2}{2u}-\phi^2\Delta_{\Gamma}u-\phi^2u\nabla_{\nu_{\Gamma}}h\\
		&=\int_{\Gamma}\frac{|\nabla_{\Gamma}u|^2}{2u}\phi^2-\phi^2\Delta_{\Gamma}u-\frac{\phi^2u}{2}(h^2+2\nabla_{\nu_{\Gamma}}h)+u(\nabla_{\Gamma}\phi)^2\\
		&\stackrel{(\star 5)}{<}\int_{\Gamma}\frac{|\nabla_{\Gamma}u|^2}{2u}\phi^2-\phi^2\Delta_{\Gamma}u+u(\nabla_{\Gamma}\phi)^2\\
		&=\int_{\Gamma}\frac{|\nabla_{\Gamma}u|^2}{2u}\phi^2+\nabla_{\Gamma}u\nabla_{\Gamma}\phi^2+u(\nabla_{\Gamma}\phi)^2.
	\end{align*}
	The computation of warped $\mu$-bubble here works in general dimensions with small adaptions. 
	In $(\star 1)$ we used for surfaces the Ricci curvature is one half of the scalar curvature; in general dimensions one can use the Gauss-Codazzi equation.
	In $(\star 2)$ we used that for a normal variation with speed $\phi$, $\p_t \nu_{\Gamma}=-\nabla^{\Gamma}\phi$. In $(\star 3)$ we used the interior NNSC assumption $\Delta_{\Sigma} u \leq K_{\Sigma}u+\frac{|\nabla_{\Sigma} u|^2}{2u}$.
	In $(\star 4)$ we used the first variation $|\sff_{\Gamma}|^2=(H_{\Gamma})^2=(h-\frac{\nabla_{\nu_{\Gamma}}u}{u})^2$ and $|\sff_{\Gamma}|^2 \geq \frac{|\sff_{\Gamma}|^2}{2}$. Lastly in $(\star 5)$ we used $h^2-2|\nabla^{\Sigma}h|>0$.

	We now choose $\phi^2u=1$ to get,
	\begin{equation*}
		\frac{|\nabla_{\Gamma}u|^2}{2u}\phi^2+\nabla_{\Gamma}u\nabla_{\Gamma}\phi^2+u(\nabla_{\Gamma}\phi)^2=\frac{|\nabla_{\Gamma} u|^2}{2u^2}-\frac{|\nabla_{\Gamma}u|^2}{u^2}+\frac{|\nabla_{\Gamma}u|^2}{4u^2}=-\frac{|\nabla_{\Gamma}u|^2}{4u^2},
	\end{equation*}
	which gives us a contradiction to the stability inequality.
\end{proof}

\begin{lemm}\label{Ind3d}
	Let $(\Sigma^2, \p \Sigma)$ be a stable $u$-warped $\theta$-bubble in a 3-manifold $(M^3,\p M)$ with,
	\begin{equation*}
		\Delta_M u\leq \frac{R_M}{2}u+\frac{|\nabla^M u|^2}{2u},
	\end{equation*}
	and over the boundary $\p \Sigma$ we have,
	\begin{equation}\label{2dWarpBC}
		 \nabla_{\nu_{\p M}}u+u(H_{\p M}-\nabla_{\overline{\nu}}\theta)\geq (\sin\theta) a_0u>0
	\end{equation}
	then we have $|\p\Sigma|\leq \frac{2\pi}{a_0}$ and $d_{\Sigma}(x,\p\Sigma)\leq \frac{2}{a_0}$ for all $x\in \Sigma$. 
\end{lemm}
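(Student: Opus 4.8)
The plan is to show that $\Sigma$, equipped with a positive weight $w$ manufactured from $u|_\Sigma$ and the first eigenfunction of the warped stability operator of $\Sigma\subset M$, satisfies the hypotheses of Lemma~\ref{Ind2d} \emph{with the same constant $a_0$}; the diameter estimate is then immediate, and the circumference estimate follows from the same $w$ via Gauss--Bonnet. This is the surface version of the inheritance phenomenon: a stable warped $\theta$-bubble inside a manifold obeying the warped NNSC and mean-convex-boundary conditions again obeys the weighted forms of those conditions.

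The first step is to rearrange the second variation of the warped $\theta$-bubble $\Sigma^2\subset M^3$. Integrating by parts the term $-\dive_\Sigma(u\nabla\phi)\phi$ (which cancels the boundary term $u\phi\nabla_\nu\phi$), inserting the hypothesis $\Delta_M u\le\tfrac{R_M}{2}u+\tfrac{|\nabla^M u|^2}{2u}$ together with $|\nabla^M u|^2=|\nabla^\Sigma u|^2+|\nabla_{\nu_\Sigma}u|^2$, and using $R_\Sigma=2K_\Sigma$, the stability inequality $\tfrac{d^2}{dt^2}\big|_0\Vol(\Sigma_t)\ge0$ should collapse --- the $R_M$- and $|\nabla_{\nu_\Sigma}u|^2$-terms cancelling in pairs --- to
\begin{equation*}
0\le Q(\phi):=\int_\Sigma u\bigl(|\nabla^\Sigma\phi|^2+V\phi^2\bigr)+\int_{\partial\Sigma}u\,B\,\phi^2,\qquad V=K_\Sigma-\tfrac12|\sff_\Sigma|^2+\tfrac{|\nabla^\Sigma u|^2}{2u^2}-\tfrac{\Delta_\Sigma u}{u},
\end{equation*}
valid for every smooth $\phi$ on $\Sigma$, where $B=\sff_\nu(\partial\Sigma)-\tfrac1{\sin\theta}\bigl(H_{\partial M}-\cos\theta\,H_\Sigma-\nabla_{\overline\nu}\theta\bigr)$; every smooth $\phi$ is admissible because transversality forces $\sin\theta\ne0$ along $\partial\Sigma$, leaving $\phi|_{\partial\Sigma}$ unconstrained. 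Hence the bottom eigenvalue $\lambda_1$ of $L\phi=-u^{-1}\dive_\Sigma(u\nabla\phi)+V\phi$ with Robin condition $\nabla_\nu\phi+B\phi=0$ is nonnegative, and I would take a positive first eigenfunction $\psi$ (smooth and positive by elliptic theory and the strong maximum principle, $\Sigma$ being compact).

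Then I would set $w:=\psi\,u|_\Sigma>0$ and verify the two conditions of Lemma~\ref{Ind2d}. In the interior, using $\dive_\Sigma(u\nabla\psi)=u\psi(V-\lambda_1)\le u\psi V$ (this is where $\lambda_1\ge0$ is used) and discarding the favourable term $-\tfrac12 w|\sff_\Sigma|^2\le0$, a short computation reduces the required inequality $\Delta_\Sigma w\le K_\Sigma w+\tfrac{|\nabla^\Sigma w|^2}{2w}$ to $-\tfrac12 w|\sff_\Sigma|^2\le\tfrac{u|\nabla^\Sigma\psi|^2}{2\psi}$, which is clear. On $\partial\Sigma$, plugging the Robin condition into $\nabla_\nu w=\psi\nabla_\nu u+u\nabla_\nu\psi$, using the first variation $H_\Sigma=-\nabla_{\nu_\Sigma}u/u$ and the contact-angle identity $\nu_{\partial M}=\sin\theta\,\nu+\cos\theta\,\nu_\Sigma$, the desired inequality $\nabla_\nu w\ge a_0w-\sff_\nu(\partial\Sigma)\,w$ reduces \emph{after exact cancellation} precisely to hypothesis~(\ref{2dWarpBC}). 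Since $\sff_\nu(\partial\Sigma)$ is the geodesic curvature of $\partial\Sigma\subset\Sigma$, these are exactly the hypotheses of Lemma~\ref{Ind2d} applied to $w$, so $d_\Sigma(x,\partial\Sigma)\le2/a_0$ for all $x$. For the circumference, $\Delta_\Sigma w\le K_\Sigma w+\tfrac{|\nabla^\Sigma w|^2}{2w}$ combined with $\dive_\Sigma(\nabla^\Sigma w/w)=\tfrac{\Delta_\Sigma w}{w}-\tfrac{|\nabla^\Sigma w|^2}{w^2}$ gives $\int_\Sigma K_\Sigma\ge\int_{\partial\Sigma}\tfrac{\nabla_\nu w}{w}+\tfrac12\int_\Sigma\tfrac{|\nabla^\Sigma w|^2}{w^2}$, while the boundary inequality gives $\int_{\partial\Sigma}\sff_\nu(\partial\Sigma)\ge a_0|\partial\Sigma|-\int_{\partial\Sigma}\tfrac{\nabla_\nu w}{w}$; adding these and applying Gauss--Bonnet $\int_\Sigma K_\Sigma+\int_{\partial\Sigma}\sff_\nu(\partial\Sigma)=2\pi\chi(\Sigma)$ yields $a_0|\partial\Sigma|+\tfrac12\int_\Sigma\tfrac{|\nabla^\Sigma w|^2}{w^2}\le2\pi\chi(\Sigma)$. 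Since $\Sigma$ is a reduced boundary, hence orientable, with nonempty boundary (otherwise there is nothing to prove), each component has $\chi\le1$, and arguing component by component gives $|\partial\Sigma|\le2\pi/a_0$.

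I expect the main obstacle to be the bookkeeping in the two reductions above. First, in the second-variation rearrangement one must arrange that, after inserting the bound on $\Delta_M u$, the interior integrand becomes a clean $u$-weighted Schr\"odinger form featuring $K_\Sigma$, which requires the $R_M$- and normal-derivative cancellations to come out exactly. Second --- and more delicate --- one must match the Robin boundary term of the eigenvalue problem against hypothesis~(\ref{2dWarpBC}) through the contact-angle identities relating the frames $(\nu,\overline\nu)$ and $(\nu_\Sigma,\nu_{\partial M})$, with no loss in the constant $a_0$; it is precisely this matching that dictates the exact form of~(\ref{2dWarpBC}), with its factor $\sin\theta$ and the combination $H_{\partial M}-\nabla_{\overline\nu}\theta$. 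A minor point is regularity: since $n=3\le4$, the warped $\theta$-bubble is smooth up to $\partial M$ and meets it transversally, which legitimizes the first-eigenfunction step and guarantees $\sin\theta\ne0$ on $\partial\Sigma$.
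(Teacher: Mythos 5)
Your proposal is correct and follows essentially the same route as the paper's proof: rearrange the second variation using Gauss--Codazzi and Li's boundary identity, substitute the hypothesis on $\Delta_M u$ (with $|\nabla^M u|^2 = |\nabla^\Sigma u|^2 + |\nabla_{\nu_\Sigma}u|^2$) so the $R_M$ and $|\nabla_{\nu_\Sigma}u|^2$ terms cancel, take a positive first eigenfunction of the resulting weighted Robin problem (stability gives $\lambda_1\ge 0$), set the warping factor $w=\psi u$ (the paper's $f=u\phi$), verify the interior inequality and, via the contact-angle frame relations and the first variation $H_\Sigma = -\nabla_{\nu_\Sigma}u/u$, show the boundary condition reduces exactly to (\ref{2dWarpBC}); then Lemma~\ref{Ind2d} gives the diameter bound. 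The one cosmetic difference is the circumference estimate: you integrate the differential inequalities already derived for $w$ and apply Gauss--Bonnet, whereas the paper goes back to the stability inequality with the explicit test function $\phi=u^{-1/2}$ and applies Gauss--Bonnet there; both yield $a_0|\partial\Sigma_i|\le 2\pi\chi(\Sigma_i)\le 2\pi$ per component, so this is a reorganization rather than a different argument.
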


\begin{proof}
	The rearranged second variation over $\Sigma$ gives,
	\begin{align*}
		0&\leq  \int_{\Sigma} -\dive_{\Sigma}(u\nabla\phi)\phi -\frac{1}{2}u\phi^2(R_M-R_{\Sigma}+|\sff_{\Sigma}|^2)\\
		&+\int_{\Sigma}-\frac{|\nabla_{\nu_{\Sigma}}u|^2\phi^2}{2u}+\phi^2(\Delta_M u-\Delta_{\Sigma}u)\\
		+&\int_{\p \Sigma} u\phi \nabla_{\nu }\phi-\frac{u\phi^2}{\sin\theta}\left(H_{\p M}-\cos\theta H_{\Sigma}- \nabla_{\overline{\nu }} \theta \right)+u\phi^2\sff_{\nu}(\p \Sigma)\\
		&\leq \int_{\Sigma} -\dive_{\Sigma}(u\nabla\phi)\phi +\frac{1}{2}u\phi^2(R_{\Sigma}-|\sff_{\Sigma}|^2) +\frac{|\nabla^{{\Sigma}}u|^2\phi^2}{2u}+\phi^2(-\Delta_{\Sigma}u)\\
		+&\int_{\p \Sigma} u\phi \nabla_{\nu }\phi-\frac{u\phi^2}{\sin\theta}\left(H_{\p M}-\cos\theta H_{\Sigma}- \nabla_{\overline{\nu }} \theta \right)+u\phi^2\sff_{\nu}(\p \Sigma)
	\end{align*}
	Then we obtain that for some choice of $\phi>0$, 
	\begin{align*}
		\dive_{\Sigma}(u\nabla \phi)&\leq \frac{R_{\Sigma}}{2}u\phi-\phi\Delta_{\Sigma}u+\frac{|\nabla^{\Sigma}u|^2\phi}{2u}, \\
		\nabla_{\nu}\phi&= \frac{\phi}{\sin\theta}(H_{\p M}-\nabla_{\bar{\nu}}\phi-\cos\theta H_{\Sigma})-\phi\sff_{\nu}(\p \Sigma)
	\end{align*}
	Let $f=u\phi$, then
	\begin{align*}
		\Delta_{\Sigma}f&=\dive_{\Sigma}(u\nabla_{\Sigma}\phi)+\phi \Delta_{\Sigma}u+\nabla^{\Sigma}u\cdot\nabla^{\Sigma}\phi\\
		&\leq \frac{R_{\Sigma}}{2}u\phi+ \nabla^{\Sigma}u\cdot \nabla^{\Sigma}\phi + \frac{|\nabla^{\Sigma}u|^2\phi}{2u}\\
		&\leq \frac{R_{\Sigma}}{2}u\phi+\frac{|\nabla_{\Sigma}(u\phi)|^2}{2u\phi}=\frac{R_{\Sigma}f}{2}+\frac{|\nabla_{\Sigma} f|^2}{2f}.
	\end{align*}
	We now check  the boundary condition of $f$. We first claim that for $\nu $ the outward pointing unit normal of $\p \Sigma \subset \Sigma$
	\begin{equation}\label{BCclaim}
		\nabla_{\nu}u+\frac{u}{\sin\theta}(H_{\p M}-\nabla_{\overline{\nu}}\theta-\cos\theta H_{\Sigma})\geq a_0 u >0
	\end{equation}
	Using (\ref{BCclaim}) we get,
	\begin{align*}
		\p_{\nu}f&=\phi\p_{\nu}u+u\p_{\nu}\phi\\
		&= \phi\p_{\nu}u+\frac{u\phi}{\sin\theta}(H_{\p M}-\nabla_{\bar{\nu}}\theta-\cos\theta H_{\Sigma})-u\phi\sff_{\nu}(\p \Sigma)\\
		&\geq a_0(u\phi)-\sff_{\nu}(\p \Sigma)(u\phi)=a_0f-\sff_{\nu}(\p \Sigma)f
	\end{align*}
	This implies that $d_{\Sigma}(x,\p \Sigma)\leq \frac{2}{a_0}$ for all $x\in \Sigma$ using Lemma \ref{Ind2d}.
	
	The proof of (\ref{BCclaim}) uses the relationship of different normal vectors along a $\theta$-bubble.  
	\begin{align*}
		\nu &:=\nu_{\p \Sigma}=\cos\theta \bar{\nu}+\sin\theta \nu_{\p M}, \quad &\nu_{\Sigma}&=-\sin\theta \bar{\nu}+\cos\theta\nu_{\p M}
		\\
		\nabla_{\nu_{\Sigma}}u&=-\sin\theta \nabla_{\bar{\nu}}u+\cos\theta\nabla_{\nu_{\p M}}u, \quad &\nabla_{\nu}u&=\cos\theta \nabla_{\bar{\nu}}u+\sin\theta\nabla_{\nu_{\p M}}u
	\end{align*}
	This implies
	\begin{align*}
		&\nabla_{\nu}u+\frac{u}{\sin\theta}(H_{\p M}-\nabla_{\overline{\nu}}\theta-\cos\theta H_{\Sigma})\\
		&=\nabla_{\nu}u-\frac{\cos\theta}{\sin\theta}(-\nabla_{\nu_{\Sigma}}u)+\frac{u}{\sin\theta}(H_{\p M}-\nabla_{\overline{\nu}}\theta)\\
		&=\cos\theta\nabla_{\bar{\nu}}u+\sin\theta\nabla_{\nu_{\p M}}u-\cos\theta\nabla_{\bar{\nu}}u+\frac{\cos^2\theta}{\sin\theta}\nabla_{\nu_{\p M}}u+\frac{u}{\sin\theta}(H_{\p M}-\nabla_{\overline{\nu}}\theta)\\
		&=\frac{1}{\sin\theta} \nabla_{\nu_{\p M}}u+\frac{u}{\sin\theta}(H_{\p M}-\nabla_{\overline{\nu}}\theta)\geq a_0u>0.
	\end{align*}

	Finally, we can use Gauss-Bonnet in the second variation formula. When choosing $u\phi^2=1$, one can simplify to get,
	\begin{align*}
		0& \leq \int_{\Sigma} u(\nabla^{\Sigma}u^{-\frac{1}{2}})^2+\frac{1}{2}R_{\Sigma} +\frac{|\nabla^{\Sigma}u|^2}{2u^2}-\frac{\Delta_{\Sigma} u}{u}+\int_{\p\Sigma}-\frac{1}{\sin\theta}(H_{\p M-\cos\theta H_{\Sigma}}-\nabla_{\bar{\nu}}\theta)+k_{\p\Sigma} \\
		&= \int_{\Sigma} -\frac{1}{4}u^{-1}|\nabla^{\Sigma} u|^2 +\int_{\Sigma} \frac{R_{\Sigma}}{2}+\int_{\p\Sigma} k_{\p\Sigma}+\int_{\p\Sigma}-\frac{\nabla_{\nu}u}{u}-\frac{1}{\sin\theta}\left(H_{\p M}-\cos\theta H_{\Sigma}- \nabla_{\overline{\nu }} \theta \right)\\
		&\leq \int_{\Sigma} -\frac{1}{4}u^{-1}|\nabla^{\Sigma} u|^2 +2\pi\chi_{\Sigma}-\int_{\p \Sigma}a_0,
	\end{align*}
	Since $\chi_{\Sigma}$ is a $\theta$-bubble ($\p\Sigma \neq \emptyset$), $\chi_{\Sigma}\leq 1$ implies  $|\p\Sigma|\leq \frac{2\pi}{a_0}$.
\end{proof}

\begin{coro}\label{PutInBubble}
	Consider $(M^3,\p M)\hookrightarrow (X^4,\p X)$  a FMBH and $X$ has $$R_{X}\geq 0, \quad H_{\p X}\geq H_0>0.$$ 
	Let the diameter of $\p M$ (with respect to $d_{\p M}(\cdot,\cdot)$ under the induced metric) be larger than some $d_0>0$, and assume 
	\begin{equation*}
		H_0-\frac{\pi}{d_0}=a_0>0,
	\end{equation*}
	then we can find a stable warped $\theta$-bubble $(\Sigma^2,\p \Sigma)$ such that,
	\begin{equation*}
		|\p \Sigma| \leq \frac{2\pi}{a_0}, \quad d_{\Sigma}(x,\p \Sigma)\leq \frac{2}{a_0}.
	\end{equation*}
\end{coro}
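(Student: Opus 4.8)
The plan is to realize $\Sigma$ as a minimizer of a warped $\theta$-bubble functional over $M$, with a weight $u$ that encodes the ambient curvature and boundary bounds and a prescribed angle $\theta$ on $\partial M$ tuned to the constant $a_0$, and then to invoke Lemma~\ref{Ind3d}. For the weight I would use Lemma~\ref{inherit}; concretely, since $M$ is a stable FBMH one may take $u>0$ solving the Jacobi equation $-\Delta_M u=(\Ric_X(\nu_M,\nu_M)+|\sff_M|^2)u$ with the Robin condition $\nabla_{\nu_{\partial M}}u=\sff_{\partial X}(\nu_M,\nu_M)u$ (obtained, when $M$ is noncompact, as a limit of first eigenfunctions on an exhaustion). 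The Gauss equation together with $R_X\ge0$ then gives
\[
	-\Delta_M u=\tfrac12\bigl(R_X-R_M+|\sff_M|^2\bigr)u\ \ge\ -\tfrac12 R_M\,u,
\]
while the free boundary condition forces $\sff_{\partial X}(\nu_M,\nu_M)=H_{\partial X}-H_{\partial M}$, so $\nabla_{\nu_{\partial M}}u+uH_{\partial M}=uH_{\partial X}\ge uH_0$. Hence $u>0$ satisfies $\Delta_M u\le\tfrac12 R_M u\le\tfrac12 R_M u+\tfrac{|\nabla^M u|^2}{2u}$ on $M$ and $\nabla_{\nu_{\partial M}}u+uH_{\partial M}\ge H_0u>0$ on $\partial M$; in particular the weight hypothesis of Theorem~\ref{ExiMinWarp} holds.

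Next I would construct the angle. Since the diameter of $\partial M$ exceeds $d_0$, fix $p_1,p_2\in\partial M$ with $d_{\partial M}(p_1,p_2)>d_0$, let $\rho$ be a smoothing of $d_{\partial M}(\cdot,p_1)$ on $\partial M$ with $|\nabla\rho|\le1$, and set $\theta=\Theta(\rho)$ where $\Theta\colon[0,\infty)\to[0,\pi]$ is smooth and nondecreasing, $\Theta\equiv0$ on $[0,\delta]$ for some $\delta>0$, $\Theta\equiv\pi$ on $[T',\infty)$ for some $T'<d_0$, and $\Theta'(s)\le H_0-a_0\sin\Theta(s)$ for all $s\ge0$. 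Such a $\Theta$ exists because $a_0=H_0-\pi/d_0\in(0,H_0)$ forces $H_0-a_0\sin\sigma\ge H_0-a_0=\pi/d_0>0$ on $[0,\pi]$, so the solution of $\Theta'=H_0-a_0\sin\Theta$ with $\Theta(0)=0$ reaches $\pi$ in time
\[
	\int_0^\pi\frac{d\sigma}{H_0-a_0\sin\sigma}\ \le\ \int_0^\pi\frac{d_0}{\pi}\,d\sigma\ =\ d_0,
\]
strictly since $\sin\sigma<1$ off $\sigma=\pi/2$; the slack lets one flatten $\Theta$ to $0$ and to $\pi$ at the two ends while keeping the differential inequality and attaining the value $\pi$ at a level $T'<d_0<d_{\partial M}(p_1,p_2)$. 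Then $S_+=\{\theta=0\}\supseteq\{\rho\le\delta\}$ is a disk around $p_1$ and $S_-=\{\theta=\pi\}=\{\rho\ge T'\}$ is nonempty (it meets the geodesic from $p_1$ to $p_2$), both closed with smooth boundary upon choosing regular levels of $\rho$; and on $\{0<\theta<\pi\}$ every unit vector $\overline{\nu}$ satisfies $\nabla_{\overline{\nu}}\theta=\Theta'(\rho)\langle\nabla\rho,\overline{\nu}\rangle\le\Theta'(\rho)|\nabla\rho|\le\Theta'(\rho)\le H_0-a_0\sin\theta$.

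To assemble, if $M$ is compact take $N=M$; otherwise replace $M$ by a compact domain $N\subset M$ whose boundary contains the compact set $\{\rho\le T'\}\subset\partial M$ and extend $\theta\equiv\pi$ on $\partial N\setminus\partial M$, so the minimizer below — being separated from a neighborhood of $S_-$ — never touches the artificial part of $\partial N$. By Theorem~\ref{ExiMinWarp}, $\cA_u$ over the Caccioppoli sets containing $S_+$ and disjoint from $S_-$ is attained by some $\Omega$, and since $n=3\le4$ the set $\Sigma=\partial^{*}\Omega$ is a smooth stable $u$-warped $\theta$-bubble meeting $\partial M$ transversally, with $\partial\Sigma\neq\emptyset$ (because $\Omega$ separates the nonempty $S_+$ from the nonempty $S_-$ inside the connected piece of $\partial M$ on which $\theta$ varies) and $\partial\Sigma\subset\{0<\theta<\pi\}$ by the regularity theory. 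Along $\partial\Sigma$, combining the inequalities above,
\[
	\nabla_{\nu_{\partial M}}u+u\bigl(H_{\partial M}-\nabla_{\overline{\nu}}\theta\bigr)\ \ge\ H_0u-u\bigl(H_0-a_0\sin\theta\bigr)\ =\ (\sin\theta)\,a_0\,u\ >\ 0,
\]
which is exactly the boundary hypothesis \eqref{2dWarpBC} of Lemma~\ref{Ind3d} with constant $a_0$; the interior hypothesis $\Delta_M u\le\frac{R_M}{2}u+\frac{|\nabla^M u|^2}{2u}$ is the inequality for $u$ established above. Lemma~\ref{Ind3d} then gives $|\partial\Sigma|\le2\pi/a_0$ and $d_\Sigma(x,\partial\Sigma)\le2/a_0$ for all $x\in\Sigma$, as claimed.

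The step I expect to be most delicate is producing $\Theta$ with the sharp gradient bound $\Theta'\le H_0-a_0\sin\Theta$ realized over a length strictly less than $d_{\partial M}(p_1,p_2)$: this is precisely where the hypothesis $H_0-\pi/d_0=a_0>0$ enters, through the elementary estimate $\int_0^\pi(H_0-a_0\sin\sigma)^{-1}\,d\sigma\le d_0$, and it is where the constant $\pi$ originates. The remaining point that needs care is the reduction to a compact ambient domain when $M$ is complete but noncompact, which is routine.
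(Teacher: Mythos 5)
Your proposal is correct and follows essentially the same route as the paper: build the weight $u>0$ from the stable Jacobi operator (so $\Delta_M u\le\tfrac12 R_M u$ via Gauss and $R_X\ge0$, and $\nabla_{\nu_{\p M}}u+uH_{\p M}=uH_{\p X}\ge uH_0$ via the free boundary condition), choose an angle $\theta$ on $\p M$ using the diameter hypothesis so that $\nabla_{\bar\nu}\theta\le H_0-a_0\sin\theta$, obtain a smooth stable warped $\theta$-bubble from Theorem~\ref{ExiMinWarp}, and apply Lemma~\ref{Ind3d}. The paper takes the cruder but equivalent gradient bound $|\nabla^{\p M}\theta|\le\pi/d_0=H_0-a_0\le H_0-a_0\sin\theta$ rather than your ODE construction of $\Theta$, but this is a cosmetic difference and both verify the boundary hypothesis~\eqref{2dWarpBC}.
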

\begin{proof}
	Using the stability inequality of $(M,\p M)$, we can find a positive $u: M \rightarrow \RR_+$
	\begin{equation*}
		\Delta_{M} u\leq \frac{1}{2}R_M, \quad \nabla_{\nu_{\p M}}u=\sff_{\p X}(\nu_{M},\nu_{M})u.
	\end{equation*}
	Now using the diameter of $\p M$ is at least $d_0$, we want to use the existence of smooth minimizer in Theorem \ref{ExiMinWarp}, for $S_{\pm}=\{x\in \p M, \cos\theta(x)=\pm 1\}$, and we can assume that $|\nabla^{\p M}\theta|\leq \frac{\pi}{d_0}$. We check that in this case,
	\begin{equation*}
		\nabla_{\nu_{\p M}}u+uH_{\p M}=uH_{\p X}\geq uH_0>0.
	\end{equation*}

	We now minimize the following functional as in Theorem \ref{ExiMinWarp} to get a stable $u$-warped $\theta$-bubble,
	\begin{equation*}
		\cA_u(\Omega)=\int_{\p^* \Omega} u -\int_{\p M\cap \Omega}u\cos\theta.
	\end{equation*}
	We remark that now the ambient manifold $M$ is not compact (in Theorem \ref{ExiMinWarp} we required compactness). We can use a similar adaptation to non-compact manifolds, as in the proof of Theorem 1.4 and Remark 3.2  in \cite{wu2024capillary}. This is because the condition $H_0-\frac{\pi}{d_0}$ will constrain the minimizer $ \Sigma=\p \Omega$ to lie close to its boundary $\p \Sigma$, which was prescribed to lie in a bounded region. We refer  the reader to \cite{wu2024capillary} for more details.
	
	We now check that condition (\ref{2dWarpBC}) is satisfied. 
	\begin{align*}
		\nabla_{\nu_{\p M}}u+u(H_{\p M}-\nabla_{\overline{\nu}}\theta)&= \sff_{\p X}(\nu_M,\nu_M)u+u(H_{\p M}-\nabla_{\overline{\nu}}\theta) \\
		&= u(H_{\p X}-\nabla_{\bar{\nu}}\theta)\geq u(H_0-\frac{\pi}{d_0})=a_0u\geq (\sin\theta)a_0u>0
	\end{align*}
	
	Now Lemma \ref{Ind3d} applies to give the desired bounds for the $\theta$-bubble.
\end{proof}

Finally we summarize the inductive procedure for smooth stable warped $\theta$-bubble in a manifold of general dimension with an analogous NNSC and strictly mean convexity assumption. The following lemma can be viewed as the PSC equivalent ``inheritance'' phenomenon for manifolds with NNSC and mean convex boundary. The proof is the same as Lemma \ref{Ind3d}.
\begin{lemm}\label{inherit}
	If $(\Sigma^n,\p \Sigma)\hookrightarrow (M^{n+1},\p M)$ is a smooth stable $u$-warped $\theta$-bubble, with $u>0$ a smooth function on $M$, and for some $a_0>0$,
	\begin{align*}
		\Delta_M u&\leq \frac{R_M}{2}u+\frac{|\nabla^M u|^2}{2u}\\
		\nabla_{\nu_{\p M}}u&\geq (\sin\theta)a_0 u-\frac{u}{\sin\theta}(H_{\p M}-\nabla_{\bar{\nu}}\theta)
	\end{align*}
	then $(\Sigma^n,\p \Sigma)$ has for some $f>0$,
	\begin{align*}
		\Delta_{\Sigma}f&\leq \frac{R_{\Sigma}}{2}f+\frac{|\nabla^{\Sigma}f|^2}{2f}\\
		\nabla_{\nu_{\p \Sigma}} f&\geq a_0 f-\sff_{\nu}(\p \Sigma)f
	\end{align*}
\end{lemm}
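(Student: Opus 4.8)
The plan is to transcribe, in arbitrary dimension, the first half of the proof of Lemma~\ref{Ind3d} --- everything except the concluding Gauss--Bonnet step, which produced the circumference bound and is special to $2$-dimensional bubbles. Let $(\Sigma^n,\p\Sigma)\hookrightarrow(M^{n+1},\p M)$ be the given stable $u$-warped $\theta$-bubble, and start from the rearranged second variation of the preceding lemma, which by stability is $\ge 0$ for every admissible test function $\phi$. The first step absorbs the interior hypothesis on $u$: splitting $|\nabla^M u|^2=|\nabla^\Sigma u|^2+|\nabla_{\nu_\Sigma}u|^2$ and inserting $\Delta_M u\le\frac{R_M}{2}u+\frac{|\nabla^M u|^2}{2u}$ cancels the $-\frac{|\nabla_{\nu_\Sigma}u|^2\phi^2}{2u}$ term, while $\frac{R_M}{2}u\phi^2$ combines with $-\frac12 u\phi^2(R_M-R_\Sigma+|\sff_\Sigma|^2)$ to leave $\frac12 u\phi^2(R_\Sigma-|\sff_\Sigma|^2)$. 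After integrating $-\dive_\Sigma(u\nabla\phi)\phi$ by parts against the boundary term $u\phi\nabla_\nu\phi$, what survives is an inequality of the shape
\[
0\le\int_\Sigma u|\nabla\phi|^2+\tfrac12 u\phi^2(R_\Sigma-|\sff_\Sigma|^2)+\tfrac{|\nabla^\Sigma u|^2}{2u}\phi^2-\phi^2\Delta_\Sigma u+\int_{\p\Sigma}\Big(u\,\sff_\nu(\p\Sigma)-\tfrac{u}{\sin\theta}\big(H_{\p M}-\cos\theta\,H_\Sigma-\nabla_{\bar\nu}\theta\big)\Big)\phi^2,
\]
holding for all admissible $\phi$.

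Next I read the right-hand side as the quadratic form of a weighted Robin eigenvalue problem on $(\Sigma,\p\Sigma)$ with weight $u$; since the form is non-negative its first eigenvalue is $\ge 0$, and I take $\phi>0$ to be the associated first eigenfunction (existence and smoothness on the compact bubble $\Sigma$ are the only non-computational input and follow from standard elliptic theory --- this is the step invoked ``for some choice of $\phi>0$'' in Lemma~\ref{Ind3d}). Discarding the non-negative eigenvalue contribution and the non-positive term $-\frac12 u|\sff_\Sigma|^2\phi$, the Euler--Lagrange system of $\phi$ reads, pointwise, $\dive_\Sigma(u\nabla\phi)\le\frac{R_\Sigma}{2}u\phi+\frac{|\nabla^\Sigma u|^2}{2u}\phi-\phi\Delta_\Sigma u$ in $\Sigma$, together with $\nabla_\nu\phi=\frac{\phi}{\sin\theta}\big(H_{\p M}-\cos\theta\,H_\Sigma-\nabla_{\bar\nu}\theta\big)-\phi\,\sff_\nu(\p\Sigma)$ on $\p\Sigma$. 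Now set $f=u\phi>0$. From $\Delta_\Sigma f=\dive_\Sigma(u\nabla\phi)+\phi\Delta_\Sigma u+\langle\nabla^\Sigma u,\nabla^\Sigma\phi\rangle$ the interior inequality becomes $\Delta_\Sigma f\le\frac{R_\Sigma}{2}f+\frac{|\nabla^\Sigma u|^2}{2u}\phi+\langle\nabla^\Sigma u,\nabla^\Sigma\phi\rangle$, and expanding $\nabla f=\phi\nabla u+u\nabla\phi$ gives the identity $\frac{|\nabla^\Sigma u|^2}{2u}\phi+\langle\nabla^\Sigma u,\nabla^\Sigma\phi\rangle=\frac{|\nabla^\Sigma f|^2}{2f}-\frac{u|\nabla^\Sigma\phi|^2}{2\phi}\le\frac{|\nabla^\Sigma f|^2}{2f}$; this yields the first claimed inequality $\Delta_\Sigma f\le\frac{R_\Sigma}{2}f+\frac{|\nabla^\Sigma f|^2}{2f}$.

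Finally, for the boundary inequality, $\nabla_{\nu_{\p\Sigma}}f=\phi\nabla_\nu u+u\nabla_\nu\phi=\phi\big(\nabla_\nu u+\frac{u}{\sin\theta}(H_{\p M}-\cos\theta\,H_\Sigma-\nabla_{\bar\nu}\theta)\big)-f\,\sff_\nu(\p\Sigma)$, so it suffices to bound the bracket below by $a_0 u$. This is precisely the computation establishing (\ref{BCclaim}): decomposing $\nu=\cos\theta\,\bar\nu+\sin\theta\,\nu_{\p M}$ and $\nu_\Sigma=-\sin\theta\,\bar\nu+\cos\theta\,\nu_{\p M}$ and using the warped first variation $uH_\Sigma=-\nabla_{\nu_\Sigma}u$, the $\bar\nu$-derivatives cancel and the bracket collapses to $\frac{1}{\sin\theta}\big(\nabla_{\nu_{\p M}}u+u(H_{\p M}-\nabla_{\bar\nu}\theta)\big)$, which the boundary hypothesis on $u$ bounds below by $a_0 u$. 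Hence $\nabla_{\nu_{\p\Sigma}}f\ge a_0 f-\sff_\nu(\p\Sigma)f$, as claimed. I do not expect a genuine obstacle here: the argument is the dimension-free core of Lemma~\ref{Ind3d}, and the one mildly delicate point is simply keeping straight the four unit normals $\nu,\nu_\Sigma,\bar\nu,\nu_{\p M}$ and the sign conventions in the boundary bookkeeping.
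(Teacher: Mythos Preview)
Your proposal is correct and follows exactly the approach the paper indicates: the paper's own proof of this lemma is the single sentence ``The proof is the same as Lemma~\ref{Ind3d},'' and you have faithfully transcribed that argument in arbitrary dimension --- the rearranged second variation, absorption of the interior hypothesis on $u$, the first-eigenfunction choice of $\phi>0$, the substitution $f=u\phi$, and the normal-vector computation establishing the analogue of (\ref{BCclaim}). Your write-up is in fact more explicit than the paper's, spelling out the eigenvalue interpretation and the identity $\frac{|\nabla^\Sigma u|^2}{2u}\phi+\langle\nabla^\Sigma u,\nabla^\Sigma\phi\rangle=\frac{|\nabla^\Sigma f|^2}{2f}-\frac{u|\nabla^\Sigma\phi|^2}{2\phi}$ that the paper leaves implicit.
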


\section{Parabolicity and Stability}

In \cite{cao1997structure}, the authors proved that a complete non-compact two-sided stable minimal hypersurfaces in $\RR^{n+1}$ for $n\geq 3$ must have only one end. The use of Michael-Simon-Sobolev inequality for minimal hypersurfaces in $\RR^{n+1}$ is crucial since this implies that any end of a stable minimal hypersurfaces must be non-parabolic. And having two non-parabolic ends together with the Schoen-Yau rearranged stability inequality using the Bochner formula, this implies that the stable minimal hypersurface must admit a non-constant harmonic function with finite energy and must have finite volume, a contradiction to the monotonicity formula.

In \cite{chodosh2024complete}, the authors proved that under the assumption of non-negative 2 intermediate Ricci curvature of a 4 dimensional ambient manifold, a stable minimal hypersurface with infinite volume can have at most one non-parabolic end. This was extended to the FBMH case in \cite{wu2023free}.

In section \ref{secRigidity}, we will show that if $M$ is a simply connected stable minimal hypersurface in $(X^4,\p X)$ as in our main theorem, then $\p M$ must be connected and have an end in the only non-parabolic end of $M$. In this section we prove some auxiliary results.  

We first prove a generalized case to Theorem 5.2 in \cite{wu2023free}.
\begin{lemm}\label{BochnerStabNeu}
	Let $(X^4,\p X)$ be a complete Riemannian manifold with $\Ric^X_2 \geq 0$ and $(M^3,\p M)$ a FBMH in $X$. If either
	\begin{itemize}
		\item $u$ is a smooth harmonic function on $M$ with Neumann boundary condition and $ \sff_2^{\p X}\geq 0$,
		\item or $u$ is a smooth harmonic function on $M$ with Dirichlet boundary condition and $H_{\p X}\geq 0$,
	\end{itemize}
	then
	\begin{equation*} 
		\int_M \phi^2( \frac{1}{3}|\sff|^2|\nabla u|^2+\frac{1}{2}|\nabla|\nabla u||^2)\leq \int_M |\nabla \phi|^2 |\nabla u|^2.
	\end{equation*}
\end{lemm}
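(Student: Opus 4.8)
The plan is to run the Schoen--Yau / Cao--Shen--Zhu Bochner argument adapted to the free boundary setting: play the Bochner formula for the harmonic function $u$ against the stability inequality of $M$, with the free boundary condition used to kill the boundary term that appears. Throughout write $w:=|\nabla u|$.

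Since $\Delta u=0$, the Bochner formula reads $\tfrac12\Delta w^2=|\Hess u|^2+\Ric_M(\nabla u,\nabla u)$. On the open set $\{w>0\}$ I would feed in two pointwise inputs: the refined Kato inequality for harmonic functions, $|\Hess u|^2\geq \tfrac32|\nabla w|^2$ (here $\dim M=3$), and the Ricci lower bound for a FBMH in a $4$-manifold with $\Ric^X_2\geq 0$ from Lemma~4.2 of \cite{chodosh2024complete}, namely $\Ric_M\geq -\tfrac23|\sff_M|^2$. Together these give the weak inequality $\tfrac12\Delta w^2\geq \tfrac32|\nabla w|^2-\tfrac23|\sff_M|^2w^2$ on $M$. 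The standard caveat that $w$ is only Lipschitz near $\{\nabla u=0\}$ would be handled in the usual way, e.g.\ by working on $\{w>0\}$ and checking that the exceptional set contributes nothing, or by replacing $w$ with $(w^2+\eps^2)^{1/2}$ and letting $\eps\to0$.

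Next, for a compactly supported Lipschitz $\phi$, I multiply this inequality by $\phi^2$ and integrate by parts over $M$, producing the boundary term $\int_{\p M}\tfrac12\phi^2\p_{\nu_{\p M}}(w^2)$. Independently I plug $\varphi=\phi w$ into the stability inequality; since $\Ric^X_2\geq 0$ forces $\Ric_X\geq 0$ in dimension at least three, the $\Ric_X(\nu_M,\nu_M)\varphi^2$ term may be discarded, leaving $\int_M|\sff_M|^2\phi^2w^2+\int_{\p M}\sff_{\p X}(\nu_M,\nu_M)\phi^2w^2\leq\int_M|\nabla(\phi w)|^2$. Expanding $|\nabla(\phi w)|^2$ and using the integrated Bochner inequality to absorb the cross term $2\phi w\,\nabla\phi\cdot\nabla w$, the interior terms regroup (the coefficients $\tfrac13=1-\tfrac23$ and $\tfrac12=\tfrac32-1$) to
\begin{equation*}
\frac13\int_M|\sff_M|^2\phi^2w^2+\frac12\int_M\phi^2|\nabla w|^2\ \leq\ \int_M|\nabla\phi|^2w^2+\int_{\p M}\Big(\tfrac12\p_{\nu_{\p M}}(w^2)-\sff_{\p X}(\nu_M,\nu_M)w^2\Big)\phi^2 .
\end{equation*}
Since $w^2=|\nabla u|^2$ and $|\nabla w|=|\nabla|\nabla u||$, the asserted inequality will follow once the boundary integrand is shown to be $\leq 0$ pointwise. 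This is where the two hypotheses enter, and I expect it to be the only genuinely delicate step.

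For the boundary term write $\nu:=\nu_{\p M}$ and note $\tfrac12\p_\nu(w^2)=\Hess u(\nabla u,\nu)$ along $\p M$. In the Neumann case $\nabla u$ is tangent to $\p M$ there, so $\Hess u(\nabla u,\nu)=-\sff_{\p M}(\nabla u,\nabla u)=-\sff_{\p X}(\nabla u,\nabla u)$ by the FBMH relation $\sff_{\p M}=\sff_{\p X}|_{T\p M}$; moreover $\nu_M\perp T_pM$ forces $\nu_M\perp\nu_{\p X}=\nu$, so $\nu_M\in T_p\p X$ and $\nu_M\perp\nabla u$, and $\sff^{\p X}_2\geq 0$ applied to the orthonormal pair $\{\nu_M,\nabla u/|\nabla u|\}$ gives precisely $\sff_{\p X}(\nabla u,\nabla u)+\sff_{\p X}(\nu_M,\nu_M)|\nabla u|^2\geq 0$, i.e.\ the integrand is $\leq 0$. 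In the Dirichlet case $\nabla u=(\p_\nu u)\nu$ along $\p M$, and decomposing $0=\Delta u$ into its tangential and normal parts along $\p M$ together with $u|_{\p M}$ being constant yields $\Hess u(\nu,\nu)=-H_{\p M}\,\p_\nu u$, whence $\tfrac12\p_\nu(w^2)=-H_{\p M}|\nabla u|^2$; using the FBMH identity $H_{\p X}=H_{\p M}+\sff_{\p X}(\nu_M,\nu_M)$, the integrand becomes $-H_{\p X}|\nabla u|^2\leq 0$. In both cases the boundary term drops out and the claimed estimate is obtained.
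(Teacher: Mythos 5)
Your proof is correct and follows essentially the same route as the paper: play the Bochner formula (with the refined Kato inequality $|\Hess u|^2\geq \tfrac{3}{2}|\nabla|\nabla u||^2$ and the $\Ric_M\geq -\tfrac23|\sff_M|^2$ bound) against the stability inequality with test function $\phi|\nabla u|$, arrive at the intermediate estimate with boundary term $\int_{\p M}\phi^2(\tfrac12\nabla_{\nu_{\p M}}|\nabla u|^2-\sff_{\p X}(\nu_M,\nu_M)|\nabla u|^2)$, and show it is nonpositive using the boundary condition. The paper delegates the Neumann case to \cite{wu2023free} and only writes out the Dirichlet boundary computation; you supply the Neumann boundary computation explicitly (identifying $\tfrac12\partial_{\nu}|\nabla u|^2=-\sff_{\p X}(\nabla u,\nabla u)$ and invoking $\sff_2^{\p X}\geq 0$ on the orthonormal pair $\{\nabla u/|\nabla u|,\nu_M\}\subset T\p X$), which matches what that reference does.
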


\begin{proof}

	 The proof of the Neumann case is in \cite{wu2023free}, Theorem 5.2.
	 
	For Dirichlet case, the same proof of applies to get the following (without assuming any boundary condition),
	\begin{align}\label{Lem4dot2}
		&\int_M \phi^2( \frac{1}{3}|\sff|^2|\nabla u|^2+\frac{1}{2}|\nabla|\nabla u||^2)\\
		\leq & \int_M |\nabla \phi|^2 |\nabla u|^2+\int_{\p M}\phi^2(\frac{1}{2}\nabla_{\nu_{\p M}}|\nabla u|^2-\sff_{\p X}(\nu_{M},\nu_M)|\nabla u|^2).\nonumber
	\end{align}
	We have  $0=\Delta_M u=\Delta_{\p M}u+\nabla^2 u(\nu_{\p M},\nu_{\p M})+H_{\p M} \nabla_{\nu_{\p M}}u$.  The Dirichlet boundary condition implies $\Delta_{\p M}u=0$ and $\nabla u=\nabla_{\nu_{\p M}} u \cdot \nu_{\p M}$,
	\begin{align*}
		\frac{1}{2}\nabla_{\nu_{\p M}} |\nabla u|^2&=\nabla_{\nu_{\p M}} \nabla u \cdot \nabla u\\
		&=(\nabla_{\nu_{\p M}}u)\nabla^2 u(\nu_{\p M},\nu_{\p M})\\
		&=-(\nabla_{\nu_{\p M}} u)^2 H_{\p M}=-H_{\p M}|\nabla u|^2.
	\end{align*}
	Plug in the last computation into (\ref{Lem4dot2}) and use $H_{\p X}\geq 0$ then we obtain the desired inequality.
\end{proof}

\begin{rema}\label{DirNeu}
	Note that $\sff^{\p X}_2 \geq 0$ implies $H_{\p X}\geq 0$.
	If $\p M$ has more than one component and $\p M$ is 2-convex, the same result as in Lemma \ref{BochnerStabNeu} holds if one assumes Dirichlet boundary or Neumann boundary conditions for the harmonic function on different components.
\end{rema}

We first recall some definitions and lemmas about parabolicity. Details about this notion can be found in  \cite{li2004lectures}, \cite{chodosh2024complete} and \cite{wu2023free}. For regularity reasons, we assume here that each end $E \subset (M \setminus K)$ for some compact set $K$, if has corner points (where $\p_0 E:=\p M\cap E$ and $\p_1 E:= \overline{E} \cap K$ intersect), then the interior corner has small angles (no more than $\frac{\pi}{8}$). See \cite{wu2023free} section 4 for more details on this.

\begin{defi}[\cite{wu2023free}, Definition 4.4]
	A component $E \subset (M\setminus K)$ for some compact set $K \subset M$ of a Riemannian manifold $M$ is non-parabolic if there is a non-constant positive harmonic function $f : E \rightarrow (0,1]$, and $\p_{\nu_{\p M}} f \rvert_{\p_0E}=0, f\rvert_{\p_1 E}=1$. Otherwise $E$ is parabolic.
\end{defi}

\begin{lemm}[\cite{wu2023free}, Theorem 4.12]
	If $E$ is non-parabolic, then there is a unique function $f:E \rightarrow (0,1]$ with Neumann boundary condition on $\p_0 E$ and Dirichlet on $\p_1 E$, such that if $g:E\rightarrow (0,1]$ is also a harmonic function with Neumann boundary condition on $\p_0 E$ and Dirichlet boundary condition on $\p_1 E$, then $f\leq g$. We call this the minimal barrier function over the non-parabolic component $E$. Furthermore, we can assume the minimal barrier function has finite Dirichlet energy.
\end{lemm}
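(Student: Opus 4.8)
The plan is to obtain $f$ as the monotone limit of solutions of the mixed (Zaremba-type) boundary value problem on a compact exhaustion of $E$, and then read off minimality, non-constancy, and finiteness of energy from comparison arguments. First I would fix a relatively compact exhaustion $\Omega_1 \Subset \Omega_2 \Subset \cdots$ with $\bigcup_i \Omega_i = E$, chosen so that $\p \Omega_i = \p_1 E \cup (\p_0 E \cap \overline{\Omega_i}) \cup \Sigma_i$, where $\Sigma_i := \overline{\p\Omega_i \setminus (\p_0 E \cup \p_1 E)}$, and so that the interior corners along $\p_0 E \cap \Sigma_i$ have small angle in the sense of the convention recalled in \cite{wu2023free}, so that the elliptic regularity theory there applies. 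On each $\Omega_i$ let $f_i$ be the harmonic function with $f_i = 1$ on $\p_1 E$, $\p_{\nu_{\p M}} f_i = 0$ on $\p_0 E$, and $f_i = 0$ on $\Sigma_i$; equivalently, $f_i$ minimizes $\int_{\Omega_i}|\nabla v|^2$ among $v \in H^1(\Omega_i)$ with $v=1$ on $\p_1 E$, $v=0$ on $\Sigma_i$ (the Neumann condition being natural). The maximum principle gives $0\le f_i \le 1$, and the strong maximum principle together with the Hopf lemma at $\p_0 E$ gives $0<f_i<1$ away from $\Sigma_i$.

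Next I would prove monotonicity: on $\Omega_i$ the function $f_{i+1}-f_i$ is harmonic, vanishes on $\p_1 E$, has zero normal derivative on $\p_0 E$, and equals $f_{i+1}\ge 0$ on $\Sigma_i$, so $f_{i+1}\ge f_i$ on $\Omega_i$. Hence $f:=\lim_i f_i$ exists pointwise with $0 < f_1 \le f \le 1$ on $\Omega_1$, and by interior and boundary Schauder estimates the convergence is in $C^\infty_{\mathrm{loc}}$ up to $\p_0 E$ and $\p_1 E$; thus $f$ is harmonic, $f=1$ on $\p_1 E$, $\p_{\nu_{\p M}} f = 0$ on $\p_0 E$, and $f>0$ on all of $E$ (every point of $E$ lies in the interior of some $\Omega_i$, so $f \ge f_i >0$ there, and then the strong maximum principle and Hopf lemma propagate positivity). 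For minimality, if $g\colon E\to(0,1]$ is any harmonic function with the same boundary conditions, then on each $\Omega_i$, $g-f_i$ is harmonic, vanishes on $\p_1 E$, has zero normal derivative on $\p_0 E$, and is $\ge 0$ on $\Sigma_i$ since $g>0$; the maximum principle gives $g\ge f_i$, and letting $i\to\infty$ yields $g\ge f$. Uniqueness of the minimal $f$ is then immediate, and $f$ deserves the name ``minimal barrier''. Moreover, when $E$ is non-parabolic the definition supplies some non-constant positive harmonic $g$ with these boundary conditions; as $g\le 1$ and $g\not\equiv 1$, the strong maximum principle forces $g<1$ somewhere, so $f\le g$ shows $f\not\equiv 1$, i.e.\ $f$ is genuinely non-constant.

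For the Dirichlet energy bound I would again use the variational characterization: extending $f_i$ by $0$ on $\Omega_{i+1}\setminus\Omega_i$ gives an admissible competitor for the problem on $\Omega_{i+1}$ (it lies in $H^1(\Omega_{i+1})$ precisely because $f_i$ vanishes on $\Sigma_i$), so $\int_{\Omega_{i+1}}|\nabla f_{i+1}|^2 \le \int_{\Omega_i}|\nabla f_i|^2$. Thus the energies are non-increasing and bounded by $\int_{\Omega_1}|\nabla f_1|^2<\infty$, and by weak lower semicontinuity $\int_E |\nabla f|^2 \le \int_{\Omega_1}|\nabla f_1|^2<\infty$.

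The main technical obstacle is the regularity of the mixed boundary value problem near the corner $\p_0 E \cap \p_1 E$ and the $C^\infty_{\mathrm{loc}}$ convergence $f_i\to f$ up to $\p_0 E$ and $\p_1 E$; this is exactly what the small-interior-angle convention and the elliptic estimates recalled in \cite{wu2023free} are arranged to provide, so I would invoke those results rather than reprove them here.
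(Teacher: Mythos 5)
The paper does not reprove this lemma; it cites it directly from \cite{wu2023free}, Theorem 4.12. Your proposal is correct and is the standard construction used there and in the parabolicity literature (e.g.\ Li's lecture notes): exhaust $E$ by compact sets, solve the mixed Dirichlet--Neumann problem on each with Dirichlet data $0$ on the artificial cut, establish monotonicity of the approximations by the maximum principle together with the Hopf lemma along $\p_0 E$, pass to the harmonic limit, obtain minimality by comparing an arbitrary admissible $g$ against each approximant $f_i$ on $\Omega_i$, and read off non-constancy from the non-parabolicity hypothesis. The energy bound via extension-by-zero as a competitor on $\Omega_{i+1}$, giving a non-increasing sequence of Dirichlet energies and then lower semicontinuity, is exactly the right variational argument. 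One small imprecision: $0<f_1\le f$ holds only in the interior of $\Omega_1$, since $f_1$ vanishes on $\Sigma_1$; your parenthetical strong-maximum-principle/Hopf remark already fixes this, so it is merely a wording issue. Overall this matches the cited proof.
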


\begin{lemm}[\cite{wu2023free}, Lemma 4.10]\label{NPinherit}
	If $K\subset K'$ are two compact sets in $M$ and $E \subset (M\setminus K)$ is a non-parabolic component, then there is a component $E' \subset (M \setminus K')$ and $E' \subset E$, such that $E'$ is non-parabolic.
\end{lemm}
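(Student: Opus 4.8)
The plan is to prove the contrapositive: assuming that every component $E'$ of $M\setminus K'$ with $E'\subset E$ is parabolic, I will deduce that $E$ itself is parabolic, contradicting the hypothesis. By the definition of non-parabolicity, $E$ carries a non-constant positive harmonic function $f\colon E\to(0,1]$ with $\partial_{\nu_{\partial M}}f=0$ on $\partial_0 E$ and $f\equiv1$ on $\partial_1 E$; in particular $0<f\le1$ on $E$. The goal is to show, under the standing assumption, that $\inf_E f$ is attained at some point of $\overline E$ and then to force $f$ to be constant by the strong minimum principle and Hopf's lemma.

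First I would establish a ``maximum principle up to infinity'' on a parabolic component $E'$: if $h$ is superharmonic on $E'$, bounded below, with $\partial_{\nu_{\partial M}}h=0$ on $\partial_0 E'$, then $\inf_{E'}h\ge\min_{\partial_1 E'}h$. For this, fix a compact exhaustion $\Omega_i\nearrow E'$ and let $g_i$ be harmonic on $\Omega_i\cap E'$ with $g_i=1$ on $\partial_1 E'$, $g_i=0$ on the outer boundary $\partial\Omega_i\cap E'$, and $\partial_{\nu_{\partial M}}g_i=0$ on $\partial_0 E'$; then $g_i$ increases to a limit $g_\infty$, and parabolicity of $E'$ says precisely that $g_\infty\equiv1$. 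Writing $c_1=\min_{\partial_1 E'}h$ and $c_0=\inf_{E'}h$ (finite, with $c_0\le c_1$), the comparison principle on $\Omega_i$ gives $h\ge c_0+(c_1-c_0)g_i$, and letting $i\to\infty$ yields $h\ge c_1$ on $E'$. On a relatively compact component the same inequality is just the ordinary maximum principle, using Hopf's lemma on $\partial_0 E'$.

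Now set $C:=\overline E\cap K'$, which is compact (it lies in $K'$) and contains $\partial_1 E$ together with every inner boundary $\partial_1 E'=\overline{E'}\cap K'$. Since $f>0$ is continuous, $m:=\min_C f>0$ is attained at some $p\in C\subset\overline E$. Applying the previous step with $h=f$ to each component $E'$ of $E\setminus K'$ gives $\inf_{E'}f\ge\min_{\partial_1 E'}f\ge m$, and since $E=(E\cap K')\cup\bigcup_{E'}E'$ with $E\cap K'\subset C$, we get $\inf_E f=m$. If $p\in\partial_1 E$, then $m=f(p)=1$, so $f\ge1$ and hence $f\equiv1$; otherwise $p$ lies in $E$ or on the Neumann part of the boundary, where $f$ is harmonic, so the strong minimum principle (or Hopf's lemma together with $\partial_{\nu_{\partial M}}f=0$) forces $f$ to be constant. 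In either case $f$ is constant, contradicting the choice of $f$. Hence some component $E'$ of $M\setminus K'$ contained in $E$ is non-parabolic, and since a relatively compact component is always parabolic (again by the maximum principle), such an $E'$ is automatically non-compact, as the statement requires.

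I expect the only genuine obstacle to be boundary regularity: one needs the mixed Dirichlet--Neumann problems defining $f$ and the $g_i$ to be solvable with enough regularity for Hopf's lemma to apply along $\partial_0 E'$, which requires controlling the interior corners where $\partial_0$ and $\partial_1$ boundaries meet. This is exactly where the small interior-angle hypothesis on the ends recalled just before Definition 4.4, and the mixed boundary value elliptic estimates developed in \cite{wu2023free}, come in. A minor point: no finiteness of the number of components of $E\setminus K'$ is needed, since the bound $\inf_{E'}f\ge m$ holds uniformly over all of them.
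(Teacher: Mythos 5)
Your contrapositive argument is correct, and it is the standard way to prove statements of this type. Since the present paper only cites this as Lemma~4.10 of \cite{wu2023free} and provides no proof of its own, there is nothing in-text to compare against, but your reasoning is self-contained and sound: the characterization of parabolicity of $E'$ as $g_\infty\equiv 1$ for the exhaustion barriers $g_i$, the mixed Dirichlet--Neumann comparison principle giving $\inf_{E'}f\geq\min_{\partial_1 E'}f$ for the barrier $f$ on $E$, the uniform lower bound $m=\min_{\overline E\cap K'}f>0$ over all (possibly infinitely many) sub-components since every $\partial_1 E'$ lies in the single compact set $C=\overline E\cap K'$, and the strong minimum/Hopf principle forcing $f\equiv\mathrm{const}$ once the infimum is attained in $\overline E$. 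You also correctly flag the one genuine technical point---regularity at the Dirichlet--Neumann corners---and correctly locate where it is handled (the small interior-angle hypothesis recalled before Definition~4.4 and the mixed boundary-value estimates of \cite{wu2023free}). Two small remarks: the case $p\in\partial_1 E$ and the case $p\in E\cup\partial_0 E$ can be merged (if the infimum is attained at a point where $f<1$, Hopf/strong minimum gives constancy; if it is attained only where $f=1$, then $m=1$ and $f\equiv 1$ directly), and the closing sentence about non-compactness of $E'$ is not required by the statement, though it does follow.
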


Using a proof similar to Theorem 4.12 in \cite{wu2023free} we have the following.
\begin{lemm}\label{NPherit}
	If $K \subset K'$ are two compact sets in $M$ and $E'\subset (M \setminus K')$ is a non-parabolic component, then the component $E \subset (M \setminus K)$ that contains $E'$ must be  non-parabolic.
	
	Equivalently, if $K \subset K'$ are two compact sets in $M$, and $P \subset (M \setminus K)$ is parabolic, then each component of $(M \setminus K') \cap P$ must be parabolic.
\end{lemm}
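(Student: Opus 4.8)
The plan is to argue by contrapositive, establishing the second (equivalent) formulation: if $P \subset (M \setminus K)$ is parabolic and $K \subset K'$, then every component of $(M \setminus K') \cap P$ is parabolic. Since $K'$ is compact, $K' \cap P$ is a compact subset of $P$, and $(M \setminus K') \cap P = P \setminus (K' \cap P)$ is the "exterior region" of $P$ outside this compact set; write $Q$ for one of its components. First I would recall the capacity characterization of parabolicity: an end (or component with the prescribed Neumann/Dirichlet boundary structure) is parabolic if and only if it has vanishing capacity, equivalently, there is no positive non-constant bounded harmonic function with Neumann condition on the $\p M$-portion of the boundary and Dirichlet data on the compact inner boundary. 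Concretely, $P$ parabolic means: for the exhaustion $P_j = P \cap B_j$, the harmonic functions $h_j$ on $P_j \setminus (\text{inner boundary})$ with $h_j = 0$ on the inner boundary $\p_1 P$, $h_j = 1$ on $\p B_j \cap P$, and Neumann on $\p_0 P = \p M \cap P$, satisfy $h_j \to 1$ locally uniformly (equivalently their Dirichlet energies tend to $0$).

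The key step is a capacity monotonicity / comparison argument. Suppose for contradiction that $Q$ is non-parabolic. Then by the cited lemma (\cite{wu2023free}, Theorem 4.12 analogue) there is a minimal barrier function $f : Q \to (0,1]$, non-constant, harmonic, with Neumann condition on $\p_0 Q = \p M \cap Q$ and Dirichlet value $1$ on the compact inner boundary $\p_1 Q \subset K' \cap P$, and with finite Dirichlet energy. I would extend $f$ to all of $P$ by setting it equal to $1$ on $P \setminus Q$ (this is Lipschitz across $\p_1 Q$ since $f \equiv 1$ there, hence admissible as a test function / in the relevant Sobolev class), obtaining a function $\tilde f$ on $P$ with finite Dirichlet energy, Neumann condition on $\p_0 P$, bounded between a positive constant and $1$, and non-constant. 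Standard potential theory on a parabolic manifold forces any such $\tilde f$ to be constant: on a parabolic end every bounded function of finite Dirichlet energy that is harmonic (or, more robustly, every bounded superharmonic function bounded below) is constant, and $\tilde f$ is superharmonic on $P$ (harmonic on $Q$, constant $=1 \geq f$ on $P \setminus Q$, so superharmonic across the gluing interface). This contradicts that $f$ — hence $\tilde f$ — is non-constant, so $Q$ must be parabolic.

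The step I expect to be the main obstacle is the gluing/regularity argument: verifying that $\tilde f$, defined piecewise as $f$ on $Q$ and $1$ on $P\setminus Q$, genuinely lies in the right function space and is superharmonic in the weak (distributional) sense across the interface $\p_1 Q$, particularly in the presence of the corner points where $\p_0 Q$ meets $\p_1 Q$ — this is exactly why the small-angle condition ($\leq \pi/8$) on interior corners was imposed in the setup. The cleanest route is probably to avoid gluing altogether and instead run the exhaustion argument directly: let $h^Q_j$ be the capacitary potentials for $Q$ and $h^P_j$ those for $P$; since any admissible competitor for $Q$'s capacity problem extends (by $0$ past $\p_1 Q$, or one compares on the overlap) to an admissible competitor bounding $P$'s capacity from below, one gets $\mathrm{cap}(Q) \leq C\,\mathrm{cap}(P) = 0$ by the variational (energy-minimizing) definition of capacity together with the fact that $\p_1 Q$ sits inside the larger compact region $K' \cap P$. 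I would carry this out following the template of the proof of Theorem 4.12 in \cite{wu2023free}, adapting the monotonicity of capacity under enlarging the "hole" from $K$ to $K'$; the remaining details (finite energy of the barrier, passage to the limit in $j$) are then routine and already appear there.
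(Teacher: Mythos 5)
Your primary argument (extend the minimal barrier $f$ of $Q$ by the constant $1$ on $P\setminus Q$, observe that the glued function $\tilde f$ is weakly superharmonic because $f$ attains its max on $\p_1 Q$ and hence has nonnegative outward normal derivative there, then invoke that a parabolic end admits no non-constant bounded positive superharmonic function with the mixed Neumann/Dirichlet boundary data) is correct in outline, and it genuinely differs from the route the paper gestures at. The paper gives no explicit proof here, only a pointer to the argument of Theorem 4.12 in \cite{wu2023free}; the natural version of that argument runs in the direction of the first formulation of the lemma: build the exhaustion harmonic functions $h_j^E$ on $E\cap B_j$ with $h_j^E=1$ on $\p_1 E$, $h_j^E=0$ on $\p B_j\cap E$, Neumann on $\p_0 E$; compare with the exhaustion $g_j'$ defining the minimal barrier $g'$ of the non-parabolic $E'$ via the mixed-boundary maximum principle ($h_j^E\le 1=g_j'$ on $\p_1 E'$, both vanish on $\p B_j$, Neumann on $\p_0 E'$), so $h_j^E\le g_j'$ on $E'\cap B_j$; in the limit $h_\infty^E\le g'<1$ somewhere, hence $h_\infty^E$ is a non-constant barrier and $E$ is non-parabolic. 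This direct comparison of harmonic functions entirely sidesteps the two issues you flagged — the regularity of the glued function across $\p_1 Q$ and at the corners where $\p_0 Q$ meets $\p_1 Q$, and the need for a superharmonic-Liouville characterization of parabolicity adapted to mixed boundary conditions — which is why it is the cleaner route.

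Two smaller points. First, in your gluing argument you assert $\tilde f$ is ``bounded between a positive constant and $1$''; there is no positive lower bound in general (the minimal barrier may tend to $0$ at infinity), but this does not affect the argument since you only need positivity and the upper bound. Second, the sketch of your proposed alternative is garbled: competitors for the capacity of $Q$ equal $1$ on $\p_1 Q$, so they cannot be extended by $0$ across $\p_1 Q$; and in the variational formulation, admissible competitors give \emph{upper} bounds for capacity, not lower bounds. In fact, the naive monotonicity goes the wrong way — extending a competitor for $Q$ by the constant $1$ across $\p_1 Q$ yields a competitor for $P$, giving $\mathrm{cap}(P)\le\mathrm{cap}(Q)$, which with $\mathrm{cap}(P)=0$ tells you nothing about $Q$. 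The correct ``direct'' argument is not a capacity-monotonicity inequality but the maximum-principle comparison of exhaustion harmonic functions described above (equivalently, $h_j^P\to 1$ by parabolicity of $P$ while $h_j^P\le f_j^Q$ on $Q\cap B_j$, forcing $f^Q\equiv 1$). You should replace the capacity sketch with that comparison if you want the ``cleanest route'' to actually close.
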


\begin{defi}
	We say a Riemannian manifold $M$ is parabolic if there is a point $x\in M$ and a small geodesic ball $B_{r}(x)$ for some $r>0$ such that the connected set $M\setminus B_r(x)$ is parabolic. 
	Otherwise we say $M$ is non-parabolic.
\end{defi}
We note that this definition is independent of the choice of $x\in M$ and $r>0$. Indeed, if the connected set $M \setminus B_{r_1}(x_1)$ is parabolic, then so is the connected set $M\setminus B_{r_2}(x_2)$. Because if $M \setminus B_{r_2}(x_2)$ is non-parabolic, take $R$ large so that $B_{r_1}(x_1) \subset B_{R}(x_2)$, then $M\setminus B_R(x_2)$ must have a non-parabolic component by Lemma \ref{NPinherit}, which then again implies that the set $M\setminus B_{r_2}(x_2)$ is non-parabolic by Lemma \ref{NPherit}.

	We say that $M$ has at most one non-parabolic end, if for any compact set $K \subset M$, there is at most one non-parabolic component of $M \setminus K$. If $M$ is non-parabolic, then $M$ has exactly one non-parabolic end.

We recall Theorem 5.3 in \cite{wu2023free}, that a stable FMBH in a 4-manifold with non-negative ``2-intermediate Ricci curvature'' and ``2-convex'' boundary can only have at most one non-parabolic end if its volume is infinite.
We further prove each component of $\p M$ must be non-compact.

\begin{theo}\label{NcptBound}
	If $(M^3,\p M) \rightarrow (X^4,\p X)$ is a FBMH and $\Ric^X_2\geq 0, \sff^{\p X}_2\geq 0$, then
	\begin{itemize}
		\item either $\Vol(M)<\infty$
		\item or $\Vol(M)=\infty$ and $M$ has at most 1 non-parabolic end.
	\end{itemize}
	If $M$ is non-parabolic and $\Vol(M)=\infty$, then each component of $\p M$ must be non-compact.
\end{theo}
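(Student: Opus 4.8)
The plan is to argue by contradiction. Suppose some component $\Gamma$ of $\p M$ is compact; I would produce a bounded nonconstant harmonic function $v$ on $M$ with finite Dirichlet energy, carrying Dirichlet data on $\Gamma$ and Neumann data on $\p M\setminus\Gamma$, and feed it into the Bochner--stability estimate of Lemma~\ref{BochnerStabNeu} to contradict $\Vol(M)=\infty$.

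\textbf{Construction of $v$.} Exhaust $M$ by compact domains $\Omega_j\supset\Gamma$ and let $v_j$ be harmonic on $\Omega_j$ with $v_j=1$ on $\Gamma$, $v_j=0$ on $\p'\Omega_j\setminus\p M$, and $\p_{\nu_{\p M}}v_j=0$ on $(\p M\cap\Omega_j)\setminus\Gamma$. Because $\Gamma$ is a whole component of $\p M$, the Dirichlet and Neumann parts of $\p\Omega_j$ stay separated, so no corners occur and $v_j$ is smooth up to the boundary. The maximum principle gives $0\le v_j\le1$ and $v_j\nearrow v$, a harmonic function on $M$ with $v|_\Gamma=1$, Neumann on $\p M\setminus\Gamma$, and finite Dirichlet energy (bounded by that of any fixed compactly supported function equal to $1$ near $\Gamma$). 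To see $v$ is \emph{nonconstant} I would use non-parabolicity of $M$: it carries a positive Neumann Green's function $G(p,\cdot)$ for a fixed interior point $p\notin\Gamma$, which is superharmonic off $p$, has $\liminf$ equal to $0$ at infinity, and has finite Dirichlet energy away from $p$; setting $m:=\min_\Gamma G(p,\cdot)>0$, the function $\min(G(p,\cdot)/m,1)$ is superharmonic, equals $1$ on $\Gamma$, and is $<1$ near infinity, so comparison on each $\Omega_j$ yields $v\le\min(G(p,\cdot)/m,1)$ and hence $v<1$ somewhere.

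\textbf{Bochner--stability and conclusion.} Since $\Ric^X_2\ge0$ and $\sff^{\p X}_2\ge0$ --- so $H_{\p X}\ge0$ and $\p M$ is $2$-convex in $M$ --- Lemma~\ref{BochnerStabNeu} together with Remark~\ref{DirNeu} (Dirichlet on $\Gamma$, Neumann on the other components) applies to $v$:
\[
\int_M\phi^2\Big(\tfrac13|\sff|^2|\nabla v|^2+\tfrac12|\nabla|\nabla v||^2\Big)\le\int_M|\nabla\phi|^2\,|\nabla v|^2
\]
for every compactly supported $\phi$. Choosing $\phi=\phi_R$ equal to $1$ on $B_R(p)$, $0$ outside $B_{2R}(p)$, with $|\nabla\phi_R|\le C/R$, the right side is $\le(C/R)^2\int_M|\nabla v|^2\to0$ as $R\to\infty$ because $v$ has finite energy. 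Letting $R\to\infty$ and using monotone convergence on the left gives $|\nabla|\nabla v||\equiv0$, so $|\nabla v|\equiv c$ is constant on the connected manifold $M$. As $v$ is nonconstant, $c>0$, whence $\int_M|\nabla v|^2=c^2\Vol(M)=\infty$, contradicting finiteness of the Dirichlet energy. Hence $\p M$ has no compact component.

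\textbf{Main obstacle.} The delicate step is the construction of $v$: extracting a \emph{nonconstant} finite-energy harmonic function from the hypothesis that $M$ is non-parabolic. This rests on the existence and asymptotics (decay at infinity, finite energy off the pole) of the Neumann Green's function on $M$, and on matching the ``ends'' formulation of (non-)parabolicity used in the paper with this potential-theoretic input (cf.\ \cite{li2004lectures}, \cite{wu2023free}); one should also confirm that the equilibrium potential $v$ is genuinely harmonic across all of $M$, so that the inequality of \cite{wu2023free} applies to it verbatim. With that in hand, the remaining steps are a direct adaptation of the Cao--Shen--Zhu / \cite{chodosh2024complete} argument already used for the first dichotomy of the theorem.
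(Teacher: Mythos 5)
Your argument is essentially the same as the paper's: minimize Dirichlet energy on an exhaustion with Dirichlet data $1$ on the compact component, Dirichlet $0$ on the outer sphere, and Neumann elsewhere on $\p M$; show the limit is nonconstant by comparison with a barrier furnished by non-parabolicity; then conclude via Lemma~\ref{BochnerStabNeu} and a linear cut-off. The only substantive difference is cosmetic --- you use the Neumann Green's function as the comparison barrier, while the paper compares against the minimal barrier function $g_\infty$ on the non-parabolic end (Lemma~4.12 of \cite{wu2023free}), and one small imprecision: you claim ``no corners occur,'' but $\p'\Omega_j\setminus\p M$ still meets the Neumann portion of $\p M$ in corners, which the paper handles by prescribing small interior angles at those junctions.
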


\begin{proof}
	The proof that $M$ with infinite volume must have at most 1 non-parabolic end is the same as Theorem 5.3 in \cite{wu2023free}. We briefly summarize it here.
	
	Given any compact set $K\subset M$, if $E_i(i\in\{1,2\})$ is non-parabolic in $M\setminus K$, then on each $E_i$ there is a minimal barrier function $f_i$ with finite Dirichlet energy. Then using these two barrier functions one can construct a non-constant harmonic function on $M$ with finite Dirichlet energy and Neumann boundary condition on $\p M$.
	Using a linear cut-off function for $\phi$ in Lemma \ref{BochnerStabNeu}, we can show that this implies $\Vol(M)$ must be finite.
	
	We now show that if $M$ is non-parabolic, then either $M$ has finite volume, or each component of $\p M$ must be non-compact. 
	
	Assume $\p_1 M$ is a compact component of $\p M$ and denote $\p_0 M:=\p M \setminus \p_1 M$ (this could be the empty set).
	
	We minimize Dirichlet energy on $B_{R}(x)\supset \p_1 M$ for some $x\in M$ and $R>0$, among functions $f$ such that,
	\begin{equation*}
		f \rvert_{\p_1 M}=1,  f\rvert_{\p' B_R(x)}=0, \nabla_{\nu_{\p M}} f\rvert_{\p_0 M}=0.
	\end{equation*}
	
	As $R\rightarrow \infty$, the minimizer $f_R$ converges in $C^{2}_{\text{loc}}(M)$ to a  harmonic function $f_{\infty}$, and $(f_{\infty})\rvert_{\p_1 M}=1, \nabla_{\nu_{\p M}}f_{\infty} \rvert_{\p_0 M}=0$. 
	
	Let $g_{\infty}$ be the minimal barrier function on the non-parabolic end $E\subset (M \setminus B_r(x))$ for a fixed $r>0$ so that $\p_1M \subset B_r(x)$. Then $f_{R} \rvert_{E\cap B_R(x)} < g_{\infty} \rvert_{E\cap B_R(x)}$, passing to the limit we get $f_{\infty}\rvert_E \leq g_{\infty}\rvert_E$, which implies $f_{\infty}$ is non-constant.
	
	The minimizing solution $f_{R}$ has decreasing Dirichlet energy as $R\rightarrow \infty$. This implies that $f_{\infty}$ is a non-constant harmonic function with finite energy and Dirichlet boundary condition on $\p_1 M$ and Neumann boundary condition on $\p_0 M$. By Remark \ref{DirNeu} and Lemma \ref{BochnerStabNeu} we get that $\Vol(M)$ must be finite, a contradiction.
\end{proof}
We now prove each component of the boundary $\p M$ must has an end in the non-parabolic end of $M$.

\begin{lemm}\label{boundInNP}
	Consider $(M^3,\p M)\hookrightarrow (X^4,\p X)$ a FBMH and $\Ric^X_2\geq 0, \sff^{\p X}_2 \geq 0$.
	Assume $(M,\p M)$ is non-parabolic, $(C_k)_{k\in \NN}$ is a sequence of compact sets and $E_k \subset (M \setminus C_k)$ is a sequence of nested non-parabolic components,  if $\Vol(M)=\infty$ then any connected component $\Gamma$ of $\p M$, must have $(\Gamma\setminus C_k) \cap E_k \neq \emptyset$ for any $k\in \NN$.
\end{lemm}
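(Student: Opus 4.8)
The plan is to argue by contradiction: suppose some connected component $\Gamma$ of $\p M$ satisfies $(\Gamma \setminus C_k) \cap E_k = \emptyset$ for some $k$. Since the $E_k$ are nested and the $C_k$ exhaust $M$, this means that $\Gamma$ is eventually confined to the parabolic part of $M$ — more precisely, $\Gamma$ has no end entering the (unique, by Theorem \ref{NcptBound}) non-parabolic end $E$. The goal is to produce a non-constant harmonic function on $M$ with finite Dirichlet energy, satisfying Neumann conditions on $\p M$, which by Lemma \ref{BochnerStabNeu} (Neumann case, using $\sff^{\p X}_2 \geq 0$) forces $\Vol(M) < \infty$, contradicting the hypothesis.

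First I would separate $\p M$ as $\Gamma \sqcup (\p M \setminus \Gamma)$ and, using that $\Gamma$ avoids the non-parabolic end, show that for large $k$ the component $\Gamma$ is contained in a parabolic region $P \subset M \setminus C_k$ disjoint from $E_k$. The key input is that $\Gamma$, being a boundary component avoiding $E$, lives in parabolic ends of $M$; by the exhaustion property and Lemma \ref{NPherit}, the connected piece of $M \setminus C_k$ containing (the tail of) $\Gamma$ is parabolic for $k$ large. Next, on the non-parabolic end $E = E_k$ take the minimal barrier function $g_\infty : E \to (0,1]$ with Neumann condition on $\p_0 E = \p M \cap E$ and Dirichlet value $1$ on $\p_1 E$, which has finite Dirichlet energy. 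I would then extend $g_\infty$ to all of $M$ by solving, on exhausting balls $B_R(x)$, the mixed boundary value problem with data $g_\infty$ on $\p_1 E$, $0$ on $\p' B_R(x)$ (or more naturally by capping off as in the proof of Theorem \ref{NcptBound}), and Neumann on $\p M \cap B_R(x)$; the minimizers $f_R$ have nonincreasing Dirichlet energy and converge in $C^2_{\mathrm{loc}}$ to a harmonic $f_\infty$ on $M$ with Neumann condition on all of $\p M$. Comparing $f_R \leq g_\infty$ on $E \cap B_R(x)$ and passing to the limit shows $f_\infty \leq g_\infty < 1$ somewhere on $E$, so $f_\infty$ is non-constant, while the energy bound gives finite Dirichlet energy. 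Then Lemma \ref{BochnerStabNeu} with a linear cutoff $\phi$ (as in Theorem \ref{NcptBound}) yields $\Vol(M) < \infty$, the desired contradiction.

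The main obstacle I expect is the gluing/extension step: one must produce the global harmonic function $f_\infty$ with a \emph{pure Neumann} condition on $\p M$ (including on $\Gamma$) while still guaranteeing it is non-constant and of finite energy. The point is that since $\Gamma$ lies in a parabolic region, imposing Neumann data there costs nothing — the harmonic extension across a parabolic end with zero Neumann flux is forced to be essentially constant with negligible energy — so the non-constancy is driven entirely by the barrier $g_\infty$ on the non-parabolic end $E$. Making this precise requires the parabolicity estimates (a parabolic end admits cutoffs $\phi_j \to 1$ with $\int |\nabla \phi_j|^2 \to 0$) together with the Neumann-boundary parabolicity theory already set up in \cite{wu2023free}, exactly as invoked in Theorem \ref{NcptBound} and Lemma \ref{BochnerStabNeu}. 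Once the extension is in hand the contradiction is immediate, so the lemma follows.
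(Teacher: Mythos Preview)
Your contradiction strategy is right, but the construction of the harmonic function has a genuine gap. You aim for a function with \emph{pure Neumann} data on all of $\partial M$, produced from the single barrier $g_\infty$ on the one non-parabolic end $E$. But a single non-parabolic end does not by itself yield a non-constant finite-energy Neumann harmonic function on $M$: the barrier $g_\infty$ lives only on $E$ and satisfies $g_\infty=1$ on the \emph{interior} hypersurface $\partial_1 E$, so there is no boundary value problem on $B_R(x)$ whose limit retains an anchor forcing the value $1$ somewhere. Your sketch (``data $g_\infty$ on $\partial_1 E$, $0$ on $\partial' B_R(x)$, Neumann on $\partial M$'') is not a well-posed BVP once $R$ is large enough that $\partial_1 E$ is interior, and the variant with data $1$ on the parabolic pieces of $\partial' B_R(x)$ and $0$ on the $E$-piece leaves no anchor in the limit. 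More tellingly, your construction never uses the hypothesis on $\Gamma$: every boundary component receives the same Neumann condition, so if the limit were non-constant it would be non-constant regardless of whether $\Gamma$ meets $E_k$, and you would have proved that any non-parabolic stable FBMH has finite volume, which is false.

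The paper repairs exactly this point by imposing a \emph{Dirichlet} condition $f\equiv 1$ on $\Gamma$ and Neumann on $\partial M\setminus \Gamma$. One minimizes energy on $C_k$ with $f=1$ on $\Gamma\cap C_k$, Neumann on $(\partial M\setminus\Gamma)\cap C_k$, $f=0$ on $\partial' C_k\cap E_k$, and $f=1$ on $\partial' C_k\setminus E_k$. The Dirichlet condition on $\Gamma$ survives in the limit and anchors $f_\infty\rvert_\Gamma=1$; the assumption $(\Gamma\setminus C_{k_0})\cap E_{k_0}=\emptyset$ is precisely what makes the maximum-principle comparison $f_k\leq g_\infty$ valid on $E_{k_0}\cap C_k$ (since $\Gamma$ does not enter $E_{k_0}$), forcing $f_\infty\leq g_\infty<1$ there. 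One then invokes not the pure Neumann case of Lemma \ref{BochnerStabNeu} but Remark \ref{DirNeu}, which permits mixed Dirichlet/Neumann conditions on different boundary components (the Dirichlet side needs only $H_{\partial X}\geq 0$, implied by $\sff_2^{\partial X}\geq 0$). That mixed-condition trick is the missing idea.
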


\begin{proof}
	Assume $\Gamma$ is a component of $\p M$ and there is $k_0\in \NN$ such that $(\Gamma\setminus C_{k_0})\cap E_{k_0}=\emptyset$, then for any $k\geq k_0$, $(\Gamma \setminus C_k)\cap E_k =\emptyset$. Note that $\Gamma\setminus C_k\neq \emptyset$ since $\Gamma$ is non-compact by  Lemma \ref{NcptBound}.
	
	We now minimize Dirichlet energy on $C_k (k\geq k_0)$ among functions satisfying the following,
	\begin{equation*}
		f\rvert_{\Gamma\cap C_k}=1, \nabla_{\nu_{\p M}}f\rvert_{C_k\cap(\p M \setminus \Gamma)}=0, f\rvert_{\p' C_k\cap E_k }=0, f\rvert_{\p'C_k \setminus E_k}=1.
	\end{equation*}
	Denote the minimizer as $f_k: C_k \rightarrow [0,1]$, and the minimal barrier function on $E_{k_0}$ as $f_0$.
	Then using extension by constants, $\int_{C_k}|\nabla f_k|^2$ is non-increasing. By maximum principle, $f_k\rvert_{E_{k_0} \cap C_k}\leq f_0 \rvert_{E_{k_0}\cap C_k}$, and passing to the limit we have $f_k$ converges in $C^{2}_{\text{loc}}(M)$ to a harmonic function $f_{\infty}: M \rightarrow (0,1]$ such that $f_{\infty}\rvert_{\Gamma}=1, \nabla_{\nu_{\p M}}f_{\infty}\rvert_{\p M \setminus \Gamma}=0$, and $f_{\infty}$ is non-constant, because $f_{\infty}\rvert_{E_{k_0}} \leq f_0\rvert_{E_{k_0}}$.
	
	In total we get a non-constant harmonic function with finite Dirichlet energy, using Remark \ref{DirNeu} we get that the volume of $M$ must be finite.
\end{proof}

\section{Proof of almost linear volume growth and Rigidity}\label{secRigidity}

We can now start to prove Theorem \ref{rigidity}. We can first pass to the universal cover of $M$, in this section, we assume that the FBMH $M$ is simply connected. We assume all the assumptions of Theorem \ref{rigidity}.  By scaling we may assume that $H_{\p X}\geq 2$.

We start with a lemma that shows that $\p M$
 must be connected if $M$ is simply connected. 
 
Recall we use the notation $\p M$ to denote the boundary of a continuous manifold $(M,\p M)$, and if $\Omega$ is an open subset of $M$, we denote the topological boundary as $\p'\Omega$. 
 \begin{lemm}\label{connBound}
 	Assume $(M,\p M)$ as in Theorem \ref{rigidity} is simply connected, each component of $\p M$ is non-compact (by Lemma \ref{NcptBound}), and each component of $\p M$ must have an end in the end $(E_k)_{k\in \NN}$ in the sense of Lemma \ref{boundInNP} (we do not assume $(E_k)$ is non-parabolic), then $\p M$ is connected. 
 \end{lemm}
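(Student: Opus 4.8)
The plan is to argue by contradiction. Suppose $\partial M$ has two distinct components $\Gamma_1$ and $\Gamma_2$. By hypothesis every component of $\partial M$ is noncompact and meets $E_k$ for every $k$, and we are in the situation $\Vol(M)=\infty$ with $M$ nonparabolic. Out of this data I will manufacture a bounded harmonic function $u$ on $M$ with $u\equiv 1$ on $\Gamma_1$, $u\equiv 0$ on $\Gamma_2$ and $\nabla_{\nu_{\partial M}}u\equiv 0$ on the remaining components of $\partial M$, having finite Dirichlet energy; since $\sff^{\partial X}_2\ge 0$ and $\Ric^X_2\ge 0$, Remark \ref{DirNeu} together with Lemma \ref{BochnerStabNeu} will then force $\Vol(M)<\infty$, contradicting $\Vol(M)=\infty$.

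To set things up I would use that $M$ is simply connected. Pushing each component $\Gamma_j$ of $\partial M$ slightly into the interior gives a properly embedded two-sided hypersurface $\Gamma_j'\subset\mathrm{int}\,M$, together with a thin collar $A_j\cong\Gamma_j\times[0,\varepsilon)$ between $\partial M$ and $\Gamma_j'$, and the $A_j$ are pairwise disjoint. Each $\Gamma_j'$ separates $M$ (the collar $A_j$ is a connected component of $M\setminus\Gamma_j'$), and since $H_1(M;\ZZ_2)=0$ one controls the topology of the complement: writing $M\setminus\Gamma_1'=A_1\sqcup B_1$ one has $\Gamma_2\subset B_1$, and removing all the collars leaves $V:=M\setminus\bigcup_j A_j$ connected with $\partial V=\bigsqcup_j\Gamma_j'$. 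This bookkeeping of the two ``sides'' of each boundary component is what will make the energy estimate below close, despite the fact that $\Gamma_1$ and $\Gamma_2$ both run out to infinity inside the single region $E_k$.

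Next, following the exhaustion scheme used in the proofs of Lemma \ref{NcptBound} and Lemma \ref{boundInNP}, fix an exhaustion $C_1\subset C_2\subset\cdots$ of $M$ and on each $C_k$ minimize $\int|\nabla u|^2$ over functions $u\colon C_k\to[0,1]$ with $u\equiv 1$ on $\Gamma_1\cap C_k$, $u\equiv 0$ on $\Gamma_2\cap C_k$, Neumann on $(\partial M\setminus(\Gamma_1\cup\Gamma_2))\cap C_k$, and with boundary values on $\partial' C_k$ prescribed by the decomposition $M=\overline{A_1}\cup\overline{A_2}\cup\overline V$ (namely $1$ on $\partial'C_k\cap\overline{A_1}$, $0$ on $\partial'C_k\cap\overline{A_2}$, and Neumann on $\partial'C_k\cap V$). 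Because these data are locally constant near $\partial'C_k$ along $\partial M$ and along the collars, the minimizer $u_k$ on $C_k$ extends to $C_{k+1}$ by locally constant functions at no extra energy, so $\int_{C_k}|\nabla u_k|^2$ is nonincreasing in $k$; interior and boundary elliptic estimates then give $u_k\to u$ in $C^2_{\mathrm{loc}}$ with $u$ harmonic, $0\le u\le 1$, satisfying the asserted boundary conditions and $\int_M|\nabla u|^2<\infty$. Note $u$ is automatically nonconstant, since it takes the value $1$ on $\Gamma_1$ and $0$ on $\Gamma_2$.

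Finally, I would invoke Remark \ref{DirNeu} to apply Lemma \ref{BochnerStabNeu} to $u$ with mixed Dirichlet/Neumann data on the various components of $\partial M$, insert the linear cut-offs $\phi_R$ ($\phi_R\equiv 1$ on $B_R(p)$, $\supp\phi_R\subset B_{2R}(p)$, $|\nabla\phi_R|\le R^{-1}$), and let $R\to\infty$ using $\int_M|\nabla u|^2<\infty$: this forces $|\sff_M|^2|\nabla u|^2\equiv 0$ and $|\nabla u|$ constant, hence $|\nabla u|\equiv c>0$ since $u$ is nonconstant, and therefore $\Vol(M)=c^{-2}\int_M|\nabla u|^2<\infty$, the desired contradiction. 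So $\partial M$ is connected. The step I expect to require the most care is the construction of the minimizing sequence with uniformly bounded Dirichlet energy: one must choose $C_k$ and the boundary values on $\partial'C_k$ so that they are simultaneously compatible with the Dirichlet conditions on the two noncompact components $\Gamma_1,\Gamma_2$ (which both reach the single end $E_k$) and extendable by constants, and it is precisely here that the separation statements coming from simple connectedness of $M$ are used; the remaining steps are routine given Lemmas \ref{NcptBound}, \ref{boundInNP} and \ref{BochnerStabNeu}.
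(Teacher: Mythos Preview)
Your approach is analytic (manufacture a finite-energy harmonic function with Dirichlet data $1$ on $\Gamma_1$, $0$ on $\Gamma_2$, then invoke Lemma~\ref{BochnerStabNeu}), whereas the paper's proof is purely topological: it runs Corollary~\ref{PutInBubble} on $\partial M$ inside each annulus $C_{k+1}\setminus C_k$ to produce a warped $\theta$-bubble whose components are \emph{disks}, and then observes that a loop built from paths in $\partial_1 M$, $\partial_2 M$, $\partial' E_{k+1}$ and $C_{k_0}$ would have nontrivial $\ZZ_2$-intersection with one of those disks, contradicting $\pi_1(M)=0$. In particular the paper makes essential use of $H_{\partial X}\geq H_0>0$ (to get the $\theta$-bubble) and of the disk topology coming from Lemma~\ref{Ind3d}; it never constructs a global harmonic function for this step.

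The gap in your argument is exactly the step you flagged. Your monotonicity claim ``$u_k$ extends to $C_{k+1}$ by locally constant functions at no extra energy'' fails on the region $V\cap(C_{k+1}\setminus C_k)$: on $\partial'C_k\cap V$ the minimizer $u_k$ satisfies only a Neumann condition and is \emph{not} locally constant there, so any $H^1$ extension into $V\cap(C_{k+1}\setminus C_k)$ must interpolate between the (unknown) trace of $u_k$ on $\partial'C_k\cap V$ and the values $1$ on $\Gamma_1'$, $0$ on $\Gamma_2'$, and carries uncontrolled energy. This is not a technicality one can finesse by reshuffling the boundary conditions: the obstruction is that with both noncompact components $\Gamma_1,\Gamma_2$ running out into the \emph{same} end $E_k$, there is no evident test function in $H^1(M)$ with $u=1$ on $\Gamma_1$ and $u=0$ on $\Gamma_2$ at all. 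The constructions in Theorem~\ref{NcptBound} and Lemma~\ref{boundInNP} work precisely because the Dirichlet component there is either compact or eventually disjoint from $E_k$, which allows the zero-energy constant extension; here neither hypothesis is available. Absent an independent bound on the geometry of the end (such as the linear volume growth the paper only proves \emph{after} Lemma~\ref{connBound}), the finite-energy harmonic function you need may simply not exist, and the argument does not close.
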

 
 \begin{proof}
 Now let $\p_1 M, \p_2 M$ be two non-compact components. We may take the compact set $C_k:=B_{3k\pi}(x)$ for some $x\in M$, and $E_k$ is a nested sequence of sets in $M\setminus C_k$. Then using Corollary \ref{PutInBubble} for $H_0=2, d_0=\pi, a_0=1$, we can find a $\theta$-bubble $\Sigma_k=\p'\Omega_k$ inside $C_{k+1}\setminus C_k$, with each component $\Sigma^{\alpha}_k$ of $\Sigma_k$  having $|\p \Sigma^{\alpha}_k|\leq 2\pi$ and $d_{\Sigma_k}(z,\p \Sigma^{\alpha}_k)\leq 2$ for  $z\in \Sigma^{\alpha}_k$.
 
 By simply connectedness of $M$, $\p' E_k$ must be connected for any $k\in \NN$ and $(E_k \setminus E_{k+1}) \cap C_{k+1}=: M_k$ must also be connected (see \cite{chodosh2024complete}, \cite{wu2023free},\cite{wu2024capillary}).
 
 Now take a point $p_1\in \p_1 M$ and $p_2\in \p_2 M$, for $k$ larger than some $k_0$ we may assume that $p_1, p_2 \in C_k$. Since both $\p_1 M$ and $\p_2 M$ are non-compact and has an end in $(E_k)_{k\geq k_0}$, we must have $\p' E_k \cap \p_{i} M\neq \emptyset$ for $k\geq k_0$ and $i\in \{1,2\}$. That is, any path in  $\p_i M$ connecting $p_i$ to some point $q_i\in E_k \cap \p_i M$ must intersect $ \p' E_k \cap \p_i M$.  If $q_i \in \p' E_{k+1}$, then a path $\overline{p_i q_i}$ in $\p_i M$ must intersect $\p \Sigma_k$. Since each component of  $\Sigma_k$ is a disk, we must have that $\overline{p_1q_1}$ intersect $\p \Sigma^{\alpha}_k$, and $\overline{p_2q_2}$ intersect $\p \Sigma^{\beta}_k$ for $\alpha \neq \beta$. We can now connect $q_1, q_2$ with a path in $\p' E_{k+1}$, and connect $p_1, p_2$  with a path in $C_{k_0}$. This gives us a loop in $M$ with non-trivial intersection to a disk, contradicting simply connectedness. 

 In total, we have shown that at most one component of $\p M$ can have an end in $(E_k)_{k\in \NN}$, so $\p M$ is connected.
 \end{proof}

We now assume that $(E_k)_{k\in \NN}$ is the non-parabolic end of $M$ with respect to $C_k:=B_{3k\pi}(x)$ for some $x\in \p M$. If $M$ is parabolic then $E_k=\emptyset$. Assume $\p M$ is connected. We denote $P_k:= (M \setminus C_k) \setminus E_k$, and each component of $P_k$ is parabolic. Denote $M_k:=(E_{k}\setminus E_{k+1})\cap C_{k+1}$.
We now have the following decomposition of $M$,
\begin{align*}
	M=B_{3\pi}(x) \cup E_1 \cup P_1&=B_{3\pi}(x)\cup P_1\cup (M_q\cup P_2 \cup E_2)\\
	&=B_{3\pi}(x)\cup (\cup_{k=1}^m P_k) \cup (\cup_{k=1}^m M_k) \cup E_{k+1}
\end{align*}

\begin{proof}[Proof of Theorem \ref{rigidity}]
	Using Corollary \ref{PutInBubble} we can find a $\theta$-bubble $\Sigma_k=\p'\Omega_k$ inside $C_{k+1}\setminus C_k$ with $|\p \Sigma^{\alpha}_k | \leq 2\pi, d_{\Sigma_k}(z,\p\Sigma^{\alpha}_k) \leq 2$ for any component $\Sigma^{\alpha}_k$ of $\Sigma_k$ and $z\in \Sigma^{\alpha}_k$.  In particular, we have the diameter of each $\Sigma^{\alpha}_k$ is at most $\pi +4$.
	
	\textit{Claim:} there is a unique component $\Sigma^{\alpha}_k$ with $\p \Sigma^{\alpha}_k \neq \emptyset$ separating $\p' E_k$ and $\p' E_{k+1}$, that is, any path from $\p' E_k$ to $\p' E_{k+1}$ must intersect $\Sigma^{\alpha}_k$.
	
	Take $q_{k+1} \in \p' E_{k+1} \cap \p M$, connect $x\in \p M$ and $q_{k+1}$ with a path in $\p M$, then $\overline{x q_{k+1}}$ must intersect $\p'E_{k}\cap \p M$ at some point $q_k$, and must intersect some component $\Sigma^{\alpha}_k$ with $\p \Sigma^{\alpha}_k \neq \emptyset$. Now if there is another path $\gamma$ from $\p'E_{k}$ to $\p' E_{k+1}$ that is disjoint from $\Sigma^{\alpha}_k$, as in Lemma \ref{connBound}, since $\p'E_k$ is connected for all $k$,  we can find a loop in $M$ having non-trivial intersection with the disk $\Sigma^{\alpha}_k$, a contradiction to simply connectedness. We finished the proof of the above claim.

	\textit{Claim:} $\sup_k \text{diam}_{M}(M_k) \leq C$ for some constant $C>0$.
	
	Take $y, z\in M_{k+1}$ and take the geodesic line $\overline{xy}, \overline{xz}$. Now use  $\overline{xy}\cap \p E_{k} \neq \emptyset$ and $\overline{xy}\cap \p E_{k+1} \neq \emptyset$ together with the first claim, we get that $\overline{xy} \cap \Sigma^{\alpha}_{k}\neq \emptyset$. Let $y_k\in \overline{xy}\cap \Sigma^{\alpha}_k$. Similarly, we get $\overline{xz} \cap \Sigma^{\alpha}_k \neq \emptyset$ and $z_k\in \overline{xz}\cap \Sigma^{\alpha}_k$. Now we have,
	\begin{equation*}
		|\overline{yz}|\leq |\overline{yy_k}|+|\overline{y_kz_k}|+|\overline{z_kz}| \leq 6\pi+(\pi+4)+6\pi=C.
	\end{equation*}
	Using Lemma \ref{sffBound}  and Lemma \ref{volWBG}, we get that there is a constant $C_0>0$ such that $\sup_k \Vol(M_k)\leq C_0$.
	
	We now use a suitable cut-off function to show the non-parabolic end of $M$ has linear volume growth.
	
	Let $\varphi_k$ be a cut-off function such that $\varphi_k \rvert_{B_{3k\pi}}(x)=1, \varphi_k \rvert_{(M \setminus B_{6k\pi}(x))}=0$ and with $|\nabla\varphi_k| \leq \frac{2}{3k\pi}$ everywhere and $\varphi_k \rvert_{P_m}$ is constant for $k\leq m\leq 2k$.
	
	On the parabolic ends $P_m$ we use the existence of harmonic functions converging to $1$ everywhere, with Dirichlet energy converging to $0$ (see \cite{li2004lectures}, \cite{chodosh2024complete}, \cite{wu2023free}). That is, let $u^l_m$ be harmonic functions on $P_m$, with $u^l_m \rvert_{\p' P_m}=1$, $u^l_m \rvert_{P_m} \rightarrow 1$ as $l \rightarrow \infty$, and $\int_{P_m}|\nabla u^l_m|^2 \rightarrow 0$. We may further assume that, by choosing $l$ large (depending on $k,m$), we have $\int_{P_m}|\nabla u^l_m|^2 \leq \frac{1}{k}2^{-m}$.
	
	We define $v^l_k: M \rightarrow[0,1]$ such that $v^k_l \rvert_{P_m}=u^l_m$ for $m \leq 2k$, and $v^l_k$ is constant otherwise.
	
	Let $\phi=\varphi_k v_k^l$, then recall the stability inequality of $M$,
	\begin{align*}
		&\int_{M}(\Ric_X(\nu_M,\nu_M)+|\sff_M|^2) \varphi_k^2 (v_k^l)^2+\int_{\p M} \sff_{\p X}(\nu_{M},\nu_{M})\varphi_k^2 (v_k^l)^2\\
		&\leq 2\int_{M}|\nabla \varphi_k |^2 (v_k^l)^2+2|\nabla v_k^2|^2 \varphi_k^2\\
		&\leq  2\sum_{m=k}^{2k} (\frac{2}{3k\pi})^2 \int_{M_m} (v_k^l)^2+2\sum_{m=1}^{2k}\int_{P_m} |\nabla v_k^l|^2 \\
		&\leq \sum_{m=k}^{2k} \frac{10}{k^2} \Vol(M_m)+2 \sum_{m=1}^{2k} \frac{1}{k}2^{-m}\\
		&\leq (10 C_0 + 4)\frac{1}{k} \rightarrow 0 \quad \text{as } k \rightarrow 0.
	\end{align*}
	Since $\varphi_k v_k^l \rightarrow 1$ everywhere, and all terms on the left hand side of the inequality is non-negative by assumption, we obtain the desired rigidity results.
\end{proof}

\bibliographystyle{alpha}
\bibliography{REF.bib}

\end{document}